\numberwithin{equation}{section}
\newtheorem{theorem}{Theorem}[section]
\newtheorem*{mthm}{Main Theorem}
\newtheorem{corollary}[theorem]{Corollary}
\newtheorem{lemma}[theorem]{Lemma}
\newtheorem{proposition}[theorem]{Proposition}
\newtheorem*{example}{Example}
\newtheorem*{conji}{Conjecture A}
\newtheorem*{conjii}{Conjecture B}
\newcommand{\mathd}{\mathrm{d}}
\newcommand{\mathe}{\mathrm{e}}
\newcommand{\tmdummy}{$\mbox{}$}
\newcommand{\tmop}[1]{\ensuremath{\operatorname{#1}}}
\newcommand{\tmscript}[1]{\text{\scriptsize{$#1$}}}
\newenvironment{enumerateroman}{\begin{enumerate}[\textup{(}i\textup{)}] }{\end{enumerate}}
\newcommand{\dt}{\ensuremath{\frac{\partial}{\partial t}}}
\newcommand{\normho}{\ensuremath{| \mathring{h} |}}
\begin{document}

\title[An Optimal Convergence Theorem for Mean Curvature Flow]
{An Optimal Convergence Theorem for Mean Curvature Flow of Arbitrary
Codimension in Hyperbolic Spaces}

\author{Li Lei and Hongwei Xu}
\address{Center of Mathematical Sciences \\ Zhejiang University \\ Hangzhou 310027 \\ China\\}
\email{lei-li@zju.edu.cn; xuhw@cms.zju.edu.cn}
\date{}
\keywords{Mean curvature flow, submanifolds, convergence theorem,
differentiable sphere theorem, second fundamental form.}
\subjclass[2010]{53C44; 53C40; 53C20}
\thanks{Research supported by the National Natural Science Foundation of China, Grant No. 11371315.}

\begin{abstract}
  In this paper, we prove that if the initial submanifold $M_0$ of
dimension $n(\ge6)$ satisfies an optimal pinching condition, then
the mean curvature flow
  of arbitrary codimension in hyperbolic spaces converges to a round
  point in finite time. In particular, we obtain the optimal
differentiable sphere theorem for submanifolds in hyperbolic spaces.
It should be emphasized that our pinching condition implies that the
Ricci curvature of the initial submanifold is positive, but does not
imply positivity of the sectional curvature of $M_0$.
\end{abstract}

{\maketitle}

\section{Introduction}

The investigation of curvature and topology of manifolds is one of
the main stream in global differential geometry. The sphere theorem
for compact manifolds was initiated by Rauch in 1951. During the
past six decades, there are many important progresses on sphere
theorems for Riemannian manifolds and submanifolds
\cite{Berger,Brendle2,MR2449060,GS,Hamilton,Shiohama}. As is known,
the theory of curvature flows become more and more important in the
geometry and topology of manifolds
\cite{MR2739807,BW,Brendle1,Brendle2,MR2449060,BS2,MR3005061,Hamilton,Huisken2,MR2483374,XG2,Zhu},
etc. In \cite{MR2449060}, Brendle and Schoen proved the remarkable
differentiable 1/4-pinching sphere theorem via the Ricci flow, which
had been open for half a century. Since the dimension of a complex
projective space is always even, Brendle and Schoen's differentiable
sphere theorem is optimal for even dimensional cases. In \cite{BS2},
Brendle and Schoen obtained a differentiable rigidity theorem for
compact manifolds with weakly 1/4-pinched curvatures in the
pointwise sense.

Let $M^{n}$ be an $n(\ge2)$-dimensional submanifold in
an $(n+q)$-dimensional simply connected space form $\mathbb{F}^{n+q}
( c )$ with constant curvature $c$. Denote by $H$ and $h$ the mean
curvature vector and the second fundamental form of $M$,
respectively. Set
\begin{equation}
\alpha ( n, |H| ,c ) =n c+ \frac{n}{2 ( n-1 )} |H|^{2} -
\frac{n-2}{2
   ( n-1 )} \sqrt{|H|^{4} +4 ( n-1 ) c |H|^{2}}.
\end{equation}
After the pioneering rigidity theorem for closed minimal
submanifolds in a sphere due to Simons \cite{Simons}, Lawson
\cite{MR0238229} and Chern-do Carmo-Kobayashi \cite{CDK} obtained a
classification of $n$-dimensional closed minimal submanifolds in
$\mathbb{S}^{n+q}$ whose squared norm of the second fundamental form
satisfies $| h |^{2}\le n/(2-1/q)$. Later Li-Li \cite{MR1161925}
improved Simons' pinching constant for $n$-dimensional closed
minimal submanifolds in $\mathbb{S}^{n+q}$ to
$\mathrm{max}\{\frac{n}{2-1/q},\frac{2}{3}n\}$. Putting $\alpha_1 (
n, | H | ) =\alpha ( n, |H| ,1),$ the second author
\cite{Xu1,MR1241055} proved the generalized Simons-Lawson-Chern-do
Carmo-Kobayashi theorem for compact submanifolds with parallel mean
curvature in $\mathbb{S}^{n+q}$ whose squared norm of the second
fundamental form satisfies $| h |^{2}\le C(n,q,|H|).$ Here
$$C(n,q,|H|)=\left\{\begin{array}{llll} \alpha_1(n,|H|),&\mbox{\ for\ } q=1, \mbox{\ or\ } q=2 \mbox{\ and\ }
 H\neq0,\\
\min\Big\{\alpha_1(n,|H|),\frac{2n}{3}+\frac{5|H|^2}{3n}\Big\},&\mbox{\
otherwise.\ }
\end{array} \right.$$

In the case where $c<0$, the second author {\cite{Xu1,xuhwrigidity}}
proved the following optimal rigidity theorem for submanifolds with
parallel mean curvature in hyperbolic spaces.
\begin{theorem}
  Let $M$ be an n-dimensional ($n \ge 3$) complete submanifold with
  parallel mean curvature in the hyperbolic space $\mathbb{H}^{n+q} ( c )$.
  If $\sup_{M} ( | h |^{2} - \alpha ( n, | H | ,c ) ) <0$, where $| H |^2+n^2c>0$,
  then $M$ is the totally umbilical sphere $\mathbb{S}^{n} ( n/
  \sqrt{| H |^2+n^2c} )$.
\end{theorem}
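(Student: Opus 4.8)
The plan is to first upgrade completeness to compactness by a curvature estimate, and then to exploit an integrated Simons-type identity for the tracefree second fundamental form $\mathring h=h-\tfrac1n\,g\otimes H$. Since $H$ is parallel, $|H|$ is a constant, so $\alpha(n,|H|,c)$ is a fixed number and the hypothesis provides a constant $\varepsilon>0$ with $|h|^2\le\alpha(n,|H|,c)-\varepsilon$ on all of $M$; also $|H|^2>-n^2c>0$, so $H\neq0$. Expanding the Ricci curvature through the Gauss equation and using the elementary bounds $\langle h_{ii},H\rangle\ge-|h_{ii}|\,|H|$ and Cauchy--Schwarz, one checks that this pinching, together with $|H|^2+n^2c>0$, forces $\mathrm{Ric}_M>0$ everywhere. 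By the Bonnet--Myers theorem $M$ is then compact, so integration by parts becomes available (alternatively, as $\mathrm{Ric}_M$ is bounded below, one could invoke the Omori--Yau maximum principle directly).

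Next I would derive the Simons identity. Using the Codazzi and Ricci equations in the space form $\mathbb{H}^{n+q}(c)$ and the fact that $\nabla^\perp H=0$ kills the Hessian-of-$H$ contribution, one obtains
\[
\tfrac12\Delta|\mathring h|^2=|\nabla\mathring h|^2+\mathcal R ,
\]
where $\mathcal R=nc\,|\mathring h|^2+\mathcal R_{\mathrm{quad}}(\mathring h)+\mathcal R_{\mathrm{mix}}(\mathring h,H)-\sum_{\alpha,\beta}|[A_\alpha,A_\beta]|^2-\sum_{\alpha,\beta}(\operatorname{tr}A_\alpha A_\beta)^2$ collects the quadratic ambient-curvature terms, a cubic term coupling $\mathring h$ to the mean-curvature direction $\xi=H/|H|$, and the quartic self-interaction of the shape operators $A_\alpha$, all expressed through $\mathring h$.

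The heart of the matter, and the step I expect to be the main obstacle, is the purely algebraic estimate showing that $|h|^2<\alpha(n,|H|,c)$ implies $\mathcal R\ge\varepsilon\,|\mathring h|^2\ge0$. To establish it I would split $h$ into its component along $\xi$ and the remaining (automatically tracefree) normal components, bound the quartic terms by the sharp Li--Li inequality $\sum|[A_\alpha,A_\beta]|^2+\sum(\operatorname{tr}A_\alpha A_\beta)^2\le\tfrac32|\mathring h|^4+\cdots$ adapted to this splitting, and control the mixed cubic term by a Lagrange-multiplier optimization. The extremal configuration here is precisely the one realized by the non-umbilical parallel-mean-curvature examples, and it is the requirement of beating this configuration that forces the exact form of $\alpha(n,|H|,c)$ and makes the pinching optimal.

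Finally, integrating the Simons identity over the closed manifold $M$ gives $0=\int_M|\nabla\mathring h|^2+\int_M\mathcal R$, and since both integrands are nonnegative with $\mathcal R\ge\varepsilon|\mathring h|^2$, we conclude $\mathring h\equiv0$; equivalently, evaluating the identity at a maximum point of $|\mathring h|^2$ yields the same conclusion via the maximum principle. Thus $M$ is totally umbilical. A complete totally umbilical submanifold of $\mathbb{H}^{n+q}(c)$ lies in a totally geodesic $\mathbb{H}^{n+1}(c)$, and the conditions $\nabla^\perp H=0$ and $|H|^2+n^2c>0$ exclude totally geodesic submanifolds, horospheres and equidistant hypersurfaces, leaving a geodesic sphere; computing its principal curvatures shows its induced metric is that of $\mathbb{S}^n\bigl(n/\sqrt{|H|^2+n^2c}\,\bigr)$, which completes the proof.
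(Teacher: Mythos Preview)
The paper does not actually prove this statement: Theorem~1.1 is quoted from the second author's earlier work (references \cite{Xu1,xuhwrigidity}) as background motivation, so there is no in-paper proof to compare against directly. That said, the paper does develop, for its own purposes, essentially all of the ingredients your sketch calls for, and your outline matches what one finds there.

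Specifically: your compactness step via a Ricci lower bound is exactly the content of Lemma~\ref{ric} (which in turn rests on Proposition~2 of \cite{MR1458750} and the identity $\frac{n-2}{\sqrt{n(n-1)}}|H|\sqrt{\mathring\alpha}=\frac1n|H|^2-\mathring\alpha+nc$ recorded as Lemma~\ref{app}(ii)). Your Simons identity, once $\nabla^\perp H=0$ kills the $\langle\mathring h,\nabla^2 H\rangle$ and $|\nabla H|^2$ terms, is precisely \eqref{lapho2}, reducing to $\tfrac12\Delta|\mathring h|^2=|\nabla h|^2+W$. The algebraic heart you flag---showing $W\ge0$ under the pinching---is carried out in detail in the proof of Lemma~\ref{lapa0}: the paper splits $\mathring h$ along $\nu_1=H/|H|$ and its orthogonal complement, applies the Li--Li inequality \eqref{sP232} to the quartic normal block, and controls the cubic coupling not by Lagrange multipliers but by the sharp scalar inequality of Proposition~\ref{lao3}. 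The upshot is
\[
W\ \ge\ |\mathring h|^2\Big(nc-|\mathring h|^2+\tfrac1n|H|^2-\tfrac{n-2}{\sqrt{n(n-1)}}|H|\,|\mathring h|\Big),
\]
and by Lemma~\ref{app}(ii) the bracket vanishes exactly when $|\mathring h|^2=\mathring\alpha$, confirming your remark that the extremal configuration fixes the form of $\alpha$. Integrating (or applying the maximum principle) then forces $\mathring h\equiv0$, as you say.

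One caveat: the paper's version of the $W$-estimate (Lemma~\ref{lapa0}) uses the auxiliary inequality $\frac{n-2}{\sqrt{n(n-1)}}\ge\frac{\sqrt2}{2}$, which fails for $n\le5$; the cited rigidity theorem holds for all $n\ge3$, so in low dimensions the cross terms involving $Q_2$ must be handled slightly differently. Your sketch does not commit to that particular step, so this is not a gap in your argument, but it is worth being aware that the algebra needs a small adjustment below $n=6$.
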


Using nonexistence for stable currents on compact submanifolds of a
sphere and the generalized Poincar\'{e} conjecture in dimension
$n(\ge5)$ verified by Smale, Lawson and Simons \cite{LS} proved
that if $M^{n}(n\ge 5)$ is an oriented compact submanifold in
$\mathbb{S}^{n+p}$, and if $|h|^2<2\sqrt{n-1}$, then $M$ is homeomorphic to a
sphere. Notice that $\min_{|H|}\alpha(n,|H|,1)=2\sqrt{n-1}.$
Shiohama and Xu \cite{MR1458750} improved Lawson-Simons' result and
proved the optimal sphere theorem.

\begin{theorem}
Let $M$ be an n-dimensional ($n \ge 4$) oriented complete
submanifold in $\mathbb{F}^{n+q} ( c )$ with $c\ge 0$. Suppose that
$\sup_{M} ( | h |^{2} -
  \alpha ( n, | H | ,c ) ) <0.$ Then $M$ is homeomorphic to a
sphere.
\end{theorem}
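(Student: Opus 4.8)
The plan is to run the argument of Lawson and Simons in its sharp form: use the pinching condition to rule out stable integral currents of every intermediate dimension on $M$ (and on its universal cover), conclude that these are integral homology spheres, and then finish with the generalized Poincar\'e conjecture in dimensions $\ge 4$. First I would establish compactness. Up to the constant $\alpha$, the hypothesis $\sup_M(|h|^2-\alpha(n,|H|,c))<0$ is precisely the threshold that forces the Ricci curvature of a submanifold of a space form to be positive: expanding $\mathrm{Ric}_M$ via the Gauss equation and applying Cauchy--Schwarz to the diagonal blocks of $h$ yields a pointwise bound $\mathrm{Ric}_M\ge\delta>0$, with $\delta$ depending only on $n$ and on $\inf_M(\alpha(n,|H|,c)-|h|^2)>0$ (the regime $|H|\to\infty$ is harmless since $\alpha(n,|H|,c)\sim|H|^2/(n-1)$ while $|h|^2\ge|H|^2/n$). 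By Bonnet--Myers, $M$ is then compact with finite fundamental group; this also covers the noncompact case $c=0$, where $\mathbb F^{n+q}(c)$ is Euclidean space. Since the pinching is pointwise and is preserved under Riemannian covering, the universal cover $\widetilde M$ is again a compact submanifold of $\mathbb F^{n+q}(c)$ satisfying the same condition, so the argument below applies to it as well as to $M$.

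The core is the verification of the Lawson--Simons curvature condition. For $x\in M$, an orthonormal basis $e_1,\dots,e_n$ of $T_xM$, and $1\le p\le n-1$, testing the second variation of mass of an integral $p$-current against the conformal vector fields of the ambient space form produces an index form whose trace equals (a positive multiple of)
\[
Q_p \;=\; \sum_{i=1}^{p}\sum_{j=p+1}^{n}\Bigl(2|h(e_i,e_j)|^2-\langle h(e_i,e_i),h(e_j,e_j)\rangle\Bigr)\;-\;p(n-p)\,c .
\]
If $Q_p<0$ for every $x$, every such basis and every $p$, then $M$ carries no stable integral $p$-current, so by the compactness theorem for integral currents (Federer--Fleming) any nonzero class in $H_p(M;\mathbb Z)$ would be represented by a mass-minimizing --- hence stable --- current, a contradiction; thus $H_p(M;\mathbb Z)=0$ for $1\le p\le n-1$. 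Everything therefore reduces to the algebraic assertion: $|h|^2<\alpha(n,|H|,c)$ at $x$ implies $Q_p<0$ for every frame and every $p$. I would estimate $\sum_{i\le p<j}2|h(e_i,e_j)|^2\le\sum_{i\ne j}|h(e_i,e_j)|^2=|h|^2-\sum_i|h(e_i,e_i)|^2$, rewrite $\sum_{i\le p<j}\langle h(e_i,e_i),h(e_j,e_j)\rangle=\langle A,H-A\rangle$ with $A=\sum_{i\le p}h(e_i,e_i)$ and $H=\sum_i h(e_i,e_i)$, and then maximize the resulting quantity over the admissible data; the extremal configuration produces a quadratic inequality in $|H|$ whose critical value is exactly $\alpha(n,|H|,c)$ --- which is precisely why the radical $\sqrt{|H|^4+4(n-1)c\,|H|^2}$ enters the definition of $\alpha$. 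This is where I expect the real work to lie: the Cauchy--Schwarz groupings and the choice of what to optimize must be tuned so that the bound is sharp --- a cruder estimate recovers only the weaker Lawson--Simons constant $2\sqrt{n-1}$ --- and the estimate has to hold uniformly in $p$.

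Finally, the topology. Applying the previous step to $\widetilde M$ gives $H_p(\widetilde M;\mathbb Z)=0$ for $1\le p\le n-1$; being simply connected, $\widetilde M$ is a homotopy $n$-sphere by Hurewicz and Whitehead, hence homeomorphic to $S^n$ by Smale ($n\ge5$) or Freedman ($n=4$). Applying the step to $M$ itself shows that $M$ is an orientable integral homology $n$-sphere, so $\Gamma:=\pi_1(M)$ is finite and perfect and, since $\widetilde M$ is $(n-1)$-connected, satisfies $H_i(\Gamma;\mathbb Z)=0$ for $1\le i\le n-1$; as $\Gamma$ acts freely on $\widetilde M\simeq S^n$ it has periodic cohomology of period dividing $n+1$, and the standard classification of finite groups with periodic cohomology together with the homology-sphere condition ($H^{\mathrm{period}}(\Gamma;\mathbb Z)$ is nonzero and, when the period is $\le n-1$, equals $H^{\mathrm{period}}(M;\mathbb Z)=0$) forces $\Gamma=1$ as soon as $n\ge4$ --- for $n=3$ the binary icosahedral group on the Poincar\'e homology sphere shows that this last step genuinely fails. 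Hence $M=\widetilde M$ is homeomorphic to $S^n$. It is worth stressing that the hypothesis $n\ge4$ is needed on the topological side exactly here, independently of whatever dimensional constraints would be forced by the curvature estimate.
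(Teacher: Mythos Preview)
The paper does not prove this theorem; it is quoted as a known result of Shiohama and Xu \cite{MR1458750} and serves only as motivation for the mean curvature flow results that follow. So there is no ``paper's own proof'' to compare against.

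That said, your outline is exactly the strategy of the original Shiohama--Xu paper: positive Ricci from the pinching gives compactness via Bonnet--Myers; the sharp algebraic estimate shows that the Lawson--Simons index form $Q_p$ is negative for every $p$ and every frame, hence there are no stable integral currents in intermediate dimensions; Federer--Fleming then kills $H_p(M;\mathbb Z)$ for $1\le p\le n-1$; and the generalized Poincar\'e conjecture finishes. You have correctly located the one place where real work is required --- the pointwise inequality ``$|h|^2<\alpha(n,|H|,c)\Rightarrow Q_p<0$'' --- and correctly noted that a crude estimate only recovers the weaker constant $2\sqrt{n-1}$. In the original argument this step is carried out by fixing $p$, writing $Q_p$ in terms of the splitting of $h$ into its trace and traceless parts relative to the $p$-plane, and optimizing; the extremal case is attained by the product examples $\mathbb F^{n-1}\times\mathbb F^1$ that saturate $\alpha$. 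Your proposal leaves this computation as a black box, which is fine at the level of a plan but is the step an actual proof would have to execute.

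Your treatment of the fundamental group at the end is more elaborate than necessary but correct in spirit. Once $H_1(M;\mathbb Z)=0$ and $\pi_1(M)$ is finite (Myers), the lift $\widetilde M$ is a simply connected integral homology sphere, hence homeomorphic to $S^n$; the deck group is then a finite perfect group acting freely on $S^n$, and the only such group for $n\ge 4$ is the trivial one (the binary icosahedral group, which gives the Poincar\'e sphere, has period~$4$ and is confined to $n=3$). One small caution: the passage to the universal cover requires that $\widetilde M$ again immerse in the \emph{same} simply connected space form $\mathbb F^{n+q}(c)$, which is immediate since $c\ge 0$ makes the ambient space simply connected.
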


The following problem is very attractive: \emph{Is it possible to
generalize Theorem 1.2 to the case of submanifolds in hyperbolic
spaces?} By investigating nonexistence for stable currents on
compact submanifolds, Fu and Xu \cite{MR2437072} obtained partial
solution to this problem.

Let $F_{0} :M^{n} \rightarrow N^{n+q} $ be an
$n$-dimensional submanifold smoothly immersed in a Riemannian manifold.
The mean curvature flow with initial value
$F_{0}$ is a smooth family of immersions $F: M \times [ 0,T )
\rightarrow N^{n+q} $ satisfying
\begin{equation}
  \left\{\begin{array}{l}
     \dt F ( x,t ) =H ( x,t ) ,\\
     F ( \cdot ,0 ) =F_{0} ,
   \end{array}\right.
\end{equation}
where $H ( x,t )$ is the mean curvature vector of the submanifold
$M_{t} =F_{t} ( M )$, $F_{t} =F ( \cdot ,t )$.

In 1984, Huisken {\cite{MR772132}} first proved that uniformly
convex hypersurfaces in Euclidean space will converge to a round
point along the mean curvature flow. Further discussions on
convergence results for the mean curvature flow of hypersurfaces in
certain Riemannian manifolds have been carried out by many other
authors {\cite{MR837523,MR892052,lx2014}}, etc. After the important
work on convergence results for the mean curvature flow of arbitrary
codimension in Euclidean spaces and spheres due to Andrews and Baker
{\cite{MR2739807,baker2011mean}}, Liu, Xu, Ye and Zhao
{\cite{MR3078951}} proved the following convergence result for
pinched submanifolds in hyperbolic spaces.

\begin{theorem}
  Let $F_{0} :M^{n} \rightarrow \mathbb{H}^{n+q} ( c )$
  be an n-dimensional ($n \ge 4$) compact submanifold immersed in the hyperbolic
  space. If
  $F_{0}$ satisfies $| h |^{2} \le \frac{1}{n-1} | H |^{2} +2c,$
  then the mean curvature flow with initial value $F_{0}$ has a unique smooth
  solution on a finite maximal time interval $[ 0,T )$, and the solution
  $F_{t}$ converges to a round point as $t \rightarrow T$.
\end{theorem}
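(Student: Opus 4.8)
\medskip
\noindent\textbf{Proof strategy.}

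The plan is to run the parabolic pinching scheme of Huisken \cite{MR772132}, in the arbitrary-codimension form developed by Andrews--Baker \cite{MR2739807,baker2011mean}, adapted to the hyperbolic background as in Liu--Xu--Ye--Zhao \cite{MR3078951}. Short-time existence of a unique smooth solution $F_t$ on a maximal interval $[0,T)$ is standard for a compact immersion. First I would derive, from the Gauss, Codazzi and Ricci equations in the constant-curvature space $\mathbb{H}^{n+q}(c)$, the evolution equations under the flow for the induced metric, for the second fundamental form $h$, and for the scalars $|h|^2$, $|H|^2$ and the trace-free part $\mathring h$, where $\normho^2=|h|^2-\tfrac1n|H|^2$. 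Since $c$ is constant, every ambient-curvature contribution is explicit and linear in $c$; schematically $\dt|H|^2=\Delta|H|^2-2|\nabla^\perp H|^2+2\sum_{i,j}\langle h_{ij},H\rangle^2+2nc\,|H|^2$, and $\dt|h|^2=\Delta|h|^2-2|\nabla h|^2+2(R_1-R_2+R_3)+c\,Q_0(h,H)$, where $R_1,R_2,R_3$ are the codimension reaction terms of \cite{MR2739807} (sums of $\langle h_{ij},h_{kl}\rangle^2$, the normal-curvature term, and so on) and $Q_0$ is an explicit quadratic; combining these gives the evolution of $\normho^2$.

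The second step, which is the heart of the proof, is to show that the pinching cone is preserved. Put $Q:=|h|^2-\tfrac{1}{n-1}|H|^2-2c$, so that the hypothesis reads $Q\le0$ on $M_0$, equivalently $\normho^2\le\tfrac{1}{n(n-1)}|H|^2+2c$. By Hamilton's maximum principle for reaction--diffusion systems it suffices, after discarding the good gradient and diffusion terms, to verify that the reaction term of $Q$ points into $\{Q\le0\}$ along its boundary: one needs an algebraic inequality showing that whenever $\normho^2=\tfrac{1}{n(n-1)}|H|^2+2c$ (with $|H|^2$ in the range forced by $\normho^2\ge0$, which since $c<0$ keeps $|H|^2$ bounded below by a positive constant), the quantity $2(R_1-R_2+R_3)-\tfrac{1}{n-1}\cdot2\sum_{i,j}\langle h_{ij},H\rangle^2$ together with the ambient $c$-terms is $\le0$. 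Here one invokes the sharp bounds on $R_1,R_2,R_3$ from \cite{MR2739807,baker2011mean}; this is exactly where $n\ge4$ is needed (the constant $\tfrac{1}{n-1}$ is only algebraically admissible from $n=4$ on, matching $\tfrac{4}{3n}$ at $n=4$) and where the slack $-2c>0$ in $Q$ is spent to absorb the negative ambient contributions. The strong maximum principle then makes the inequality strictly interior for $t>0$.

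The third step converts the preserved pinching into roundness and finite-time collapse. From $\dt|H|^2$, the preserved pinching and $\sum_{i,j}\langle h_{ij},H\rangle^2\ge\tfrac1n|H|^4$, the function $\min_{M_t}|H|^2$ is non-decreasing and grows at least like that of a shrinking geodesic sphere, so $T<\infty$ and $|H|\to\infty$. Next comes the pinching-improvement estimate: a Huisken-type maximum-principle argument for $\normho^2/\varphi(|H|^2)$, with $\varphi$ chosen so that the non-scale-invariant $c$-terms are dominated by the blowing-up $|H|$, yields $\normho^2\le C\,(|H|^2)^{1-\delta}$ near $T$ for some $\delta>0$; together with a gradient estimate $|\nabla h|^2\le\varepsilon|H|^4+C_\varepsilon$ and Stampacchia/$L^p$ iteration on $M_t$ this forces $\max_{M_t}\normho^2/|H|^2\to0$, hence $H_{\max}/H_{\min}\to1$ as $t\to T$. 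Finally, rescaling so that $\max_{M_t}|H|^2$ stays fixed, the above estimates plus parabolic interior regularity give $C^\infty$ subconvergence of the rescaled flows to a shrinking round sphere; undoing the rescaling shows $F_t$ converges to a round point.

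The step I expect to be the main obstacle is the algebraic reaction-term estimate of the second step in the presence of the negative ambient curvature: the vector-valuedness of $h$ makes $R_1,R_2,R_3$ substantially more delicate than in the hypersurface case, and one must route the $c$-terms precisely through the $2c$-allowance of the hypothesis. A secondary difficulty, specific to the hyperbolic background, is the non-scale-invariance of the $c$-terms, which must be tracked carefully against $|H|\to\infty$ in the pinching-improvement and gradient estimates of the third step.
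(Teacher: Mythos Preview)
The statement you address is Theorem~1.3 of the paper, and you should note that the paper does \emph{not} prove it: it is quoted as a prior result of Liu--Xu--Ye--Zhao \cite{MR3078951}, serving as background for the paper's Main Theorem (which treats the sharper pinching $|h|^2<\alpha(n,|H|,c)$). So there is no ``paper's own proof'' of this particular statement to compare against. That said, your outline is essentially the correct strategy and matches both \cite{MR3078951} and the template the present paper follows for its Main Theorem: derive the evolution equations for $|h|^2$, $|H|^2$, $\normho^2$ in the constant-curvature background (Lemma~\ref{evo} here), use the Andrews--Baker algebraic bounds on the reaction terms (Lemma~\ref{R12} here) together with the maximum principle to preserve the pinching cone (this is indeed where $n\ge4$ enters), then run a Stampacchia/De~Giorgi $L^p$ iteration on an auxiliary quotient to get $\normho^2\le C|H|^{2(1-\sigma)}$, a gradient estimate for $|\nabla H|$, and finally Myers-type diameter control plus rescaling to conclude convergence to a round point.

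One correction worth flagging: your argument that $T<\infty$ via monotone growth of $\min_{M_t}|H|^2$ is not how this is handled in hyperbolic space. The evolution $\partial_t|H|^2=\Delta|H|^2-2|\nabla H|^2+2R_2+2nc|H|^2$ carries the term $2nc|H|^2$ with $c<0$, which works against you. Both \cite{MR3078951} and the present paper (Proposition~3.5) instead use the Minkowski model $\mathbb H^{n+q}(c)\subset\mathbb R^{1,n+q}$: the height coordinate $x_0$ satisfies $\partial_t x_0=\Delta x_0+nc\,x_0$, so by the maximum principle $x_0\le(\sup_{t=0}x_0)\,\mathe^{nct}$, which forces $T<\infty$ for \emph{any} closed initial submanifold, independently of pinching. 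This is a minor point in an outline, but it is the one place where your sketch diverges from the actual argument.
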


When $n\ge4$, we have the unified convergence theorem
{\cite{MR2739807,baker2011mean,MR3078951}} of the mean curvature
flow in space forms under the pinching condition $| h |^{2} \le
\frac{1}{n-1} | H |^{2} +2c$, where $| H |^2+n^2c>0$. The initial
submanifold satisfying the above pinching condition possesses
non-negative sectional curvature. Meanwhile, Gu and Xu
{\cite{MR3005061,XG1}} obtained a convergence theorem for the Ricci
flow of submanifolds in space forms under the same pinching
condition. For any fixed positive constant $\varepsilon$, there are
examples {\cite{MR2739807,baker2011mean,MR892052,MR3078951}} which
show that the pinching condition can not be improved to $| h |^{2} <
\frac{1}{n-1} | H |^{2} +2c+\varepsilon$. Motivated by the rigidity,
sphere and convergence theorems above, i.e., Theorems 1.1-1.3, Liu,
Xu and Zhao {\cite{liu2013mean}} proposed the following.

\begin{conji}
Let $M_{0}$ be an n-dimensional ($n \ge 3$) complete
submanifold immersed in the hyperbolic space $\mathbb{H}^{n+q} ( c
)$. If $M_{0}$ satisfies $\sup_{M_{0}} ( | h |^{2} - \alpha ( n, | H
| ,c ) ) <0$ and $| H |^{2} +n^{2} c>0,$ then the mean curvature
flow with initial value $M_{0}$ has a unique smooth solution on a
finite maximal time interval $[ 0,T )$, and the solution $M_{t}$
converges to a round point as $t \rightarrow T$. In particular,
$M_0$ is diffeomorphic to the standard $n$-sphere
 $\mathbb{S}^{n}$.
\end{conji}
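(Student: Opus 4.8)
The plan is to follow the Huisken--Andrews--Baker scheme for the mean curvature flow of higher codimension, with the modifications forced by the negatively curved ambient space and, more seriously, by the fact that the pinching region $\{\,|h|^{2}<\alpha(n,|H|,c)\,\}$ is bounded by a curved hypersurface rather than a half-space. The first step is to linearize the hypothesis. Writing $|h|^{2}=\normho^{2}+\tfrac1n|H|^{2}$ and using the difference-of-squares identity
\begin{equation*}
\Big(\tfrac{n}{2(n-1)}|H|^{2}+nc-|h|^{2}\Big)^{2}-\tfrac{(n-2)^{2}}{4(n-1)^{2}}\big(|H|^{4}+4(n-1)c|H|^{2}\big)=\big(|h|^{2}-\alpha(n,|H|,c)\big)\big(|h|^{2}-\alpha^{+}(n,|H|,c)\big),
\end{equation*}
where $\alpha^{+}$ is obtained from $\alpha$ by reversing the sign of the square root, one sees that on the region $|H|^{2}+n^{2}c>0$ the two roots are real and the hypothesis $\sup_{M_{0}}(|h|^{2}-\alpha)<0$ is equivalent to the quadratic pinching inequality $\big(nc+\tfrac{n}{2(n-1)}|H|^{2}-|h|^{2}\big)^{2}>\tfrac{(n-2)^{2}}{4(n-1)^{2}}(|H|^{4}+4(n-1)c|H|^{2})$ together with the sign condition $|h|^{2}<nc+\tfrac{n}{2(n-1)}|H|^{2}$ --- two polynomial inequalities of degree at most two in $(|h|^{2},|H|^{2})$, to which a parabolic maximum principle can be applied. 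From this one also reads off that the hypothesis bounds $\normho^{2}$ by a small multiple of $|H|^{2}$ plus a curvature term; in particular $\mathrm{Ric}_{M_{0}}>0$, while the sectional curvature is free to change sign --- which is exactly why curvature-pinching arguments cannot be invoked and one must exploit the algebraic structure of the flow.

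Next I would invoke short-time existence on a maximal interval $[0,T)$ and record the evolution equations of $h$, $H$, $|h|^{2}$, $|H|^{2}$ and $\normho^{2}$ under the flow (1.2). Because $\mathbb{H}^{n+q}(c)$ is a space form its curvature tensor is parallel, so relative to the Euclidean and spherical cases only lower-order reaction terms proportional to $c$ enter, exactly as in Theorem 1.3; schematically $\dt|h|^{2}=\Delta|h|^{2}-2|\nabla h|^{2}+R_{1}$ and $\dt|H|^{2}=\Delta|H|^{2}-2|\nabla H|^{2}+R_{2}$, with $R_{1},R_{2}$ quadratic in $h$ modulo $c$-terms. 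The heart of the argument is to show that the pinching inequalities above are \emph{preserved} by the flow: I would let $f$ measure the signed defect in the quadratic inequality, apply Hamilton's tensor maximum principle, and on the set $\{f=0\}$ control the reaction part against the good gradient terms by means of a Simons-type inequality $|\nabla h|^{2}\ge\tfrac{3}{n+2}|\nabla H|^{2}$ and its sharper refinements. The bookkeeping is delicate because of the square root hidden in $\alpha^{+}$, and the algebraic inequality among the reaction coefficients that one is left with turns out to close only when $n$ is large enough; I expect this threshold to be exactly $n\ge6$, so this preservation step is both the crux of the proof and the source of the dimension hypothesis.

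Granted the preserved pinching, the remainder is the now-standard contraction machinery. The pinching keeps every $M_{t}$ inside a fixed compact part of $\mathbb{H}^{n+q}(c)$ and, by the strong maximum principle, keeps $|H|>0$ with an integral lower bound, so that $T<\infty$ and the flow shrinks to a single point $p$. I would then prove a pinching-improvement estimate by Stampacchia iteration in the spirit of Huisken: combining the Michael--Simon Sobolev inequality with the integral curvature bounds that follow from the evolution equations, one shows $\normho^{2}\le C\,(|H|^{2})^{1-\delta}$ for some $\delta>0$, hence $\normho^{2}/|H|^{2}\to0$ as $t\to T$, so $M_{t}$ becomes asymptotically totally umbilic while $H$ stays bounded away from zero. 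Finally I would rescale the flow about $p$; since $c\,\lambda^{-2}\to0$ the rescaled ambient spaces converge to $\mathbb{R}^{n+q}$, and by Huisken's monotonicity formula the rescaled flows subconverge there to a homothetically shrinking solution, which the umbilicity estimate forces to be a round $n$-sphere. This yields convergence of $M_{t}$ to a round point, and the flow itself supplies a diffeomorphism $M_{0}\cong\mathbb{S}^{n}$, which establishes both assertions.
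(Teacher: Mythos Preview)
First, note that Conjecture~A is \emph{not} proved in full in the paper: the Main Theorem establishes only the case $n\ge 6$, and Section~7 discusses the remaining low dimensions separately without settling them. You correctly anticipate that the dimension threshold enters at the preservation step, so your proposal should be read as a sketch for the Main Theorem rather than for the full conjecture.

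On preservation of the pinching, your rationalised quadratic reformulation is algebraically correct, but the paper proceeds differently and, I think, more efficiently. Rather than tracking a polynomial which is quartic in $h$ (and whose evolution would then be sextic), the paper works directly with the scalar $\mathring\alpha(|H|^2)=\alpha-\tfrac1n|H|^2$ and applies the scalar maximum principle to $U=\normho^{2}-\mathring\alpha+\varepsilon\omega$, where $\omega=|H|^{2}+4(n-1)c$. The entire difficulty is absorbed into a short list of calculus facts about $\mathring\alpha$ (Lemma~\ref{app}): the identity (i) makes the leading reaction terms cancel exactly, (iii) handles the codimension term $P_2$, and (vi) --- which is precisely where $n\ge 6$ is used --- controls the gradient terms against $|\nabla h|^{2}\ge\tfrac{3}{n+2}|\nabla H|^{2}$. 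Your polynomial route could in principle be made to work, but it is not clear the sextic reaction closes without reintroducing the same derivative inequalities, so the gain over the paper's ``nonlinear barrier'' is doubtful.

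On the endgame, your sketch departs more substantially. The paper does \emph{not} use a Huisken-type monotonicity formula or a blow-up to a Euclidean self-shrinker. After the Stampacchia/De~Giorgi pinching-improvement $\normho^{2}\le C_{0}|H|^{2(1-\sigma)}$ (Theorem~\ref{sa0h2}, using the auxiliary function $f_{\sigma}=\normho^{2}/\mathring\alpha^{1-\sigma}$ and the key lower bound $W\ge\tfrac{\varepsilon}{4}|H|^{2}\normho^{2}$ of Lemma~\ref{lapa0}), the paper proves a gradient estimate $|\nabla H|<\eta^{2}|H|^{2}+C(\eta)$ (Theorem~\ref{dH2}) by a maximum-principle argument on a combination of $|\nabla H|^{2}$, $|H|^{4}$, $|H|^{2}\normho^{2}$ and $\normho^{2}$. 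Combined with the Ricci lower bound $\mathrm{Ric}\ge\tfrac{n-1}{4n}\varepsilon|H|^{2}$ coming from the preserved pinching (Lemma~\ref{ric}), Myers' theorem then yields $\operatorname{diam}M_{t}\to 0$ and $|H|_{\min}/|H|_{\max}\to 1$, after which a volume-normalised rescaling (as in \cite{liu2012mean}) finishes the proof. Your monotonicity-and-blow-up alternative is plausible in spirit, but making it rigorous in hyperbolic ambient requires controlling the ambient-curvature error under rescaling and classifying the limit; this is considerably more work than you indicate, and in any case the paper's gradient estimate --- which your outline omits entirely --- is the actual technical core of the convergence argument here.
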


The purpose of the present article is to prove Conjecture A in
dimension $n(\ge6)$. We will establish the optimal convergence
theorem for the mean curvature flow of arbitrary codimension in
hyperbolic spaces, which implies the optimal differentiable sphere
theorem.

\begin{mthm}
Let $F_{0} :M^{n} \rightarrow \mathbb{H}^{n+q} ( c )$ be an n-dimensional ($n
\ge 6$) complete submanifold immersed in the hyperbolic space with
constant curvature $c$. If $F_{0}$ satisfies
\[ \sup_{F_{0}} ( | h |^{2} - \alpha ( n, | H | ,c ) ) <0 \hspace{1em}
   \tmop{and} \hspace{1em} | H |^{2} +n^{2} c>0, \]
then the mean curvature flow with initial value $F_{0}$ has a unique smooth
solution $F: M \times [ 0,T ) \rightarrow \mathbb{H}^{n+q} ( c )$ on a finite
maximal time interval, and $F_t$ converges to a round point as $t \rightarrow
T$. In particular, $M$ is diffeomorphic to the standard $n$-sphere
$\mathbb{S}^{n}$.
\end{mthm}

Since $\alpha ( n, | H | ,c ) > \frac{1}{n-1} | H |^{2} +2c$, our
main theorem improves Theorem 1.3 for $n\ge6$. Note that every
initial submanifold in the convergence results
\cite{andrews2002mean,MR2739807,baker2011mean,MR3005061,MR772132,MR837523,MR892052,MR3078951,liu2012mean,XG1}
possesses quasi-positive curvature. The pinching condition in Main Theorem
implies that the Ricci curvature of the initial submanifold is
positive \cite{MR1458750}, but does not imply positivity of the
sectional curvature. The following example shows the pinching
condition in Main Theorem is optimal for arbitrary $n (\ge 6)$.

\begin{example}
  Let $\lambda$, $\mu$ be positive constants satisfying $\lambda   \mu =-c$
  and $\lambda > \sqrt{-c}$, where $c<0$. For $n \ge 3$, we consider the
  submanifold $M=\mathbb{F}^{n-1} ( c+ \lambda^{2} ) \times \mathbb{F}^{1} (
  c+ \mu^{2} ) \subset \mathbb{H}^{n+q} ( c )$. Then $M$ is a complete
  submanifold with parallel mean curvature, which satisfies $| H | \equiv (
  n-1 ) \lambda + \mu >n \sqrt{-c}$ and $| h |^{2} \equiv ( n-1 ) \lambda^{2} +
  \mu^{2} = \alpha ( n, | H | ,c )$.
\end{example}

The key ingredient of the proof of Main Theorem is to establish the
elaborate estimates for the pinching quantity $\mathring{\alpha} =
\alpha ( n, | H | ,c ) - \frac{1}{n} | H |^{2}$, because our
pinching condition is sharper than that in Theorem 1.3. Using the
properties of $\mathring{\alpha}$ and the evolution equations, we
first derive that $\normho^{2} < \mathring{\alpha}$ is preserved
along the mean curvature flow. Applying a new auxiliary function
$f_{\sigma} = \normho^{2}/\mathring{\alpha}^{1- \sigma}$, we deduce
that $\normho^{2} \le C_{0}   | H |^{2 ( 1- \sigma )}$ via the De
Giorgi iteration. We then obtain an estimate for $| \nabla H |$.
Finally, using estimates for $| \nabla H |$ and the Ricci curvature,
we show that $\tmop{diam} M_{t} \rightarrow 0$ and $| H
  |_{\min} / | H |_{\max} \rightarrow 1$ as $t
\rightarrow T$. This implies the flow shrinks to a round point.

\section{Notations and formulas}

Let $( M^{n} ,g )$ be a Riemannian submanifold immersed in a space form
$\mathbb{F}^{n+q} ( c )$ with constant curvature $c$. We denote by
$\bar{\nabla}$ the Levi-Civita connection of the ambient space
$\mathbb{F}^{n+q} ( c )$. We use the same symbol $\nabla$ to represent the
connections of the tangent bundle $T M$ and the normal bundle $N M$. Denote by
$( \cdot )^{\top}$ and $( \cdot )^{\bot}$ the projections onto $T M$ and $N
M$, respectively. For $u,v \in \Gamma ( T M )$, $\xi \in \Gamma ( N M )$, the
connections $\nabla$ is given by $\nabla_{u} v= ( \bar{\nabla}_{u} v )^{\top}$
and $\nabla_{u} \xi = ( \bar{\nabla}_{u} \xi )^{\bot}$. The second fundamental
form of $M$ is defined as
\[ h ( u,v ) = ( \bar{\nabla}_{u} v )^{\bot} . \]

Let $\{ e_{i}   \,|\,  1 \le i \le n \}$ be a local orthonormal frame
for the tangent bundle and $\{ \nu_{\alpha}   \,|\,  1 \le \alpha \le
q \}$ be a local orthonormal frame for the normal bundle. Let $\{ \omega_{i}
\}$ be the dual frame of $\{ e_{i} \}$. With the local frame, the first and
second fundamental forms can be written as $g= \sum_{i} \omega^{i} \otimes
\omega^{i}$ and $h= \sum_{i,j, \alpha} h^{\alpha}_{i j}   \omega^{i} \otimes
\omega^{j} \otimes \nu_{\alpha}$, respectively. The mean curvature vector is
given by
\[ H= \sum_{\alpha} H^{\alpha} \nu_{\alpha} , \hspace{1em} H^{\alpha} =
   \sum_{i} h^{\alpha}_{i i} . \]
We denote by $\nabla^{2}_{i,j} T= \nabla_{i} ( \nabla_{j} T ) -
\nabla_{\nabla_{i} e_{j}} T$ the second order covariant derivative of tensor.
Then the Laplacian of a tensor is defined by $\Delta T= \sum_{i}
\nabla^{2}_{i,i} T$.

We have the following estimates for the gradient of second fundamental form.

\begin{lemma}
  \label{dA2}For every submanifold in a space form, we have
  \begin{enumerateroman}
    \item $| \nabla h |^{2} \ge \frac{3}{n+2} | \nabla H |^{2}$,

    \item $| \nabla | H |^{2} | \le 2 | H |   | \nabla H |$.
  \end{enumerateroman}
\end{lemma}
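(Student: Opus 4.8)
The plan is to prove Lemma \ref{dA2} by purely pointwise linear‑algebra arguments at a fixed point $p\in M$, using the Codazzi equation in a space form, which states that $\nabla h$ is a fully symmetric $3$-tensor: $(\nabla_i h)^\alpha_{jk} = (\nabla_j h)^\alpha_{ik} = (\nabla_k h)^\alpha_{ij}$. Both inequalities are then consequences of decomposing $\nabla h$ into its ``trace part'' (which is essentially $\nabla H$) and a trace‑free remainder, and observing that these pieces are orthogonal.

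\smallskip

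For part (i), fix a normal direction $\nu_\alpha$ and work with the symmetric $3$-tensor $T_{ijk} = (\nabla_i h)^\alpha_{jk}$, whose trace is $T_{i} := \sum_j T_{jji} = \sum_j (\nabla_j h)^\alpha_{ji} = (\nabla_i H)^\alpha$ (using symmetry of $T$ and that differentiation commutes with the trace). The standard device is to write $T_{ijk} = S_{ijk} + R_{ijk}$, where $S_{ijk}$ is the symmetric tensor built from $T_i$ that has the same trace as $T$, namely $S_{ijk} = \frac{1}{n+2}\bigl(\delta_{ij}T_k + \delta_{jk}T_i + \delta_{ki}T_j\bigr)$, and $R = T - S$ is totally trace‑free. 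A direct computation gives $|S|^2 = \frac{3}{n+2}|T^{(1)}|^2$ where $T^{(1)}_i = T_i$, and, crucially, $\langle S, R\rangle = 0$ because $R$ is trace‑free while every term of $S$ carries a Kronecker delta; hence $|T|^2 = |S|^2 + |R|^2 \ge |S|^2 = \frac{3}{n+2}\sum_i (T_i)^2$. Summing over $\alpha$ yields $|\nabla h|^2 \ge \frac{3}{n+2}|\nabla H|^2$. (One must double‑check the normalizing constant $\tfrac1{n+2}$: requiring $\sum_j S_{jjk} = T_k$ forces the coefficient to be $\tfrac1{n+2}$, and then $|S|^2 = \tfrac1{(n+2)^2}\cdot 3(n+2)\sum(T_k)^2 = \tfrac{3}{n+2}\sum(T_k)^2$.)

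\smallskip

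For part (ii), since $|H|^2 = \langle H, H\rangle$ is a smooth function, $\nabla_i |H|^2 = 2\langle \nabla_i H, H\rangle$, so by Cauchy–Schwarz in the normal bundle $|\nabla_i|H|^2| \le 2\,|H|\,|\nabla_i H|$ pointwise in $i$; taking the norm over $i$ and applying Cauchy–Schwarz once more (or just $\sum_i a_i^2 b^2 \le b^2 \sum_i a_i^2$ with $a_i = |\nabla_i H|$) gives $|\nabla|H|^2| = \bigl(\sum_i (\nabla_i|H|^2)^2\bigr)^{1/2} \le 2|H|\,|\nabla H|$.

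\smallskip

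The only place where any care is needed — and the ``main obstacle,'' though it is a mild one — is establishing the orthogonality $\langle S,R\rangle = 0$ and getting the constant exactly right; everything else is Codazzi plus Cauchy–Schwarz. There is no PDE or flow input here, only the symmetry of $\nabla h$ in the ambient space form, so the lemma holds for every fixed time slice $M_t$ along the flow without further argument.
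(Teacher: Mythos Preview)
Your proof is correct and follows exactly the approach the paper indicates: for (i) you reproduce the standard Huisken/Andrews--Baker decomposition of the fully symmetric (by Codazzi) tensor $\nabla h$ into its trace part $S$ and trace-free part $R$, and for (ii) you apply Cauchy--Schwarz just as the paper says. The constants and the orthogonality $\langle S,R\rangle=0$ are verified correctly.
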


The proof of (i) is the same as in {\cite{MR2739807,MR772132}}, and
(ii) follows from the Cauchy-Schwarz inequality.

Let $\mathring{h} =h- \tfrac{1}{n} g \otimes H$ be the traceless
second fundamental form of $M$. Its norm is given by $\normho^{2} =
| h |^{2} - \frac{1}{n} | H |^{2}$. As in
{\cite{MR2739807,baker2011mean}}, we define the following scalars on
$M$.
\[ R_{1} = \sum_{\alpha , \beta} \left( \sum_{i,j} h^{\alpha}_{i j}
   h^{\beta}_{i j} \right)^{2} + \sum_{i,j, \alpha , \beta} \left( \sum_{k} (
   h^{\alpha}_{i k}  h^{\beta}_{j k} -h^{\beta}_{i k}  h^{\alpha}_{j k} )
   \right)^{2} , \]
\[ R_{2} = \sum_{i,j} \left( \sum_{\alpha} H^{\alpha} h^{\alpha}_{i j}
   \right)^{2} , \]
\[ W=n c \normho -R_{1} + \sum_{i,j,k, \alpha , \beta} H^{\alpha} h_{i
   k}^{\alpha} h_{i j}^{\beta} h_{j k}^{\beta} . \]
By a direct computation, we get the following identity for the
Laplacian of $\normho^{2}$.

\begin{equation}
  \frac{1}{2} \Delta \normho^{2} = \left\langle \mathring{h} , \nabla^{2} H
  \right\rangle + | \nabla h |^{2} - \frac{1}{n} | \nabla H |^{2} + W.
  \label{lapho2}
\end{equation}

At a fixed point in $M$, we choose an orthonormal frame $\{
\nu_{\alpha} \}$ for the normal space, such that $H= | H | \nu_{1}$,
and an orthonormal frame $\{ e_{i} \}$ for the tangent space, such
that $( h^{1}_{i j} )$ is diagonal. Then $\left( \mathring{h}^{1}_{i
j} \right)$ is also diagonal, we denote its diagonal elements by
$\mathring{\lambda}_{i}$. Thus $\mathring{\lambda}_{i} =h^{1}_{i i}
- \frac{1}{n} | H |$ and $\mathring{h}^{\alpha}_{i j} =h^{\alpha}_{i
j}$ for $\alpha >1$. We split $\normho^{2}$ into three parts
\begin{equation}
  \normho^{2} =P_{1} +P_{2} , \hspace{1em} P_{2} =Q_{1} +Q_{2} , \label{hopq}
\end{equation}
where
\[ P_{1} = \sum_{i} \mathring{\lambda}_{i}^{2} , \hspace{1em} Q_{1} =
   \sum_{\tmscript{\begin{array}{c}
     \alpha >1\\
     i
   \end{array}}} \left( \mathring{h}^{\alpha}_{i i} \right)^{2} , \hspace{1em}
   Q_{2} = \sum_{\tmscript{\begin{array}{c}
     \alpha >1\\
     i \neq j
   \end{array}}} \left( \mathring{h}^{\alpha}_{i j} \right)^{2} . \]
With the special frame, $R_{1}$ becomes
\begin{eqnarray*}
  R_{1} & = & P_{1}^{2} + \frac{2}{n} P_{1} | H |^{2} + \frac{1}{n^{2}} | H
  |^{4}\\
  &  & +2 \sum_{\alpha >1} \left( \sum_{i} \mathring{\lambda}_{i}
  \mathring{h}^{\alpha}_{i i} \right)^{2} + \sum_{\alpha , \beta >1} \left(
  \sum_{i,j} \mathring{h}^{\alpha}_{i j}   \mathring{h}^{\beta}_{i j}
  \right)^{2}\\
  &  & +2 \sum_{\tmscript{\begin{array}{c}
    \alpha >1\\
    i \neq j
  \end{array}}} \left( \left( \mathring{\lambda}_{i} - \mathring{\lambda}_{j}
  \right) \mathring{h}^{\alpha}_{i j} \right)^{2} +
  \sum_{\tmscript{\begin{array}{c}
    \alpha , \beta >1\\
    i,j
  \end{array}}} \left( \sum_{k} \left( \mathring{h}^{\alpha}_{i k}
  \mathring{h}^{\beta}_{j k} - \mathring{h}^{\alpha}_{j k}
  \mathring{h}^{\beta}_{i k} \right) \right)^{2} .
\end{eqnarray*}
By the Cauchy-Schwarz inequality, we have
\begin{equation}
  \sum_{\alpha >1} \left( \sum_{i} \mathring{\lambda}_{i}
  \mathring{h}^{\alpha}_{i i} \right)^{2} \le \sum_{\alpha >1} \left(
  \sum_{i} \mathring{\lambda}_{i}^{2} \right) \left( \sum_{i} \left(
  \mathring{h}^{\alpha}_{i i} \right)^{2} \right) =P_{1} Q_{1} . \label{P1Q1}
\end{equation}
We also have
\begin{equation}
  \sum_{\tmscript{\begin{array}{c}
    \alpha >1\\
    i \neq j
  \end{array}}} \left( \left( \mathring{\lambda}_{i} - \mathring{\lambda}_{j}
  \right) \mathring{h}^{\alpha}_{i j} \right)^{2} \le
  \sum_{\tmscript{\begin{array}{c}
    \alpha >1\\
    i \neq j
  \end{array}}} 2 \left( \mathring{\lambda}_{i}^{2} +
  \mathring{\lambda}_{j}^{2} \right) \left( \mathring{h}^{\alpha}_{i j}
  \right)^{2} \le 2P_{1} Q_{2} .
\end{equation}
It follows from Theorem 1 of {\cite{MR1161925}} that
\begin{equation}
  \sum_{\alpha , \beta >1} \left( \sum_{i,j} \mathring{h}^{\alpha}_{i j}
  \mathring{h}^{\beta}_{i j} \right)^{2} + \sum_{\tmscript{\begin{array}{c}
    \alpha , \beta >1\\
    i,j
  \end{array}}} \left( \sum_{k} \left( \mathring{h}^{\alpha}_{i k}
  \mathring{h}^{\beta}_{j k} - \mathring{h}^{\alpha}_{j k}
  \mathring{h}^{\beta}_{i k} \right) \right)^{2} \le \frac{3}{2}
  P_{2}^{2} . \label{sP232}
\end{equation}
Then we obtain
\begin{equation}
  R_{1} \le P_{1}^{2} + \frac{2}{n} P_{1} | H |^{2} + \frac{1}{n^{2}} |
  H |^{4} +2P_{1} Q_{1} +4P_{1} Q_{2} + \frac{3}{2}  P_{2}^{2} .
  \label{R1nogreater}
\end{equation}

We also have
\begin{equation}
  R_{2} = \sum_{i,j} ( | H | h^{1}_{i j} )^{2} = | H |^{2} \left( P_{1} +
  \frac{1}{n} | H |^{2} \right) . \label{R2eq}
\end{equation}
Combining (\ref{hopq}), (\ref{R1nogreater}) and (\ref{R2eq}), we obtain

\begin{lemma} For every submanifold in a space form, we have
  \label{R12}{\tmdummy}

  \begin{enumerateroman}
    \item
    \[ R_{1} - \frac{1}{n} R_{2} \le \normho^{4} + \frac{1}{n}
       \normho^{2} | H |^{2} +2P_{2} \normho^{2} - \frac{1}{n} P_{2} | H |^{2}
       , \]
    \item
    \[ R_{2} = \normho^{2} | H |^{2} + \frac{1}{n} | H |^{4} -P_{2} | H |^{2}
       . \]
  \end{enumerateroman}
\end{lemma}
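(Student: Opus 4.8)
The plan is to prove Lemma~\ref{R12} by collecting the algebraic identities and inequalities already assembled in this section and recognizing that the two asserted bounds are purely pointwise statements at the distinguished frame. First I would establish (ii): starting from \eqref{R2eq}, which gives $R_2 = |H|^2 (P_1 + \frac1n |H|^2)$, I substitute $P_1 = \normho^2 - P_2$ (this is \eqref{hopq}) to obtain $R_2 = |H|^2 (\normho^2 - P_2 + \frac1n |H|^2) = \normho^2 |H|^2 + \frac1n |H|^4 - P_2 |H|^2$, which is exactly (ii). This step is immediate and requires no new estimate.

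For (i), the strategy is to combine \eqref{R1nogreater} with (ii). From \eqref{R1nogreater} we have $R_1 \le P_1^2 + \frac2n P_1 |H|^2 + \frac1{n^2}|H|^4 + 2P_1 Q_1 + 4 P_1 Q_2 + \frac32 P_2^2$, and subtracting $\frac1n R_2 = \frac1n |H|^2 (P_1 + \frac1n |H|^2) = \frac1n P_1 |H|^2 + \frac1{n^2}|H|^4$ cancels the $\frac1{n^2}|H|^4$ term and reduces the coefficient of $P_1 |H|^2$ from $\frac2n$ to $\frac1n$. So $R_1 - \frac1n R_2 \le P_1^2 + \frac1n P_1 |H|^2 + 2P_1 Q_1 + 4 P_1 Q_2 + \frac32 P_2^2$. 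It then remains to check the purely algebraic inequality
\[
 P_1^2 + \tfrac1n P_1 |H|^2 + 2P_1 Q_1 + 4 P_1 Q_2 + \tfrac32 P_2^2
 \;\le\; \normho^4 + \tfrac1n \normho^2 |H|^2 + 2 P_2 \normho^2 - \tfrac1n P_2 |H|^2 .
\]
Using $\normho^2 = P_1 + P_2$ and $P_2 = Q_1 + Q_2$, one expands the right-hand side: $\normho^4 = P_1^2 + 2P_1 P_2 + P_2^2$, $\frac1n \normho^2 |H|^2 - \frac1n P_2 |H|^2 = \frac1n P_1 |H|^2$, and $2P_2 \normho^2 = 2P_1 P_2 + 2P_2^2$. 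Hence the right-hand side equals $P_1^2 + \frac1n P_1 |H|^2 + 4 P_1 P_2 + 3 P_2^2$. Subtracting the left-hand side, the claimed inequality is equivalent to $4P_1 P_2 + 3 P_2^2 - 2P_1 Q_1 - 4 P_1 Q_2 - \frac32 P_2^2 \ge 0$, i.e. $4P_1(Q_1 + Q_2) + \frac32 P_2^2 - 2P_1 Q_1 - 4P_1 Q_2 \ge 0$, i.e. $2P_1 Q_1 + \frac32 P_2^2 \ge 0$, which holds trivially since $P_1, Q_1, P_2$ are sums of squares and hence nonnegative.

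The only genuine input beyond bookkeeping is that this section has already imported the two nontrivial facts needed: the Li--Li inequality \eqref{sP232} bounding the mixed normal-index terms by $\frac32 P_2^2$ (from Theorem~1 of \cite{MR1161925}), and the Cauchy--Schwarz estimates \eqref{P1Q1} and the one bounding $\sum_{\alpha>1, i\neq j}((\mathring\lambda_i - \mathring\lambda_j)\mathring h^\alpha_{ij})^2$ by $2P_1 Q_2$. So there is no real obstacle here; the proof is a matter of substituting $P_1 = \normho^2 - P_2$ and $P_2 = Q_1 + Q_2$ into the bound \eqref{R1nogreater}, carrying out the subtraction of $\frac1n R_2$ via \eqref{R2eq}, and observing that the resulting inequality reduces to the nonnegativity of $2P_1 Q_1 + \frac32 P_2^2$. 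If anything demands care it is merely tracking the coefficients correctly through the expansion of $\normho^4$, $2P_2\normho^2$, and $\frac1n\normho^2|H|^2 - \frac1n P_2|H|^2$ in terms of $P_1, P_2, Q_1, Q_2$; a wrong sign or coefficient there would be the place an error could creep in, but the target inequality has enough slack (the leftover $2P_1 Q_1$ term) that the statement is comfortably true.
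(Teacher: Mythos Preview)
Your proof is correct and follows exactly the route the paper indicates: the lemma is obtained by combining \eqref{hopq}, \eqref{R1nogreater}, and \eqref{R2eq}, and you have carried out the substitution and coefficient-checking that the paper leaves implicit. The verification that the residual inequality reduces to $2P_1 Q_1 + \tfrac{3}{2}P_2^2 \ge 0$ is accurate.
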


The following proposition (see {\cite{MR1289187,MR1633163}}) will be
used to estimate a cubic polynomial of $h_{i j}^{\alpha}, \, 1\le
i,j\le n,\, 1\le \alpha \le q.$

\begin{proposition}\label{lao3}
  Let $a_{1} , \cdots ,a_{n}$ and $b_{1} , \cdots ,b_{n}$ be real numbers
  satisfying $\sum_{i} a_{i} = \sum_{i} b_{i} =0$. Then
  \[ \left| \sum_{i} a_{i} b_{i}^{2} \right| \leqslant \frac{n-2}{\sqrt{n (
     n-1 )}} \left( \sum_{i} a_{i}^{2} \right)^{\frac{1}{2}} \left( \sum_{i}
     b_{i}^{2} \right) , \]
  where equality holds if and only if $\sum_{i} a_{i}^{2} =0$, or $\sum_{i}
  b_{i}^{2} =0$, or at least $n-1$ pairs of numbers of $( a_{i} ,b_{i} )$ are
  equal.

  In particular, we have
  \[ \left| \sum_{i} a_{i}^{3} \right| \leqslant \frac{n-2}{\sqrt{n ( n-1 )}}
     \left( \sum_{i} a_{i}^{2} \right)^{\frac{3}{2}} , \]
  where equality holds if and only if at least $n-1$ numbers of $a_{i}$ are
  equal.
\end{proposition}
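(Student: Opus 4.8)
The plan is to peel the bilinear statement down to a single sharp power‑sum inequality in the $b_i$ by a Cauchy--Schwarz step that uses $\sum_i a_i=0$, and then to settle that inequality by Lagrange multipliers. For Step~1 note that the inequality is homogeneous and trivial when $\sum_i a_i^2=0$ or $\sum_i b_i^2=0$, so I would normalise $\sum_i a_i^2=\sum_i b_i^2=1$. Since $\sum_i a_i=0$ I may replace $b_i^2$ by $b_i^2-\tfrac1n$ without changing $\sum_i a_ib_i^2$, and now $(a_i)$ and $(b_i^2-\tfrac1n)$ both lie in the hyperplane $\{\sum_i x_i=0\}$, so Cauchy--Schwarz there gives
\[
\Big|\sum_i a_ib_i^2\Big|\le\Big(\sum_i a_i^2\Big)^{1/2}\Big(\sum_i\big(b_i^2-\tfrac1n\big)^2\Big)^{1/2}=\Big(\sum_i b_i^4-\tfrac1n\Big)^{1/2},
\]
with equality exactly when $(a_i)$ is proportional to $\big(b_i^2-\tfrac1n\sum_j b_j^2\big)$. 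Since $\frac{n^2-3n+3}{n(n-1)}-\frac1n=\frac{(n-2)^2}{n(n-1)}$, it then suffices to prove the sharp bound $\sum_i b_i^4\le\frac{n^2-3n+3}{n(n-1)}\big(\sum_i b_i^2\big)^2$ whenever $\sum_i b_i=0$.

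For Step~2 I would, after normalising $\sum_i b_i^2=1$, maximise $\sum_i b_i^4$ over the compact set $\{\sum_i b_i=0,\ \sum_i b_i^2=1\}$. The Lagrange equations at a maximiser read $4b_i^3=2\mu b_i+\lambda$ for all $i$, so every $b_i$ is a root of the single cubic $4x^3-2\mu x-\lambda$; this has at most three real roots, and, lacking a quadratic term, their sum is $0$. Hence $b$ takes at most three values $p,q,r$ with $p+q+r=0$ and integer multiplicities $k,l,m\ge0$ summing to $n$, and imposing $kp+lq+mr=0$ together with $kp^2+lq^2+mr^2=1$ reduces everything to a finite comparison. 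I expect it to come out as follows: a two‑value configuration with multiplicities $(k,n-k)$ gives $\sum_i b_i^4=\frac{n}{k(n-k)}-\frac3n$, which is largest at $k\in\{1,n-1\}$ with value $\frac{n^2-3n+3}{n(n-1)}$, while each genuine three‑value critical configuration is pinned down by the relations above and turns out to give no larger a value. This proves the bound, with equality precisely when $n-1$ of the $b_i$ coincide.

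Step~3 assembles the two pieces. If equality holds in the Proposition (with $\sum_i a_i^2,\sum_i b_i^2\neq0$), both the Cauchy--Schwarz and the power‑sum steps are tight; the latter forces $n-1$ of the $b_i$ to agree, say $b_1=\dots=b_{n-1}$, whence $b_1^2-\tfrac1n\sum_j b_j^2=\dots=b_{n-1}^2-\tfrac1n\sum_j b_j^2$, so proportionality from Step~1 forces $a_1=\dots=a_{n-1}$; thus at least $n-1$ of the pairs $(a_i,b_i)$ coincide. Conversely, if $n-1$ of the pairs agree, then $\sum_i a_i=\sum_i b_i=0$ determine the remaining pair and a one‑line substitution verifies equality (the cases $\sum_i a_i^2=0$ or $\sum_i b_i^2=0$ being trivial). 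Finally, putting $b_i=a_i$ yields $\big|\sum_i a_i^3\big|\le\frac{n-2}{\sqrt{n(n-1)}}\big(\sum_i a_i^2\big)^{3/2}$, with equality iff $n-1$ of the $a_i$ are equal.

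The main obstacle is Step~2's sharp constant and its equality case: the Lagrange conditions cut the problem down to "at most three values summing to zero" in a single stroke, but actually verifying that the two‑value $(n-1,1)$‑split dominates every other critical configuration — in particular the genuine three‑value ones — requires a concrete if elementary case check, and it is exactly here that the hypothesis $\sum_i b_i=0$ is indispensable, since without it $\sum_i b_i^4$ can reach $\big(\sum_i b_i^2\big)^2$.
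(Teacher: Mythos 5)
The paper does not prove Proposition~\ref{lao3}: it cites \cite{MR1289187,MR1633163} and uses only the inequality, for $n\ge 6$. So there is no in-paper proof to compare against, and I evaluate your proposal on its own.

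Your Step~1 is correct and clean: with $\sum_i a_i=0$ one may replace $b_i^2$ by $b_i^2-\frac{1}{n}\sum_j b_j^2$, whereupon Cauchy--Schwarz on the hyperplane $\{\sum_i x_i=0\}$ reduces the bilinear claim to the fourth-power-sum inequality
\[
\sum_i b_i^4\le\frac{n^2-3n+3}{n(n-1)}\Big(\sum_i b_i^2\Big)^2\qquad\text{whenever }\sum_i b_i=0,
\]
together with the exact Cauchy--Schwarz equality condition. The arithmetic $\frac{1}{n}+\frac{(n-2)^2}{n(n-1)}=\frac{n^2-3n+3}{n(n-1)}$ checks out.

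The genuine gap, which you yourself flag, is Step~2. Observing that the Lagrange condition $4b_i^3=2\mu b_i+\lambda$ forces the $b_i$ into at most three values $p,q,r$ with $p+q+r=0$, and then asserting that the three-value critical configurations ``turn out to give no larger a value,'' is precisely where the Proposition lives; as written it is a claim, not a proof. It is also not quite true as stated: for $n=3$ every $b$ with $\sum b_i=0$ gives $\sum b_i^4=\frac{1}{2}\big(\sum b_i^2\big)^2$, so three-value configurations do attain the maximum without $n-1$ of the $b_i$ coinciding, and accordingly the Proposition's equality clause actually fails at $n=3$ (take $b=(1,0,-1)$ and $a$ proportional to $(b_i^2-\frac{2}{3})$: equality holds, yet no two pairs $(a_i,b_i)$ agree). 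For $n\ge4$ the conclusion you want is true, but one must earn it. One route that fits your framework is a second-order argument: since $\sum_i b_i^4\le(\max_j b_j^2)\sum_i b_i^2$, the second-order stationarity condition $3\nu^2\le\sum_j b_j^4$ (obtained by varying two entries sharing the value $\nu$) forces the value of largest modulus to have multiplicity one; then writing the three values with multiplicities $1,l,m$, the identities $p+q+r=0$ and $p+lq+mr=0$ give $(l-1)q+(m-1)r=0$, hence $l,m\ge2$, and parametrising $q=(m-1)s$, $r=-(l-1)s$, $p=(l-m)s$ shows $|q|$ or $|r|$ exceeds $|p|$, a contradiction. This eliminates genuine three-value critical points for $n\ge4$ and reduces the extremal problem to two-value configurations, where your formula $\sum_i b_i^4=\frac{n}{k(n-k)}-\frac{3}{n}$ (which is correct) gives the maximum at $k\in\{1,n-1\}$. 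Without some argument of this kind, Step~2 is incomplete and the Proposition is not established. Finally, note that the ``in particular'' cubic case could also be handled directly, since there the Lagrange condition is quadratic and forces at most two values, bypassing the three-value issue entirely.
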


\section{Preservation of curvature pinching}

Let $F: M \times [ 0,T ) \rightarrow \mathbb{H}^{n+q} ( c )$ be a
mean curvature flow in the hyperbolic space $\mathbb{H}^{n+q} ( c
)$. Let $M_{t} =F ( M,t )$. Suppose that $M_{0}$ is an
$n$-dimensional $( n \ge 6 )$ complete submanifold satisfying $\sup
( | h |^{2} - \alpha ( n, | H | ,c ) ) <0$ and $| H |^{2} +n^{2}
c>0$.

The evolution equations of the mean curvature flow take the same form as in
{\cite{baker2011mean,MR3078951}}.

\begin{lemma}
  \label{evo}For the mean curvature flow $F: M \times [ 0,T ) \rightarrow
  \mathbb{H}^{n+q} ( c )$, we have
  \begin{enumerateroman}
    \item $\dt | h |^{2} = \Delta | h |^{2} -2 | \nabla h |^{2} +2R_{1} +4 c |
    H |^{2} -2 n c | h |^{2}$,

    \item $\dt | H |^{2} = \Delta | H |^{2} -2 | \nabla H |^{2} +2 R_{2} +2n c
    | H |^{2}$,

    \item $\dt \normho^{2} = \Delta \normho^{2} -2 | \nabla h |^{2} +
    \frac{2}{n} | \nabla H |^{2} +2R_{1} - \frac{2}{n} R_{2} -2 n c
    \normho^{2}$.
  \end{enumerateroman}
\end{lemma}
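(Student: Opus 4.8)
The plan is to obtain (i) and (ii) directly from the standard time-derivative formulas for the induced metric and the second fundamental form under mean curvature flow, specialized to an ambient space form, and then to deduce (iii) as a formal linear combination. The computation is essentially the one carried out in \cite{baker2011mean,MR3078951}; I indicate the main steps. First recall that $\dt F=H$ forces $\dt g_{ij}=-2 \langle H,h_{ij} \rangle$, hence $\dt g^{ij}=2 \langle H,h^{ij} \rangle$ with indices raised by $g$. In an arbitrary Riemannian ambient manifold the second fundamental form satisfies an equation of the schematic form $\dt h^{\alpha}_{ij}= \Delta h^{\alpha}_{ij}+ h \ast h \ast h + \bar{R} \ast h + \bar{\nabla} \bar{R}$, where $\bar{R}$ is the ambient curvature. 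Specializing to $\mathbb{H}^{n+q}(c)$, whose curvature tensor $\bar{R}_{ABCD}=c( \bar{g}_{AC} \bar{g}_{BD}- \bar{g}_{AD} \bar{g}_{BC} )$ is parallel, kills the $\bar{\nabla} \bar{R}$ term and collapses every contraction of $\bar{R}$ against tangent and normal frame vectors to an expression in $c$, $g$ and $h$ via the Gauss and Weingarten formulas. In the same way Simons' identity takes the form $\Delta h^{\alpha}_{ij}= \nabla^{2}_{ij}H^{\alpha}+ (h \ast h \ast h) + c(\ast)$ with explicit $c$-dependent terms.

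Next compute $\dt | h |^{2}= \dt ( g^{ik}g^{jl}h^{\alpha}_{ij}h^{\alpha}_{kl} )$. The two metric factors produce terms of the form $\langle H,h_{ik} \rangle h^{\alpha}_{ij}h^{\alpha}_{jk}$ which combine with the cubic reaction terms of $\dt h$, and the Laplacian term reorganizes, via the Bochner-type manipulation using Simons' identity, into $\Delta | h |^{2}-2 | \nabla h |^{2}$. Collecting the quartic-in-$h$ contributions reproduces exactly $2R_{1}$ with $R_{1}$ as in Section 2, while the $c$-dependent contributions — one batch from Simons' identity contracted with $h$, one from the $\bar{R} \ast h$ term in $\dt h$ — add up to $4c | H |^{2}-2nc | h |^{2}$; this proves (i). For (ii), starting from $\dt H^{\alpha}= \Delta H^{\alpha}+ \sum_{i,j} \sum_{\beta} H^{\beta}h^{\beta}_{ij}h^{\alpha}_{ij}+nc\,H^{\alpha}$ (the $c$-term again read off from the space-form curvature) and pairing with $H$ gives $\dt | H |^{2}= \Delta | H |^{2}-2 | \nabla H |^{2}+2R_{2}+2nc | H |^{2}$, since the reaction term contracted with $H$ equals $R_{2}+nc | H |^{2}$ by the definition of $R_{2}$.

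Finally (iii) is purely formal. Since $\normho^{2}= | h |^{2}- \tfrac{1}{n} | H |^{2}$ as a function on $M$, and both $\dt$ and $\Delta$ are linear, subtracting $\tfrac{1}{n}$ times (ii) from (i) yields
\[ \dt \normho^{2}= \Delta \normho^{2}-2 | \nabla h |^{2}+ \tfrac{2}{n} | \nabla H |^{2}+2R_{1}- \tfrac{2}{n}R_{2}+4c | H |^{2}-2nc | h |^{2}-2c | H |^{2}, \]
and $4c | H |^{2}-2c | H |^{2}-2nc | h |^{2}=-2nc ( | h |^{2}- \tfrac{1}{n} | H |^{2} ) =-2nc \normho^{2}$, which is the assertion. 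The main obstacle is the bookkeeping in the middle step: one must track how the metric-evolution terms interact with the cubic reaction terms so as to recover precisely $R_{1}$ and $R_{2}$, and separately keep the $c$-dependent pieces — coming both from Simons' identity and from the ambient-curvature terms in $\dt h$ and $\dt H$ — organized so that the coefficients $4c$, $-2nc$ and $2nc$ come out correctly; everything else is routine tensor calculus in a space form.
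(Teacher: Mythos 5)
Your proposal is correct and follows the same route the paper implicitly takes: the paper itself does not reprove the evolution equations but simply remarks that they "take the same form as in" Baker's thesis and Liu--Xu--Ye--Zhao, which is exactly what you appeal to for (i) and (ii). The one piece you carry out explicitly that the paper leaves tacit is the derivation of (iii) from (i) and (ii) by linearity, and your arithmetic there is right: subtracting $\tfrac{1}{n}$ times (ii) from (i) produces the extra scalar terms $4c|H|^{2}-2nc|h|^{2}-2c|H|^{2}=2c|H|^{2}-2nc|h|^{2}=-2nc\bigl(|h|^{2}-\tfrac{1}{n}|H|^{2}\bigr)=-2nc\,\normho^{2}$, matching the statement.
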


We define a function $\mathring{\alpha} : ( -n^{2} c,+ \infty ) \rightarrow
\mathbb{R}$ by
\begin{equation}
  \mathring{\alpha} ( y ) =n c+ \frac{n^{2} -2n+2}{2 ( n-1 ) n}  y-
  \frac{n-2}{2 ( n-1 )} \sqrt{y^{2} +4 ( n-1 ) c y} .
\end{equation}
It's obvious that $\mathring{\alpha} ( | H |^{2} ) = \alpha ( n, | H | ,c ) -
\frac{1}{n} | H |^{2}$. Moreover, we have the following lemma.

\begin{lemma}
  $\label{app}$For $n \ge 6$, $c<0$ and $y>-n^{2} c$,
  $\mathring{\alpha}$ has the following properties.
  \begin{enumerateroman}
    \item $y  \mathring{\alpha}' ( y ) \cdot \left( \mathring{\alpha} ( y ) +
    \frac{1}{n} y+n c \right) \equiv \mathring{\alpha} ( y ) \cdot \left(
    \mathring{\alpha} ( y ) + \frac{1}{n} y-n c \right)$,

    \item $\frac{n-2}{\sqrt{n ( n-1 )}} \sqrt{y  \mathring{\alpha} ( y )}
    \equiv \frac{1}{n} y- \mathring{\alpha} ( y ) +n c$,

    \item $0< \mathring{\alpha} ( y ) < \frac{y+n^{2} c}{n ( n-1 )}$, \ $0<
    \mathring{\alpha}' ( y ) < \frac{1}{n ( n-1 )}$, \ $\mathring{\alpha}'' (
    y ) >0$,

    \item $2 \sqrt{y}   \mathring{\alpha}' ( y ) < \sqrt{\mathring{\alpha} ( y
    )}$,

    \item $y  \mathring{\alpha}' ( y ) > \mathring{\alpha} ( y )$,

    \item $2y  \mathring{\alpha}'' ( y ) + \mathring{\alpha}' ( y ) < \frac{2
    ( n-1 )}{n ( n+2 )}$.
  \end{enumerateroman}
\end{lemma}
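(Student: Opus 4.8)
The plan is to uniformize the graph of $\mathring{\alpha}$ by a rational parameter. For $t>0$ set $r=\sqrt{t^{2}-4c}$ (so $0<t<r$ since $c<0$) and $y=y(t):=\bigl(\tfrac{(n-2)t+nr}{2}\bigr)^{2}$. From $\tfrac{dr}{dt}=\tfrac{t}{r}$ one gets $\tfrac{dy}{dt}=\tfrac{((n-2)t+nr)(nt+(n-2)r)}{2r}>0$, while $y(t)\to-n^{2}c$ as $t\to0^{+}$ and $y(t)\to\infty$ as $t\to\infty$; hence $t\mapsto y(t)$ is an increasing smooth bijection of $(0,\infty)$ onto $(-n^{2}c,\infty)$. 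The crucial algebraic step is to check, using $-4c=r^{2}-t^{2}$, the two identities
\[ y+4(n-1)c=\Bigl(\tfrac{nt+(n-2)r}{2}\Bigr)^{2},\qquad \sqrt{y^{2}+4(n-1)cy}=\tfrac14\,((n-2)t+nr)(nt+(n-2)r); \]
feeding these into the definition of $\mathring{\alpha}$ and simplifying collapses it to $\mathring{\alpha}(y)=\tfrac{n-1}{n}t^{2}$. Differentiating this relation and dividing by $\tfrac{dy}{dt}$ gives $\mathring{\alpha}'(y)=\tfrac{4(n-1)tr}{n((n-2)t+nr)(nt+(n-2)r)}>0$. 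Two further short computations from $y=y(t)$ record
\[ \sqrt{y}=\tfrac{(n-2)t+nr}{2},\qquad \mathring{\alpha}(y)+\tfrac{1}{n}y+nc=\tfrac{t(nt+(n-2)r)}{2},\qquad \mathring{\alpha}(y)+\tfrac{1}{n}y-nc=\tfrac{r((n-2)t+nr)}{2}, \]
whence $y\mathring{\alpha}'(y)=\tfrac{(n-1)tr((n-2)t+nr)}{n(nt+(n-2)r)}$ and $\tfrac{1}{n}y-\mathring{\alpha}(y)+nc=\tfrac{(n-2)t((n-2)t+nr)}{2n}$.

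With these formulas items (i)--(v) become elementary. In (i), cancelling the common factor $nt+(n-2)r$, both sides equal $\tfrac{(n-1)t^{2}r((n-2)t+nr)}{2n}$. In (ii), both sides equal $\tfrac{(n-2)t((n-2)t+nr)}{2n}$, the positive branch of the square root being forced since this is visibly positive. In (v), $y\mathring{\alpha}'(y)>\mathring{\alpha}(y)$ reduces to $r^{2}>t^{2}$. In (iv), since $2\sqrt{y}\,\mathring{\alpha}'(y)=\tfrac{4(n-1)tr}{n(nt+(n-2)r)}$ and $\sqrt{\mathring{\alpha}(y)}=\sqrt{(n-1)/n}\,t$, the claim is equivalent to $4\sqrt{n(n-1)}\,r<n^{2}t+n(n-2)r$, which follows from $t>0$ together with $4\sqrt{n(n-1)}\le n(n-2)$, i.e.\ $16(n-1)\le n(n-2)^{2}$ --- valid precisely for $n\ge6$, the first place the hypothesis is used. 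In (iii), $\mathring{\alpha}(y)=\tfrac{n-1}{n}t^{2}>0$ is trivial; the upper bounds $\mathring{\alpha}(y)<\tfrac{y+n^{2}c}{n(n-1)}$ and $\mathring{\alpha}'(y)<\tfrac{1}{n(n-1)}$ reduce to $r>t$ and, via $t^{2}+r^{2}\ge2tr$, to $n(n-2)(t-r)^{2}>0$; and $\mathring{\alpha}''(y)>0$ is cleanest from the explicit formula, since differentiating it twice and using $4w-(w')^{2}=-16(n-1)^{2}c^{2}$ for $w=y^{2}+4(n-1)cy$ yields $\mathring{\alpha}''(y)=\tfrac{2(n-2)(n-1)c^{2}}{w^{3/2}}>0$.

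The remaining estimate (vi) is the one genuine computation, and I expect it to be the main obstacle. Converting $\mathring{\alpha}''$ to the parameter via $w^{1/2}=\tfrac14((n-2)t+nr)(nt+(n-2)r)$ and $c^{2}=\tfrac{(r^{2}-t^{2})^{2}}{16}$ gives
\[ 2y\mathring{\alpha}''(y)+\mathring{\alpha}'(y)=\frac{4(n-1)\bigl[\,n(n-2)(r^{2}-t^{2})^{2}+tr(nt+(n-2)r)^{2}\,\bigr]}{n\,((n-2)t+nr)(nt+(n-2)r)^{3}}, \]
so that, writing $x=t/r\in(0,1)$ and clearing the positive denominators, (vi) is equivalent to the one-variable polynomial inequality $\Phi_{n}(x)>0$ with $\Phi_{n}(x)=((n-2)x+n)(nx+n-2)^{3}-2(n+2)x(nx+n-2)^{2}-2n(n+2)(n-2)(1-x^{2})^{2}$. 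Expanding,
\[ \Phi_{n}(x)=n(n-2)(n^{2}-2n-4)x^{4}+2n^{2}(2n^{2}-7n+4)x^{3}+6n(n-2)(n^{2}-2n+2)x^{2}+2n(n-2)^{2}(2n-3)x+n^{2}(n-2)(n-6), \]
and for $n\ge6$ every coefficient is nonnegative --- the constant term $n^{2}(n-2)(n-6)$ being the second place where $n\ge6$ enters, and the linear coefficient $2n(n-2)^{2}(2n-3)$ being strictly positive. Hence $\Phi_{n}(x)\ge2n(n-2)^{2}(2n-3)x>0$ for every $x>0$, proving (vi) strictly (the open domain $(-n^{2}c,\infty)$ corresponding to $x\in(0,1)$). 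Collecting (i)--(vi) completes the proof.
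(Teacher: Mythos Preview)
Your proof is correct. Both you and the paper proceed by uniformizing with a single parameter, but the choices are different: the paper sets $\xi=y/\sqrt{y^{2}+4(n-1)cy}\in(1,\tfrac{n}{n-2})$, whereas you introduce $(t,r)$ with $r=\sqrt{t^{2}-4c}$ and $y=\bigl(\tfrac{(n-2)t+nr}{2}\bigr)^{2}$; the two are linked by the M\"obius transformation $\xi=\frac{(n-2)x+n}{nx+(n-2)}$ with $x=t/r$. Your parametrization has the virtue of making $\mathring{\alpha}=\tfrac{n-1}{n}t^{2}$ completely explicit, so identities (i)--(ii) and inequalities (iii)--(v) become one-line verifications; the paper's $\xi$ leads to terser expressions overall and in particular dispatches (vi) in a single line via the closed form
\[
2y\mathring{\alpha}''+\mathring{\alpha}'+\tfrac{1}{n}=\tfrac{n}{2(n-1)}+\tfrac{n-2}{4(n-1)}(\xi^{3}-3\xi)<\tfrac{n}{(n-2)^{2}}\le\tfrac{3}{n+2},
\]
avoiding your quartic expansion $\Phi_{n}(x)$. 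Your route for (vi) is longer but perfectly valid (I checked the coefficients), and it makes transparent exactly where $n\ge6$ enters --- once in (iv) via $16(n-1)\le n(n-2)^{2}$, and once in the constant term $n^{2}(n-2)(n-6)$ of $\Phi_{n}$.
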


\begin{proof}
  By direct computations, we get
  \[ \mathring{\alpha}' ( y ) = \frac{n^{2} -2n+2}{2 ( n-1 ) n} - \frac{n-2}{2
     ( n-1 )}   \frac{y+2 ( n-1 ) c}{\sqrt{y^{2} +4 ( n-1 ) c y}} ,
     \hspace{1em} \mathring{\alpha}'' ( y ) = \frac{2 ( n-1 ) ( n-2 ) c^{2}}{(
     y^{2} +4 ( n-1 ) c y )^{3/2}} . \]
  \[  \]
  We use a variable substitution $\xi = \frac{y}{\sqrt{y^{2} +4 ( n-1 ) c y}}$
  to simplify formulas. Then $y>-n^{2} c$ implies $1< \xi < \frac{n}{n-2}$.
  Hence $y= \frac{4 ( n-1 ) c}{\xi^{-2} -1}$ and $\sqrt{y^{2} +4 ( n-1 ) c y}
  = \frac{4 ( n-1 ) c}{\xi^{-1} - \xi}$.

  With this variable substitution, one can verify (i) and (ii) easily.

  For the rest, we have
  \[ \frac{y+n^{2} c}{\mathring{\alpha} ( y )} = \frac{2n^{2}}{n- ( n-2 ) \xi}
   -n>n ( n-1 ) , \]
  \[ \frac{2 \sqrt{y}   \mathring{\alpha}' ( y )}{\sqrt{\mathring{\alpha} ( y
     )}} = \frac{n ( \xi -1 ) +2}{\sqrt{n ( n-1 )}} < \frac{4}{n-2}
     \sqrt{\frac{n-1}{n}} <1, \]
  \[ y  \mathring{\alpha}' ( y ) - \mathring{\alpha} ( y ) = [ ( n-2 ) \xi -n ]
     c>0, \]
  and
  \[ 2y  \mathring{\alpha}'' ( y ) + \mathring{\alpha}' ( y ) + \frac{1}{n} =
     \frac{n}{2 ( n-1 )} + \frac{n-2}{4 ( n-1 )} ( \xi^{3} -3 \xi ) < \frac{n}{(
     n-2 )^{2}} \leqslant \frac{3}{n+2} . \]

\end{proof}

For convenience, we denote $\mathring{\alpha} ( | H |^{2} )$,
$\mathring{\alpha}' ( | H |^{2} )$ and $\mathring{\alpha}'' ( | H
|^{2} )$ by $\mathring{\alpha}$, $\mathring{\alpha}'$ and
$\mathring{\alpha}''$, respectively. Then we get the evolution
equation of $\mathring{\alpha}$.
\begin{equation}
  \dt \mathring{\alpha} = \Delta \mathring{\alpha} +2 \mathring{\alpha}' \cdot
  ( - | \nabla H |^{2} +R_{2} +n c | H |^{2} ) - \mathring{\alpha}'' \cdot |
  \nabla | H |^{2} |^{2} .
\end{equation}

It's seen from Theorem 1 of {\cite{MR1458750}} that $M_{0}$ is
compact. Hence there exists a small positive number $\varepsilon$,
such that $M_{0}$ satisfies
\begin{equation}
  \normho^{2} < \mathring{\alpha} - \varepsilon   \omega , \hspace{2em}
  \tmop{where} \hspace{1em} \omega = | H |^{2} +4 ( n-1 ) c.
\end{equation}
In the following we prove that the pinching condition above is
preserved along the flow.

\begin{theorem}
  \label{pinch}If $M_{0}$ satisfies $\normho^{2} < \mathring{\alpha} -
  \varepsilon   \omega$ and $| H |^{2} +n^{2} c>0$, then this condition holds
  for all time $t \in [ 0,T )$.
\end{theorem}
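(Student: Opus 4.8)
The plan is to apply a parabolic maximum principle to the quantity $\normho^{2}-\mathring{\alpha}+\varepsilon\omega$, and the first order of business is to establish that $|H|^{2}+n^{2}c>0$ is itself preserved, since this is what keeps $\mathring{\alpha}$ and all of Lemma~\ref{app} meaningful along the flow. For this I would note that the pinching inequality $\normho^{2}<\mathring{\alpha}$ together with part (iii) of Lemma~\ref{app} forces $|h|^{2}=\normho^{2}+\frac{1}{n}|H|^{2}$ to be controlled, and in particular the evolution equation Lemma~\ref{evo}(ii) for $|H|^{2}$, together with $R_{2}\ge 0$, shows that at a spatial minimum of $|H|^{2}+n^{2}c$ one has $\dt(|H|^{2}+n^{2}c)\ge \Delta(|H|^{2}+n^{2}c)+2nc\,|H|^{2}$; one then checks using the pinching that the zero-order term cannot drive $|H|^{2}+n^{2}c$ down through $0$ in finite time. (Alternatively, and more cleanly, the combined pinching set $\{\normho^{2}<\mathring{\alpha}-\varepsilon\omega,\ |H|^{2}+n^{2}c>0\}$ is treated as a single convex-in-the-right-sense region and one runs the vectorial maximum principle of Hamilton/Andrews on it; but the scalar argument is enough here.)

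With positivity of $|H|^{2}+n^{2}c$ in hand on a maximal subinterval, I would set $G=\normho^{2}-\mathring{\alpha}+\varepsilon\omega$ and compute $\dt G-\Delta G$ using Lemma~\ref{evo}(iii) for $\normho^{2}$, the displayed evolution equation for $\mathring{\alpha}$, and Lemma~\ref{evo}(ii) for $\omega=|H|^{2}+4(n-1)c$. The reaction terms that survive are, schematically,
\[
\dt G-\Delta G = -2|\nabla h|^{2}+\tfrac{2}{n}|\nabla H|^{2}+2\mathring{\alpha}'|\nabla H|^{2}+\mathring{\alpha}''\,\bigl|\nabla|H|^{2}\bigr|^{2}-2\varepsilon|\nabla H|^{2}
\]
\[
\qquad\qquad +2R_{1}-\tfrac{2}{n}R_{2}-2nc\normho^{2}-2\mathring{\alpha}'(R_{2}+nc|H|^{2})+2\varepsilon R_{2}+2nc\varepsilon|H|^{2}.
\]
The gradient terms must be shown nonpositive: using Lemma~\ref{dA2}(i) to bound $-2|\nabla h|^{2}\le -\frac{6}{n+2}|\nabla H|^{2}$ and Lemma~\ref{dA2}(ii) to bound $\mathring{\alpha}''\bigl|\nabla|H|^{2}\bigr|^{2}\le 4\mathring{\alpha}''|H|^{2}|\nabla H|^{2}$, the total gradient coefficient is at most $\bigl(-\frac{6}{n+2}+\frac{2}{n}+2\mathring{\alpha}'+4|H|^{2}\mathring{\alpha}''-2\varepsilon\bigr)|\nabla H|^{2}$, and this is $\le -2\varepsilon|\nabla H|^{2}<0$ precisely because Lemma~\ref{app}(vi) gives $2|H|^{2}\mathring{\alpha}''+\mathring{\alpha}'<\frac{2(n-1)}{n(n+2)}$, i.e. $\frac{2}{n}+2\mathring{\alpha}'+4|H|^{2}\mathring{\alpha}''<\frac{6}{n+2}$. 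So the gradient terms are harmless.

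The crux is the zero-order part. At a first point and time where $G=0$ we have $\normho^{2}=\mathring{\alpha}-\varepsilon\omega$, and I must show the reaction terms are strictly negative there. Substituting Lemma~\ref{R12}(i)--(ii) for $R_{1}-\frac{1}{n}R_{2}$ and $R_{2}$, and using the two algebraic identities Lemma~\ref{app}(i)--(ii) relating $\mathring{\alpha}$, $\mathring{\alpha}'$, $y=|H|^{2}$ and $c$ (these are exactly what make $\normho^{2}=\mathring{\alpha}$ a stationary solution when $\varepsilon=0$), the $\varepsilon=0$ zero-order expression should collapse to something manifestly nonpositive — a multiple of $-P_{2}$ times a positive factor, plus terms that vanish by the identities — reflecting that $\normho^{2}=\mathring{\alpha}$ is borderline preserved; then the $O(\varepsilon)$ correction $2\varepsilon R_{2}+2nc\varepsilon|H|^{2}-(\text{terms from }-\varepsilon\omega\text{ in }\normho^{2})$ must be shown to have a definite sign, using $|H|^{2}+n^{2}c>0$, $c<0$, and Lemma~\ref{app}(iii)--(v) to control $\mathring{\alpha}'$ and $\mathring{\alpha}$ in terms of $|H|^{2}$. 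I expect this $\varepsilon$-bookkeeping, combined with correctly identifying the sign of the $P_{2}$-coefficient (which should come out negative via $\mathring{\alpha}'<\frac{1}{n(n-1)}$ and $\frac{1}{n}|H|^{2}-\mathring{\alpha}+nc=\frac{n-2}{\sqrt{n(n-1)}}\sqrt{|H|^{2}\mathring{\alpha}}>0$ from (ii)), to be the main obstacle; everything else is a maximum-principle formality. Once the reaction term is strictly negative on $\{G=0\}$, the standard parabolic maximum principle (applicable since $M_{t}$ is compact) gives $G<0$ for all $t\in[0,T)$, which is the assertion of the theorem.
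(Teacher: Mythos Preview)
Your overall strategy---compute $(\partial_t-\Delta)$ of $U=\normho^{2}-\mathring{\alpha}+\varepsilon\omega$, kill the gradient terms via Lemma~\ref{dA2} and Lemma~\ref{app}(vi), and then use Lemma~\ref{R12} together with the algebraic identities Lemma~\ref{app}(i)--(iii) to show the reaction term has the right sign---is exactly the paper's approach, and your identification of the $P_{2}$-coefficient and the $\varepsilon$-bookkeeping as the crux is correct. The paper carries this out by substituting $\normho^{2}=U+\mathring{\alpha}-\varepsilon\omega$ \emph{globally} (not only at a first touching point) and regrouping so that the non-$U$ part of the reaction splits into four brackets: one vanishing identically by Lemma~\ref{app}(i), the $P_{2}$-bracket negative by Lemma~\ref{app}(iii), an $\varepsilon\omega$-bracket shown negative by an explicit computation giving $\bigl(\tfrac{3(n-2)|H|^{2}}{\sqrt{|H|^{4}+4(n-1)c|H|^{2}}}-n\bigr)c<0$, and a trivially negative $-2\varepsilon^{2}\omega(|H|^{2}-\omega)$ term. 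This yields $(\partial_t-\Delta)U<2U[\cdots]+2U^{2}$, whence the maximum principle.

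One point where your write-up overshoots: the preservation of $|H|^{2}+n^{2}c>0$ does \emph{not} need a separate evolution argument, and your sketch using only $R_{2}\ge 0$ would not close (the resulting inequality $\partial_t|H|^{2}_{\min}\ge 2nc\,|H|^{2}_{\min}$ with $c<0$ only gives exponential decay, which can cross $-n^{2}c$). The paper's observation is much simpler: since $\mathring{\alpha}(y)\to 0$ as $y\to -n^{2}c$, while $\omega=|H|^{2}+4(n-1)c>0$ there, the pinching $\normho^{2}<\mathring{\alpha}-\varepsilon\omega$ itself forces $\mathring{\alpha}>\varepsilon\omega>0$ and hence keeps $|H|^{2}+n^{2}c$ bounded away from zero automatically; no maximum principle for $|H|^{2}$ is needed. (If you do want a direct argument, use $R_{2}\ge\frac{1}{n}|H|^{4}$ from Lemma~\ref{R12}(ii), which gives $\partial_t|H|^{2}_{\min}\ge\frac{2}{n}|H|^{2}_{\min}(|H|^{2}_{\min}+n^{2}c)$ and hence preservation.)
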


\begin{proof}
  Suppose $\normho^{2} < \mathring{\alpha} - \varepsilon   \omega$ remains
  true for $t \in [ 0, \tau )$. Note that $\mathring{\alpha} \rightarrow 0$ as $|
  H |^{2} \rightarrow -n^{2} c$. This implies that $| H |^{2} +n^{2} c>0$ also remains true
  for $t \in [ 0, \tau )$. On the time interval $[ 0, \tau )$, we have the following
  evolution equation for $U= \normho^{2} - \mathring{\alpha} + \varepsilon
  \omega$.
  \begin{eqnarray}
    \left( \dt - \Delta \right) U & = & -2 | \nabla h |^{2} + \frac{2}{n} |
    \nabla H |^{2} +2 \left( \mathring{\alpha}' - \varepsilon \right) | \nabla
    H |^{2} + \mathring{\alpha}'' \cdot | \nabla | H |^{2} |^{2} \nonumber\\
    &  & +2R_{1} - \frac{2}{n} R_{2} -2 n c \normho^{2} -2 \left(
    \mathring{\alpha}' - \varepsilon \right) ( R_{2} +n c | H |^{2} ) .
    \label{dtU}
  \end{eqnarray}
  By Lemma \ref{dA2} and Lemma \ref{app} (vi), the first line of the right
  hand side of (\ref{dtU}) is not greater than
  \[ \left[ - \frac{2 ( n-1 )}{n ( n+2 )} + \mathring{\alpha}' +2 | H |^{2}
     \mathring{\alpha}''   \right] | \nabla H |^{2} \le 0. \]
  By Lemma \ref{R12}, the second line of the right hand side of (\ref{dtU}) is
  not greater than
  \begin{eqnarray*}
    &  & 2 \normho^{2} \left( \normho^{2} + \frac{1}{n} | H |^{2} - n c
    \right) +2P_{2} \left( 2 \normho^{2} - \frac{1}{n} | H |^{2} \right)\\
    &  & -2 \left( \mathring{\alpha}' - \varepsilon \right)   | H |^{2}
    \left( \normho^{2} + \frac{1}{n} | H |^{2} +n c \right) +2 \left(
    \mathring{\alpha}' - \varepsilon \right)   | H |^{2} P_{2} .
  \end{eqnarray*}
  Replacing $\normho^{2}$ by $U+ \mathring{\alpha} - \varepsilon   \omega$,
  the right hand side of the inequality above becomes
  \begin{eqnarray}
    &  & 2U \left[ 2 \mathring{\alpha} + \frac{1}{n} | H |^{2} -n c -
    \mathring{\alpha}' | H |^{2}  +2P_{2} + \varepsilon ( | H |^{2} -2 \omega
    ) \right] +2U^{2} \nonumber\\
    &  & +2 \left[ \mathring{\alpha} \cdot \left( \mathring{\alpha} +
    \frac{1}{n} | H |^{2} -n c \right) - \mathring{\alpha}' | H |^{2} \cdot
    \left( \mathring{\alpha} + \frac{1}{n} | H |^{2} +n c \right) \right]
    \nonumber\\
    &  & +2P_{2} \left[ 2 \mathring{\alpha} - \frac{1}{n} | H |^{2} + | H
    |^{2} \mathring{\alpha}' - \varepsilon ( | H |^{2} +2 \omega ) \right]
    \label{Uandneg}\\
    &  & +2 \varepsilon   \omega \left[ - \left( 2 \mathring{\alpha} +
    \frac{1}{n} | H |^{2} -n c- \mathring{\alpha}' | H |^{2} \right) + \frac{|
    H |^{2}}{\omega} \left( \mathring{\alpha} + \frac{1}{n} | H |^{2} +n c
    \right) \right] \nonumber\\
    &  & -2 \varepsilon^{2}   \omega ( | H |^{2} - \omega ) . \nonumber
  \end{eqnarray}
  By Lemma \ref{app} (i) and (iii), the expression in the second square
  bracket of the RHS of (\ref{Uandneg}) equals zero, and the expression in the third square
  bracket of the RHS of (\ref{Uandneg}) is negative. By a direct computation, the
  expression in the last square bracket of the RHS of (\ref{Uandneg}) equals
  \[ \left( \frac{3 ( n-2 ) | H |^{2}}{\sqrt{| H |^{4} +4 ( n-1 ) c  | H
     |^{2}}} -n \right) c, \]
  which is negative. So, we have
  \[ \left( \dt - \Delta \right) U<2U \left[ 2 \mathring{\alpha} + \frac{1}{n}
     | H |^{2} -n c - \mathring{\alpha}' | H |^{2}  +2P_{2} + \varepsilon ( |
     H |^{2} -2 \omega ) \right] +2U^{2} . \]
  By the maximum principle, the assertion follows.
\end{proof}

From the preservation of $\mathring{\alpha} > \varepsilon   \omega$, we have

\begin{corollary}
  There exists a positive constant $\delta$ depending on $\varepsilon$, such
  that $| H |^{2} +n^{2} c> \delta$ holds for all time $t \in [ 0,T )$.
\end{corollary}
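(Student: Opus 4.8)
The plan is to read off the lower bound directly from the preserved pinching inequality together with the elementary estimates on $\mathring{\alpha}$, with no further appeal to the evolution equations. First, by Theorem~\ref{pinch}, the inequality $\normho^{2} < \mathring{\alpha} - \varepsilon\,\omega$ holds on all of $M \times [0,T)$, where $\omega = |H|^{2} + 4(n-1)c$; since $\normho^{2} \ge 0$ this already forces $\mathring{\alpha} > \varepsilon\,\omega$. I would next invoke Lemma~\ref{app}(iii), valid wherever $|H|^{2} + n^{2}c > 0$ (an open condition holding at $t = 0$ and propagated in the proof of Theorem~\ref{pinch}), which gives $0 < \mathring{\alpha} < \dfrac{|H|^{2} + n^{2}c}{n(n-1)}$; in particular $\omega > |H|^{2} + n^{2}c > 0$ as well, because $4(n-1) < n^{2}$ and $c < 0$.

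Chaining the two bounds yields $|H|^{2} + n^{2}c > n(n-1)\,\varepsilon\,\omega$. Writing $x := |H|^{2} + n^{2}c$ and using $\omega = x - (n-2)^{2}c$ (from $4(n-1) - n^{2} = -(n-2)^{2}$), this rearranges to
\[
x\,\bigl(1 - n(n-1)\varepsilon\bigr) > n(n-1)(n-2)^{2}(-c)\,\varepsilon .
\]
Since replacing $\varepsilon$ by a smaller positive number only makes the initial condition $\normho^{2} < \mathring{\alpha} - \varepsilon\,\omega$ easier to arrange, we may assume at the outset that $n(n-1)\varepsilon < 1$; one can in fact check that this is automatic, since $\mathring{\alpha}'(y) < \frac{1}{n(n-1)}$ (Lemma~\ref{app}(iii)) together with $\mathring{\alpha}(y) \to 0$ as $y \to -n^{2}c$ would otherwise make the region defined by $\normho^{2} < \mathring{\alpha} - \varepsilon\,\omega$ empty. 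Then
\[
\delta := \frac{n(n-1)(n-2)^{2}(-c)\,\varepsilon}{1 - n(n-1)\varepsilon} > 0
\]
depends only on $\varepsilon$ (and the fixed numbers $n$ and $c$), and we obtain $|H|^{2} + n^{2}c > \delta$ throughout $[0,T)$.

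I do not expect any real obstacle: the statement is a pointwise algebraic consequence of estimates already established, needing neither the maximum principle nor the parabolic structure. The only point deserving care is the step from $\mathring{\alpha} > \varepsilon\,\omega$ to a uniform positive bound on $|H|^{2} + n^{2}c$ — a priori $\mathring{\alpha}$ itself might be small — but this is handled precisely by Lemma~\ref{app}(iii), which forces $|H|^{2}$ to stay a definite distance above $-n^{2}c$ because $\mathring{\alpha}$ degenerates only in the limit $|H|^{2} \to -n^{2}c$ while $\omega$ stays bounded away from $0$ there. The computation above just makes this quantitative.
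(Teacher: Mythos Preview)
Your argument is correct and is essentially the approach the paper indicates: the paper records the corollary as an immediate consequence of the preserved inequality $\mathring{\alpha} > \varepsilon\,\omega$, and you have simply made that deduction explicit via Lemma~\ref{app}(iii). Your check that $n(n-1)\varepsilon < 1$ is automatic (or may be assumed without loss) is a nice touch and closes the only loose end.
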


For completeness, we present a new proof of the following
proposition {\cite{MR3078951}}, which states that the maximal
existence time is finite.

\begin{proposition} Let $F: M \times [ 0,T ) \rightarrow \mathbb{H}^{n+q} ( c )$ be a
mean
  curvature flow. If the initial value $M_{0}$ is a closed submanifold, then
  the maximal existence time $T$ is finite.
\end{proposition}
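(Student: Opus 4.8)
The plan is to run a standard maximum-principle argument on $|H|^2$ to force blow-up in finite time. First I would use the preserved pinching condition $\normho^2<\mathring\alpha-\varepsilon\omega$ together with Corollary~3.5 to get a lower bound $|H|^2>-n^2c+\delta'$ for some $\delta'>0$ (equivalently $\omega=|H|^2+4(n-1)c$ stays bounded below by a positive constant along the flow, since $n\ge 6$ makes $-n^2c<-4(n-1)c$ false—wait, in fact $n^2>4(n-1)$ for $n\ge 3$, so $-n^2c>-4(n-1)c$ and $\omega>0$ is automatic once $|H|^2+n^2c>\delta$). In particular $|H|^2$ is bounded away from zero on all of $M\times[0,T)$, so $M_t$ is never minimal and the flow is genuinely contracting.

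Next I would look at the evolution equation for $|H|^2$ from Lemma~\ref{evo}(ii): $\dt|H|^2=\Delta|H|^2-2|\nabla H|^2+2R_2+2nc|H|^2$. Dropping the good terms $\Delta|H|^2$ (at a spatial maximum) and $-2|\nabla H|^2\le 0$, and using Lemma~\ref{R12}(ii) to write $R_2=\normho^2|H|^2+\tfrac1n|H|^4-P_2|H|^2\ge \tfrac1n|H|^4$ (since $0\le P_2\le\normho^2$), I get at the point where $|H|^2_{\max}$ is attained
\[
\frac{d}{dt}|H|^2_{\max}\ \ge\ \frac{2}{n}|H|^4_{\max}+2nc|H|^2_{\max}.
\]
Because $|H|^2_{\max}\ge|H|^2_{\min}>-n^2c+\delta$, the quadratic $\tfrac2n y^2+2ncy=\tfrac2n y(y+n^2c)$ is bounded below by $\tfrac{2\delta}{n}|H|^2_{\max}$, hence by $c_1\,|H|^4_{\max}$ for a suitable $c_1>0$ depending on $\delta$ (using again that $|H|^2_{\max}$ is bounded below by a positive constant). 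So $\frac{d}{dt}|H|^2_{\max}\ge c_1|H|^4_{\max}$ in the sense of the upper Dini derivative, and an ODE comparison gives $|H|^2_{\max}(t)\ge \bigl(|H|^2_{\max}(0)^{-1}-c_1 t\bigr)^{-1}$, which blows up at or before time $\bigl(c_1|H|^2_{\max}(0)\bigr)^{-1}<\infty$. Since $M_0$ is closed, short-time existence holds and the solution persists as long as the second fundamental form stays bounded; therefore $T\le\bigl(c_1|H|^2_{\max}(0)\bigr)^{-1}<\infty$.

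The only mildly delicate point is the justification of differentiating $|H|^2_{\max}(t)$: one uses Hamilton's trick (the function $t\mapsto\max_{M}|H|^2(\cdot,t)$ is locally Lipschitz and its derivative, where it exists, is bounded below by the spatial-max of $\dt|H|^2$ evaluated where $\Delta|H|^2\le0$), which is standard for flows on closed manifolds. Everything else is algebraic manipulation using the already-established lower bound on $|H|^2$ and the sign of $c$. I do not anticipate a real obstacle here; the lower bound on $|H|^2$ coming from Corollary~3.5 is exactly what prevents the reaction term $\tfrac2n y(y+n^2c)$ from being non-positive, which is the one place where working in hyperbolic space ($c<0$) rather than $c\ge0$ requires care.
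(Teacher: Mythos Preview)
There is a genuine sign error in your maximum-principle step. At a spatial maximum of $|H|^2$ you have $\Delta|H|^2\le 0$, and the term $-2|\nabla H|^2$ is also $\le 0$; dropping two non-positive contributions from the right-hand side of
\[
\partial_t|H|^2=\Delta|H|^2-2|\nabla H|^2+2R_2+2nc|H|^2
\]
yields an \emph{upper} bound on $\partial_t|H|^2$ at that point, not the lower bound you wrote. In particular, the inequality $\tfrac{d}{dt}|H|^2_{\max}\ge\tfrac{2}{n}|H|^4_{\max}+2nc|H|^2_{\max}$ is unjustified. The first-order condition $\nabla|H|^2=0$ at the maximum only says $\langle\nabla_i H,H\rangle=0$, not $\nabla H=0$ (remember $H$ is a normal vector field, not a scalar), so you cannot kill $-2|\nabla H|^2$ that way either. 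Switching to the spatial minimum repairs the sign of the Laplacian term but still leaves $-2|\nabla H|^2$ uncontrolled. Hence the ODE comparison that is supposed to force blow-up never gets started, and the argument collapses. A secondary issue is that you invoke Corollary~3.5, which already presupposes the pinching hypothesis, whereas the Proposition is stated (and proved) for an arbitrary closed initial submanifold.

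The paper's proof takes a completely different and more robust route: it realizes $\mathbb{H}^{n+q}(c)$ as the spacelike hyperboloid $\langle X,X\rangle=1/c$, $x_0\ge 1/\sqrt{-c}$, in Minkowski space $\mathbb{R}^{1,n+q}$, computes $\Delta X=H-ncX$, and hence rewrites the mean curvature flow as the \emph{linear} extrinsic equation $\partial_t X=\Delta X+ncX$. Applying the scalar maximum principle to the coordinate $x_0$ gives $x_0\le\sup_{t=0}(x_0)\cdot e^{nct}\to 0$ since $c<0$, contradicting $x_0\ge 1/\sqrt{-c}$ if the flow were to exist for all time. No pinching, no lower bound on $|H|$, and no analysis of reaction terms is required.
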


\begin{proof}
  We use the Minkowski model for hyperbolic spaces. Let $\mathbb{R}^{1,m}$ be
  the $m+1$ dimensional Minkowski space. For $X,Y \in \mathbb{R}^{1,m}$, where $X=
  ( x_{0} , \cdots ,x_{m} )$, $Y= ( y_{0} , \cdots ,y_{m} )$, the inner
  product in $\mathbb{R}^{1,m}$ is defined by
  \[ \langle X,Y \rangle =-x_{0} y_{0} +x_{1} y_{1} + \cdots +x_{m} y_{m} . \]
  For $c<0$, we consider the following spacelike hypersurface in
  $\mathbb{R}^{1,m}$
  \[ -x_{0}^{2} +x_{1}^{2} + \cdots +x_{m}^{2} =1/c, \hspace{1em} x_{0}
     \ge 1/ \sqrt{-c} . \]
  It has constant sectional curvature $c$. We identify $\mathbb{H}^{m} ( c )$
  with this hypersurface.

  Let $X:M^{n} \rightarrow \mathbb{H}^{m} ( c ) \subset \mathbb{R}^{1,m}$
  be a submanifold immersed in the hyperbolic space $\mathbb{H}^{m} ( c )$. We denote by $\nabla$ and
  $\bar{\nabla}$ the Levi-Civita connections of $M$ and $\mathbb{H}^{m} ( c )$,
  respectively. Let $u$, $v$ be tangent vector fields over $M$. Since $\langle
  X,X \rangle =1/c$, we have $\langle v X,X \rangle =0$ and $\langle u v X,X
  \rangle =- \langle u X,v X \rangle =- \langle u,v \rangle$. Thus $u v X=
  \bar{\nabla}_{u} v-c \langle u,v \rangle X$. Then we have
  \begin{eqnarray}
    \nabla^{2}_{u,v} X & = & u v X- ( \nabla_{u} v ) X \nonumber\\
    & = & \bar{\nabla}_{u} v-c \langle u,v \rangle X- \nabla_{u} v \\
    & = & h ( u,v ) -c \langle u,v \rangle X. \nonumber
  \end{eqnarray}
  Taking the trace of the both sides of (3.6), we obtain $\Delta X=H-n c X$.

  Let $X:M \times [ 0,T ) \rightarrow \mathbb{H}^{n+q} ( c ) \subset
  \mathbb{R}^{1,n+q}$ be a mean curvature flow. Then the equation of mean
  curvature flow becomes $\dt X= \Delta X+n c X$. Particularly, $\dt x_{0} =
  \Delta x_{0} +n c x_{0}$. By the maximum principle, we have $x_{0} \le
  \sup_{t=0} ( x_{0} ) \cdot \mathe^{n c t}$. Therefore, $T$ is finite.
\end{proof}

Since the maximal existence time is finite, we have $\max_{M_{t}} |
h |^{2} \rightarrow \infty$ as $t \rightarrow T$. This can be shown
by using analogous argument in the proof of the corresponding
theorem in {\cite{MR2739807}}. Once $| h |^{2}$ is uniformly
bounded, then all higher derivatives $| \nabla^{m} h |^{2}$ are
uniformly bounded. Hence the solution $M_{t}$ converge to a limit
$M_{T}$ in $C^{\infty}$--topology as $t \rightarrow T$. Thus, the
flow can be extended over time $T$. This contradicts the maximality
of $T$.

\section{An estimate for traceless second fundamental form}

In this section, we derive an estimate for the traceless second fundamental
form, which shows $\normho$ grows slower than $| H |$ along the mean curvature
flow.

\begin{theorem}
  \label{sa0h2}If $M_{0}$ satisfies $\normho^{2} < \mathring{\alpha} -
  \varepsilon   \omega$ and $| H |^{2} +n^{2} c>0$, then there exist constants
  $0< \sigma <1$ and $C_{0} >0$ depending only on $M_{0}$, such that for all
  $t \in [ 0,T )$ we have
  \[ \normho^{2} \le C_{0}   | H |^{2 ( 1- \sigma )} . \]
\end{theorem}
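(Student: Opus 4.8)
The plan is to run a De Giorgi–Nash–Moser iteration on the scale-invariant quantity $f_\sigma = \normho^2 / \mathring{\alpha}^{1-\sigma}$, where $\sigma \in (0,1)$ is a small parameter to be fixed later. First I would compute the evolution equation $\left(\dt - \Delta\right) f_\sigma$. Using Lemma~\ref{evo}(iii) for $\normho^2$, the evolution equation for $\mathring{\alpha}$ derived just before Theorem~\ref{pinch}, and the chain/quotient rule, the reaction terms will organize (after invoking Theorem~\ref{pinch} to control $\normho^2$ by $\mathring{\alpha}$, Lemma~\ref{R12} for $R_1 - \frac1n R_2$, and the identities and inequalities of Lemma~\ref{app}) into a form
\[
  \left(\dt - \Delta\right) f_\sigma \le -\,\frac{2(1-\sigma)}{\mathring{\alpha}}\,\big\langle \nabla f_\sigma, \nabla \mathring{\alpha}\big\rangle \;-\; (\text{good negative gradient terms})\; +\; C\sigma\, |H|^2 f_\sigma,
\]
where the crucial point is that when $\sigma = 0$ the zeroth-order term vanishes identically (this is exactly where the sharp algebraic identities of Lemma~\ref{app}, together with the preserved pinching $\normho^2 < \mathring\alpha - \varepsilon\omega$, are used — the same cancellations that made the second square bracket in \eqref{Uandneg} vanish), so for small $\sigma$ the bad reaction term is $O(\sigma)|H|^2 f_\sigma$, controllably small. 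I also need the gradient terms on the right to dominate the transport term $\langle \nabla f_\sigma, \nabla\mathring\alpha\rangle/\mathring\alpha$ after integration by parts; Lemma~\ref{dA2}(i) ($|\nabla h|^2 \ge \frac{3}{n+2}|\nabla H|^2$) and Lemma~\ref{app}(iv), (vi) are what make this work, again only because $n \ge 6$.

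Next I would set up the iteration. Fix $p \ge 1$, multiply the evolution inequality by $p f_\sigma^{p-1}$, and integrate over $M_t$ against $\mathe^{-\beta|H|^2}$ or simply $d\mu_t$; since $\frac{d}{dt} d\mu_t = -|H|^2 d\mu_t$ under mean curvature flow, the term $C\sigma|H|^2 \int f_\sigma^p$ can be absorbed using this volume decay provided $C\sigma$ is small relative to $1$ — this is the mechanism, as in Huisken and in Andrews–Baker, by which one proves $\int_{M_t} f_\sigma^p\, d\mu_t$ is non-increasing for every sufficiently large $p$ once $\sigma$ is chosen small enough (depending only on $n$). From the monotonicity at each level $p$, a Stampacchia/Moser iteration over a geometric sequence of exponents $p_k = p_0 \gamma^k$, using the Michael–Simon Sobolev inequality on $M_t$ (whose constant depends only on $n$ and $\sup|H|$, which we do not yet control — so instead one uses the parabolic version, iterating $L^{p}$ bounds through the Sobolev inequality exactly as in \cite{MR2739807,baker2011mean}), yields $\sup_{M_t} f_\sigma \le C_0$ for a constant $C_0$ depending only on the initial data. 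Unwinding, $\normho^2 \le C_0\, \mathring{\alpha}^{1-\sigma} \le C_0'\, |H|^{2(1-\sigma)}$, the last step using $\mathring\alpha(|H|^2) \le \frac{|H|^2 + n^2 c}{n(n-1)} \le \frac{|H|^2}{n(n-1)}$ from Lemma~\ref{app}(iii).

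The main obstacle, I expect, is the very first step: extracting from $\left(\dt - \Delta\right) f_\sigma$ the precise statement that the $\sigma$-independent zeroth-order reaction term vanishes and that the residual is genuinely $O(\sigma)|H|^2 f_\sigma$ with all other terms either negative or absorbable. This requires carefully exploiting the identity in Lemma~\ref{app}(i) (which is the differentiated version of the defining quadratic for $\mathring\alpha$), the identity (ii), and the Cauchy–Schwarz control of $P_2$-type cross terms from Proposition~\ref{lao3}, and it is delicate precisely because our pinching is sharp — there is no room to spare, so the cancellation must be exact rather than merely favorable. Once that algebraic miracle is in place, the rest is a standard (if lengthy) iteration of the Andrews–Baker / Huisken type.
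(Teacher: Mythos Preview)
Your overall strategy --- auxiliary function $f_\sigma=\normho^2/\mathring\alpha^{1-\sigma}$, an evolution inequality with reaction term $O(\sigma)|H|^2 f_\sigma$, $L^p$ bounds, then iteration to $L^\infty$ --- is the right one, and the evolution inequality you sketch is essentially Lemma~\ref{ptf}. But there is a real gap in how you absorb the bad term. You propose to kill $C\sigma\int|H|^2 f_\sigma^p$ using the volume element decay $\tfrac{d}{dt}d\mu_t=-|H|^2 d\mu_t$. After multiplying by $p f_\sigma^{p-1}$, however, the bad term is $p\,C\sigma\int|H|^2 f_\sigma^p$, so volume decay alone forces $\sigma\lesssim 1/p$, not $\sigma$ depending only on $n$. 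This constraint is fatal for the iteration: the Michael--Simon Sobolev inequality on $M_t$ produces an $|H|$-weighted term, and controlling $\int_{A(k)}|H|^n$ amounts to bounding $\int f_{\sigma+r/p}^p$ with $r=n/2$; under $\sigma+r/p\lesssim 1/p$ this requires $r<1$, impossible. Letting $\sigma\to 0$ along the Moser ladder recovers only the preserved pinching $\normho^2<\mathring\alpha$, nothing more. (This is not the mechanism in Huisken or Andrews--Baker either; they face the same obstruction and resolve it as below.)

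The ingredient you are missing is a \emph{second}, static inequality: the Simons-type lower bound (Lemma~\ref{lapa0})
\[
\Delta\normho^2 \;\ge\; 2\big\langle\mathring h,\nabla^2 H\big\rangle+\tfrac{\varepsilon}{2}\,|H|^2\,\normho^2,
\]
equivalently $W\ge\tfrac{\varepsilon}{4}|H|^2\normho^2$. This is where Proposition~\ref{lao3} and the identity Lemma~\ref{app}(ii) are actually used, and in codimension $>1$ the estimate of $W$ through the $P_1,P_2,Q_1,Q_2$ cross terms is the delicate step. Multiplying by $f_\sigma^{p-1}/\mathring\alpha^{1-\sigma}$, integrating, and using Codazzi $\nabla_i\mathring h_{ij}=\tfrac{n-1}{n}\nabla_j H$ to integrate the Hessian term by parts, one obtains
\[
\tfrac{\varepsilon}{2}\int_{M_t}|H|^2 f_\sigma^p\,\mathd\mu_t
\;\le\;\int_{M_t}\Big(\tfrac{3p\,f_\sigma^{p-1}}{\normho}|\nabla f_\sigma||\nabla H|+\tfrac{5f_\sigma^p}{\normho^2}|\nabla H|^2\Big)\mathd\mu_t,
\]
with no $\sigma$ on the right. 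Feeding this back converts the bad reaction term into additional gradient terms with coefficients $O(\sigma p/\varepsilon)$; the resulting quadratic form in $\big(|\nabla f_\sigma|,\tfrac{f_\sigma}{\normho}|\nabla H|\big)$ is nonpositive once $\sigma\le\varepsilon^2/\sqrt{p}$. That looser constraint is exactly what allows Corollary~\ref{pnorm2} (the $|H|$-weighted $L^p$ bound) and the De~Giorgi iteration to close.
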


To prove Theorem 4.1, we need to study the auxiliary function:
\[ f_{\sigma} = \frac{\normho^{2}}{\mathring{\alpha}^{1- \sigma}} ,
   \hspace{1em} 0< \sigma <1. \]

First, we derive the evolution equation of $f_{\sigma}$.

\begin{lemma}
  \label{ptf} Along the mean curvature flow, we have
  \[ \dt f_{\sigma} \le \Delta f_{\sigma} + \frac{2}{\normho} | \nabla
     f_{\sigma} | | \nabla H | -12  \varepsilon \frac{f_{\sigma}}{\normho^{2}}
     | \nabla H |^{2} -4 c f_{\sigma} + \sigma | H |^{2} f_{\sigma} . \]
\end{lemma}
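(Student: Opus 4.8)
The plan is to compute $\partial_t f_\sigma$ directly from the quotient rule, using the evolution equations for $\normho^2$ (Lemma \ref{evo}(iii)) and for $\mathring\alpha$ (equation (3.3)), and then to reorganize the resulting terms so that the bad ones cancel against the good gradient terms coming out of the Laplacian of a quotient. Writing $f_\sigma = \normho^2 \cdot \mathring\alpha^{\sigma-1}$, I would first record the identity for $\Delta f_\sigma$: since for a quotient $u/v^{1-\sigma}$ one has
\[
\Delta\!\left(\frac{u}{v^{1-\sigma}}\right)
= \frac{\Delta u}{v^{1-\sigma}}
- (1-\sigma)\frac{u\,\Delta v}{v^{2-\sigma}}
- 2(1-\sigma)\frac{\langle\nabla u,\nabla v\rangle}{v^{2-\sigma}}
+ (1-\sigma)(2-\sigma)\frac{u\,|\nabla v|^2}{v^{3-\sigma}},
\]
with $u=\normho^2$, $v=\mathring\alpha$. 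Then $\partial_t f_\sigma - \Delta f_\sigma$ becomes $\mathring\alpha^{\sigma-1}$ times $(\partial_t-\Delta)\normho^2$, plus $(1-\sigma)\normho^2\mathring\alpha^{\sigma-2}$ times $(\partial_t-\Delta)\mathring\alpha$, minus the two ``cross'' and ``Hessian of $v$'' correction terms above. The reaction parts: from $(\partial_t-\Delta)\normho^2$ we get $-2|\nabla h|^2+\frac2n|\nabla H|^2 + 2R_1-\frac2n R_2 - 2nc\normho^2$, and from $(\partial_t-\Delta)\mathring\alpha$ we get $2\mathring\alpha'(-|\nabla H|^2+R_2+nc|H|^2)-\mathring\alpha''|\nabla|H|^2|^2$.

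Next I would handle the curvature (non-gradient) reaction terms. Using Lemma \ref{R12} to bound $R_1-\frac1n R_2$ and $R_2$, together with the pinching inequality $\normho^2<\mathring\alpha-\varepsilon\omega$ from Theorem \ref{pinch} (so that $\normho^2$ can be replaced, up to a strictly negative slack, by $\mathring\alpha$), and the algebraic identities Lemma \ref{app}(i),(ii), I expect the whole curvature block to collapse to exactly $(-4c+\sigma|H|^2)f_\sigma$ plus a nonpositive remainder. Concretely, the key cancellation is that $\mathring\alpha\cdot(\mathring\alpha+\frac1n|H|^2-nc) - \mathring\alpha'|H|^2\cdot(\mathring\alpha+\frac1n|H|^2+nc)=0$ by Lemma \ref{app}(i), which is the same identity already exploited in the proof of Theorem \ref{pinch}; the surviving $P_2$-terms are controlled using $2\mathring\alpha-\frac1n|H|^2+\mathring\alpha'|H|^2<0$ (Lemma \ref{app}(iii), since $\mathring\alpha<\frac{|H|^2+n^2c}{n(n-1)}\le\frac{|H|^2}{n(n-1)}$ plus $\mathring\alpha'|H|^2<\mathring\alpha'\cdot$ handled via (iii)), so they drop out with a favorable sign. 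The $-2nc\normho^2$ term and the $2\mathring\alpha'nc|H|^2$ term are what produce the $-4cf_\sigma$; the slack $\varepsilon\omega$ in the pinching, combined with the genuine gap, is what generates the coefficient $-12\varepsilon$ in front of the gradient term in the statement — i.e. the negative-definite curvature remainder must be traded against $\frac{f_\sigma}{\normho^2}|\nabla H|^2$, which is where keeping track of $\varepsilon$ carefully matters.

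For the gradient terms, I would combine $-2|\nabla h|^2 \le -\frac{6}{n+2}|\nabla H|^2$ (Lemma \ref{dA2}(i)) with $\frac2n|\nabla H|^2$, the $-2\mathring\alpha'|\nabla H|^2$ contribution, the $-\mathring\alpha''|\nabla|H|^2|^2 \ge -4\mathring\alpha''|H|^2|\nabla H|^2$ contribution (Lemma \ref{dA2}(ii)), and the positive Hessian-of-$v$ term $(1-\sigma)(2-\sigma)\mathring\alpha^{\sigma-3}\normho^2|\nabla\mathring\alpha|^2$ (which is $\le$ const$\cdot\frac{f_\sigma}{\mathring\alpha^2}(\mathring\alpha')^2|\nabla|H|^2|^2$); using Lemma \ref{app}(vi) (precisely the inequality $2y\mathring\alpha''+\mathring\alpha'+\frac1n<\frac{3}{n+2}$ proved above) one gets $-\frac6{n+2}|\nabla H|^2+\frac2n|\nabla H|^2-2\mathring\alpha'|\nabla H|^2-4\mathring\alpha''|H|^2|\nabla H|^2 \le 0$, leaving room to absorb the positive quotient terms and still retain $-12\varepsilon\frac{f_\sigma}{\normho^2}|\nabla H|^2$. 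The two ``cross'' terms — namely $-2(1-\sigma)\mathring\alpha^{\sigma-2}\langle\nabla\normho^2,\nabla\mathring\alpha\rangle$ and the cross-term hidden inside replacing $\normho^2$ by $\mathring\alpha-$slack — I would not try to sign, but rather rewrite them in terms of $\nabla f_\sigma$: since $\nabla f_\sigma = \mathring\alpha^{\sigma-1}\nabla\normho^2 + (\sigma-1)\normho^2\mathring\alpha^{\sigma-2}\nabla\mathring\alpha$, these cross-terms become (a constant times) $\langle\nabla f_\sigma, \nabla\mathring\alpha/\mathring\alpha\rangle$-type expressions, bounded by $\frac{2}{\normho}|\nabla f_\sigma||\nabla H|$ after using $|\nabla\mathring\alpha| = \mathring\alpha'|\nabla|H|^2| \le 2\mathring\alpha'|H||\nabla H|$ and Lemma \ref{app}(iv) ($2\sqrt{y}\mathring\alpha'<\sqrt{\mathring\alpha}$) to convert the $\mathring\alpha'|H|$ factor into something comparable to $\sqrt{\mathring\alpha}\sim\normho$ on the pinched region. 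This is the step I expect to be the main obstacle: making the transition from the naive quotient-rule expression to the clean form $\frac{2}{\normho}|\nabla f_\sigma||\nabla H|$ requires choosing $\sigma$ small enough (depending on $\varepsilon$ and $n$) that all the $O(\sigma)$ gradient errors and the $(1-\sigma)(2-\sigma)$ Hessian term stay within the budget left over after Lemma \ref{app}(vi), so the bookkeeping of constants — not any single inequality — is the delicate part, and I would organize it by first proving everything with $\sigma=0$ (where the gradient block is exactly nonpositive) and then perturbing.
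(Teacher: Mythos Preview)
Your overall strategy is the paper's, but three details are off and the first two are not cosmetic.

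\textbf{Sign of the $\mathring{\alpha}$-contribution.} You write that $\partial_t f_\sigma - \Delta f_\sigma$ contains ``\emph{plus} $(1-\sigma)\normho^2\mathring{\alpha}^{\sigma-2}$ times $(\partial_t-\Delta)\mathring{\alpha}$''. Since $f_\sigma=\normho^2\mathring{\alpha}^{\sigma-1}$, the coefficient is $(\sigma-1)$, i.e.\ \emph{minus} $(1-\sigma)$. With the correct sign the gradient contribution from $\mathring{\alpha}$ is $+(1-\sigma)\tfrac{f_\sigma}{\mathring{\alpha}}\bigl(2\mathring{\alpha}'|\nabla H|^2+\mathring{\alpha}''|\nabla|H|^2|^2\bigr)$, so the $\mathring{\alpha}'$, $\mathring{\alpha}''$ terms enter with the \emph{positive} sign, and your displayed inequality $-\tfrac{6}{n+2}+\tfrac{2}{n}-2\mathring{\alpha}'-4|H|^2\mathring{\alpha}''\le 0$ is the wrong inequality (it is trivially true and does not use Lemma~\ref{app}(vi)). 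The same sign error would destroy the cancellation via Lemma~\ref{app}(i) in the reaction block.

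\textbf{Where the $-12\varepsilon$ lives.} The coefficient $-12\varepsilon$ does \emph{not} come from the reaction block; it comes from the gradient block. After fixing the sign, the gradient block is
\[
\frac{2f_\sigma}{\normho^2}\Bigl(\frac{|\nabla H|^2}{n}-|\nabla h|^2\Bigr)
+\frac{f_\sigma}{\mathring{\alpha}}\bigl(2\mathring{\alpha}'+4|H|^2\mathring{\alpha}''\bigr)|\nabla H|^2.
\]
Now split $\tfrac{1}{\normho^2}=\tfrac{1}{\mathring{\alpha}}+\tfrac{\mathring{\alpha}-\normho^2}{\mathring{\alpha}\normho^2}$ in the first term. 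The $\tfrac{1}{\mathring{\alpha}}$ part combines with the second term and is $\le 0$ by Lemma~\ref{app}(vi); the $\tfrac{\mathring{\alpha}-\normho^2}{\mathring{\alpha}\normho^2}$ part, using the pinching slack $\mathring{\alpha}-\normho^2>\varepsilon\omega$ together with $\omega/\mathring{\alpha}>n(n-1)$ (Lemma~\ref{app}(iii)), yields $-\tfrac{4(n-1)^2}{n+2}\tfrac{\varepsilon}{\normho^2}|\nabla H|^2\le -\tfrac{12\varepsilon}{\normho^2}|\nabla H|^2$ for $n\ge 6$. The reaction block, by contrast, is handled exactly as in Theorem~\ref{pinch} and yields $-4cf_\sigma+\sigma|H|^2 f_\sigma$ with no $\varepsilon$.

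\textbf{No need for $\sigma$ small.} Your worry about $O(\sigma)$ errors and the $(1-\sigma)(2-\sigma)$ Hessian term is misplaced. If you carry out the rewriting of the cross term in terms of $\nabla f_\sigma$ that you describe, the residual $+2(1-\sigma)^2\tfrac{f_\sigma|\nabla\mathring{\alpha}|^2}{\mathring{\alpha}^2}$ combines with the $-(1-\sigma)(2-\sigma)$ Hessian term to leave exactly $-\sigma(1-\sigma)\tfrac{f_\sigma|\nabla\mathring{\alpha}|^2}{\mathring{\alpha}^2}\le 0$; equivalently, write the Laplacian from the start as
\[
\Delta f_\sigma
= f_\sigma\Bigl(\frac{\Delta\normho^2}{\normho^2}-(1-\sigma)\frac{\Delta\mathring{\alpha}}{\mathring{\alpha}}\Bigr)
-2(1-\sigma)\frac{\langle\nabla f_\sigma,\nabla\mathring{\alpha}\rangle}{\mathring{\alpha}}
+\sigma(1-\sigma)f_\sigma\frac{|\nabla\mathring{\alpha}|^2}{\mathring{\alpha}^2},
\]
so the only cross term is already $\langle\nabla f_\sigma,\nabla\mathring{\alpha}\rangle$ and is bounded by $\tfrac{2}{\normho}|\nabla f_\sigma||\nabla H|$ via Lemma~\ref{app}(iv). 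The inequality then holds for every $\sigma\in(0,1)$, not just small $\sigma$.
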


\begin{proof}
  By a direct computation, we have
  \begin{equation} \dt f_{\sigma} =f_{\sigma} \left( \frac{\dt \normho^{2}}{\normho^{2}} - (
     1- \sigma ) \frac{\dt \mathring{\alpha}}{\mathring{\alpha}} \right) . \label{partialtf} \end{equation}
  The gradient of $f_{\sigma}$ can be written as
  \[ \nabla f_{\sigma} =f_{\sigma} \left( \frac{\nabla
     \normho^{2}}{\normho^{2}} - ( 1- \sigma ) \frac{\nabla
     \mathring{\alpha}}{\mathring{\alpha}} \right). \]
  The Laplacian of $f_{\sigma}$ is given by
  \begin{equation}
    \Delta f_{\sigma} =f_{\sigma} \left( \frac{\Delta
    \normho^{2}}{\normho^{2}} - ( 1- \sigma ) \frac{\Delta
    \mathring{\alpha}}{\mathring{\alpha}} \right) -2 ( 1- \sigma )
    \frac{\left\langle \nabla f_{\sigma} , \nabla \mathring{\alpha}
    \right\rangle}{\mathring{\alpha}} + \sigma ( 1- \sigma ) f_{\sigma}
    \frac{\left| \nabla \mathring{\alpha} \right|^{2}}{\left|
    \mathring{\alpha} \right|^{2}} . \label{lapf}
  \end{equation}
  By (\ref{partialtf}), (\ref{lapf}) and the evolution equations, we get
  \begin{eqnarray}
    \left( \dt - \Delta \right) f_{\sigma} & = & 2 ( 1- \sigma )
    \frac{\left\langle \nabla f_{\sigma} , \nabla \mathring{\alpha}
    \right\rangle}{\mathring{\alpha}} - \sigma ( 1- \sigma ) f_{\sigma}
    \frac{\left| \nabla \mathring{\alpha} \right|^{2}}{\left|
    \mathring{\alpha} \right|^{2}} \nonumber\\
    &  & + \frac{2 f_{\sigma}}{\normho^{2}} \left( \frac{| \nabla H |^{2}}{n}
    - | \nabla h |^{2} \right) + ( 1- \sigma )
    \frac{f_{\sigma}}{\mathring{\alpha}} \left( 2 \mathring{\alpha}' | \nabla
    H |^{2} + \mathring{\alpha}''   | \nabla | H |^{2} |^{2} \right)
    \nonumber\\
    &  & +2 f_{\sigma} \left[ \frac{1}{\normho^{2}} \left( R_{1} -
    \frac{1}{n}  R_{2} \right) -n c- ( 1- \sigma )
    \frac{\mathring{\alpha}'}{\mathring{\alpha}} (  R_{2} +n c | H |^{2} )
    \right] \nonumber\\
    & \le & \frac{2}{\mathring{\alpha}} | \nabla f_{\sigma} |   \left|
    \nabla \mathring{\alpha} \right|  \label{dtf}\\
    &  & +f_{\sigma} \left[   \frac{2}{\normho^{2}} \left( \frac{| \nabla H
    |^{2}}{n} - | \nabla h |^{2} \right) + \frac{2
    \mathring{\alpha}'}{\mathring{\alpha}} | \nabla H |^{2} +
    \frac{\mathring{\alpha}''}{\mathring{\alpha}} | \nabla | H |^{2} |^{2}
    \right] \nonumber\\
    &  & +2 f_{\sigma} \left[ \frac{1}{\normho^{2}} \left( R_{1} -
    \frac{1}{n}  R_{2} \right) -n c- ( 1- \sigma )
    \frac{\mathring{\alpha}'}{\mathring{\alpha}} (  R_{2} +n c | H |^{2} )
    \right] . \nonumber
  \end{eqnarray}
By Lemma \ref{dA2} (ii) and Lemma \ref{app} (iv), we have
  \begin{equation}
    \frac{1}{\mathring{\alpha}}   \left| \nabla \mathring{\alpha} \right| =
    \frac{\mathring{\alpha}'}{\mathring{\alpha}} | \nabla | H |^{2} |
    \le \frac{1}{\normho} | \nabla H | . \label{aopaodfsdh2}
  \end{equation}
  Now we estimate the expression in the first square bracket of the right hand
  side of (\ref{dtf}). From Lemma \ref{dA2} and Lemma \ref{app}, we have
  \begin{eqnarray*}
    &  & \frac{2}{\normho^{2}} \left( \frac{| \nabla H |^{2}}{n} - | \nabla h
    |^{2} \right) + \frac{2  \mathring{\alpha}'}{\mathring{\alpha}} | \nabla H
    |^{2} + \frac{\mathring{\alpha}''}{\mathring{\alpha}} | \nabla | H |^{2}
    |^{2}\\
    & \le & \left( \frac{4 ( 1-n )}{n ( n+2 )}   \frac{1}{\normho^{2}}
    + \frac{2  \mathring{\alpha}'}{\mathring{\alpha}} +4 | H |^{2}
    \frac{\mathring{\alpha}''}{\mathring{\alpha}} \right) | \nabla H |^{2}\\
    & = & \left[ \frac{4 ( 1-n )}{n ( n+2 )}   \left(
    \frac{1}{\mathring{\alpha}} +  \frac{\mathring{\alpha} -
    \normho^{2}}{\mathring{\alpha} \normho^{2}} \right) + \frac{2
    \mathring{\alpha}'}{\mathring{\alpha}} +4 | H |^{2}
    \frac{\mathring{\alpha}''}{\mathring{\alpha}} \right] | \nabla H |^{2}\\
    & \le & \left[ - \frac{4 ( n-1 )}{n ( n+2 )}   \frac{\varepsilon
    \omega}{\mathring{\alpha} \normho^{2}} + \frac{1}{\mathring{\alpha}}
    \left( \frac{4 ( 1-n )}{n ( n+2 )} +2  \mathring{\alpha}' +4 | H |^{2}
    \mathring{\alpha}'' \right) \right] | \nabla H |^{2}\\
    & \le & - \frac{4 ( n-1 )}{n ( n+2 )}
    \frac{\omega}{\mathring{\alpha}}   \frac{\varepsilon}{\normho^{2}} |
    \nabla H |^{2}\\
    & \le & - \frac{12  \varepsilon}{\normho^{2}} | \nabla H |^{2} .
  \end{eqnarray*}

  Next we estimate the expression in the second square bracket of the right
  hand side of (\ref{dtf}). By Lemma \ref{R12}, we have
  \begin{eqnarray}
    &  & \frac{1}{\normho^{2}} \left( R_{1} - \frac{1}{n}  R_{2} \right) -n
    c- ( 1- \sigma ) \frac{\mathring{\alpha}'}{\mathring{\alpha}} (  R_{2} +n
    c | H |^{2} ) \nonumber\\
    & \le & | h |^{2} -n c- ( 1- \sigma )
    \frac{\mathring{\alpha}'}{\mathring{\alpha}} | H |^{2} (   | h |^{2} +n c
    ) \nonumber\\
    &  & +P_{2} \left[ \frac{1}{\normho^{2}} \left( 2 \normho^{2} -
    \frac{1}{n} | H |^{2} \right) + ( 1- \sigma )
    \frac{\mathring{\alpha}'}{\mathring{\alpha}} | H |^{2} \right]
    \label{sH22c}\\
    & \le & \sigma ( | h |^{2} +n c ) + ( 1- \sigma ) ( | h |^{2} +n c
    ) \left( 1- \frac{  \mathring{\alpha}'}{\mathring{\alpha}} | H |^{2}
    \right) -2n c \nonumber\\
    &  & +P_{2} \left[ 2+ \left( - \frac{1}{n} + \mathring{\alpha}' \right)
    \frac{| H |^{2}}{\mathring{\alpha}} \right] . \nonumber
  \end{eqnarray}
  From Lemma \ref{app} (iii), we get $2+ \left( - \frac{1}{n} +
  \mathring{\alpha}' \right) \frac{| H |^{2}}{\mathring{\alpha}} <0$ and $1-
  \frac{  \mathring{\alpha}'}{\mathring{\alpha}} | H |^{2} <0$. Then we have
  \begin{eqnarray*}
    ( | h |^{2} +n c ) \left( 1- \frac{
    \mathring{\alpha}'}{\mathring{\alpha}} | H |^{2} \right) & \le &
    \left( \frac{1}{n} | H |^{2} +n c \right) \left( 1- \frac{
    \mathring{\alpha}'}{\mathring{\alpha}} | H |^{2} \right)\\
    & = & \left( \frac{( n-2 ) | H |^{2}}{\sqrt{| H |^{4} +4 ( n-1 ) c  | H
    |^{2}}} +n \right) c\\
    & < & ( 2n-2 ) c.
  \end{eqnarray*}
  Therefore, the right hand side of (\ref{sH22c}) is less than
  \begin{eqnarray*}
    \sigma ( | h |^{2} +n c ) + ( 1- \sigma ) ( 2n-2 ) c-2n c & = & \sigma ( |
    h |^{2} + ( 2-n ) c ) -2 c\\
    & < & \sigma \left( \mathring{\alpha} + \frac{1}{n} | H |^{2} + ( 2-n ) c
    \right) -2 c\\
    & < & \frac{\sigma}{2} | H |^{2} -2c.
  \end{eqnarray*}
This proves Lemma 4.2.
\end{proof}

To estimate the term $\sigma | H |^{2}
f_{\sigma}$ in Lemma \ref{dtf}, we need the following.

\begin{lemma}
  \label{lapa0}If a submanifold in $\mathbb{H}^{n+q} ( c )$ satisfies
  $\normho^{2} < \mathring{\alpha} - \varepsilon   \omega$ and $| H |^{2}
  +n^{2} c>0$, then we have
  \[ \Delta \normho^{2} \ge 2 \left\langle \mathring{h} , \nabla^{2} H
     \right\rangle +  \frac{\varepsilon}{2}   | H |^{2}   \normho^{2} . \]
\end{lemma}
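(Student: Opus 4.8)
The plan is to reduce the stated inequality to a pointwise algebraic lower bound for $W$ and then to establish that bound by the frame computation of Section~2 together with the curvature pinching. Starting from the Simons-type identity \eqref{lapho2},
\[ \tfrac{1}{2}\Delta\normho^{2} = \left\langle \mathring{h} , \nabla^{2} H \right\rangle + |\nabla h|^{2} - \tfrac{1}{n}|\nabla H|^{2} + W , \]
I would discard the gradient terms using Lemma~\ref{dA2}(i): since $|\nabla h|^{2} \ge \tfrac{3}{n+2}|\nabla H|^{2} \ge \tfrac{1}{n}|\nabla H|^{2}$, one has $\Delta\normho^{2} \ge 2\left\langle \mathring{h},\nabla^{2}H\right\rangle + 2W$. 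Thus the lemma reduces to the purely pointwise inequality $W \ge \tfrac{\varepsilon}{4}|H|^{2}\normho^{2}$, to be proved at every point where $\normho^{2} < \mathring{\alpha} - \varepsilon\omega$ and $|H|^{2}+n^{2}c>0$.

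To bound $W = nc\normho^{2} - R_{1} + \sum_{i,j,k,\alpha,\beta} H^{\alpha}h^{\alpha}_{ik}h^{\beta}_{ij}h^{\beta}_{jk}$ at a fixed point, I would pass to the adapted frame of Section~2, in which $H=|H|\nu_{1}$ and $(h^{1}_{ij})$ is diagonal with entries $\mathring{\lambda}_{i}+\tfrac{1}{n}|H|$. Expanding the cubic term and inserting the estimate \eqref{R1nogreater} for $R_{1}$ and the identity \eqref{R2eq} for $R_{2}$, the terms depending only on $|H|$ and $P_{1}=\sum_{i}\mathring{\lambda}_{i}^{2}$ cancel in part, and the remaining cubic pieces are $\sum_{i}\mathring{\lambda}_{i}\sum_{\alpha}(\mathring{h}^{\alpha}_{ii})^{2}$, for which $\sum_{i}\mathring{h}^{\alpha}_{ii}=0$ for every $\alpha$, and $\sum_{\alpha>1}\sum_{i\ne j}\mathring{\lambda}_{i}(\mathring{h}^{\alpha}_{ij})^{2}$. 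Estimating these by Proposition~\ref{lao3} with $a_{i}=\mathring{\lambda}_{i}$, and using $P_{1}\le\normho^{2}$, one reaches a lower bound of the form
\[ W \ge \normho^{2}\left( nc + \tfrac{1}{n}|H|^{2} - \normho^{2} - \tfrac{n-2}{\sqrt{n(n-1)}}\,|H|\,\normho \right) + (\text{a term carried by } P_{2}), \]
in which $\tfrac{n-2}{\sqrt{n(n-1)}}$ is exactly the constant of Proposition~\ref{lao3}; the $P_{2}$-term is handled using $2\normho^{2}-\tfrac{1}{n}|H|^{2}<0$ and $|H|^{2}>n(n-1)\normho^{2}$, both of which follow from $\normho^{2}<\mathring{\alpha}$ and Lemma~\ref{app}(iii).

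The pinching enters through Lemma~\ref{app}. Writing $\normho^{2}=\mathring{\alpha}-s$ with $s=\mathring{\alpha}-\normho^{2}>\varepsilon\omega$, the identity Lemma~\ref{app}(ii), namely $\tfrac{n-2}{\sqrt{n(n-1)}}|H|\sqrt{\mathring{\alpha}}=\tfrac{1}{n}|H|^{2}-\mathring{\alpha}+nc$, shows that the bracket above equals $s + \tfrac{n-2}{\sqrt{n(n-1)}}|H|\bigl(\sqrt{\mathring{\alpha}}-\sqrt{\mathring{\alpha}-s}\bigr) \ge s > \varepsilon\omega$. Since $|H|^{2}+n^{2}c>0$ gives $\omega=|H|^{2}+4(n-1)c>\tfrac{(n-2)^{2}}{n^{2}}|H|^{2}$, which for $n\ge6$ exceeds $\tfrac14|H|^{2}$, one obtains $\normho^{2}\cdot(\text{bracket})>\tfrac{\varepsilon}{4}|H|^{2}\normho^{2}$, hence $W\ge\tfrac{\varepsilon}{4}|H|^{2}\normho^{2}$.

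The step I expect to be the main obstacle is the sharp lower bound for $W$. Since $\mathring{\alpha}$ is the \emph{optimal} pinching constant, no slack is available: the constant $\tfrac{n-2}{\sqrt{n(n-1)}}$ from Proposition~\ref{lao3} must cancel exactly against $\mathring{\alpha}$ through Lemma~\ref{app}(ii) — the same mechanism that drives the preservation argument of Theorem~\ref{pinch} — and the normal-direction terms making up $P_{2}=Q_{1}+Q_{2}$ must be controlled via the single constraint $P_{1}+P_{2}=\normho^{2}<\mathring{\alpha}-\varepsilon\omega$ together with the sign of $2\normho^{2}-\tfrac{1}{n}|H|^{2}$, rather than by bounding $P_{1}$ and $P_{2}$ separately.
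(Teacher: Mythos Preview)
Your reduction to the pointwise bound $W\ge\tfrac{\varepsilon}{4}|H|^{2}\normho^{2}$, the use of the special frame, the application of Proposition~\ref{lao3} to the diagonal cubic $\sum_{\alpha,i}\mathring{\lambda}_{i}(\mathring{h}^{\alpha}_{ii})^{2}$, and the final identification of the bracket with $\varepsilon\omega$ via Lemma~\ref{app}(ii) are all exactly as in the paper. The outline is right.

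The gap is in the off-diagonal cubic $|H|\sum_{\alpha>1,\,i\neq j}\mathring{\lambda}_{i}(\mathring{h}^{\alpha}_{ij})^{2}$. Proposition~\ref{lao3} does not apply here: for fixed $\alpha>1$ there is no choice of $b_{i}$ with $\sum_{i}b_{i}=0$ whose squares reproduce $\sum_{j\neq i}(\mathring{h}^{\alpha}_{ij})^{2}$. Likewise, the coefficient $2\normho^{2}-\tfrac{1}{n}|H|^{2}$ you invoke is the $P_{2}$-coefficient in Lemma~\ref{R12}(i) for $R_{1}-\tfrac{1}{n}R_{2}$; it is not the combination that actually multiplies $P_{2}$ (or $Q_{2}$) in the expansion of $W$, so that sign argument does not control the leftover normal terms in $W$.

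The paper's device is to keep the exact quartic $-2\sum_{\alpha>1,\,i\neq j}((\mathring{\lambda}_{i}-\mathring{\lambda}_{j})\mathring{h}^{\alpha}_{ij})^{2}$ in the expansion of $W$ (rather than replacing it at once by $-4P_{1}Q_{2}$ via \eqref{R1nogreater}) and to \emph{combine} it with the off-diagonal cubic. Writing $y=\mathring{\lambda}_{i}+\mathring{\lambda}_{j}$ and minimizing the quadratic $\tfrac{3}{4}y^{2}+\tfrac{1}{2}|H|y$ over $y\ge -\sqrt{2P_{1}}$ gives
\[
|H|\sum_{\alpha>1,\,i\neq j}\mathring{\lambda}_{i}(\mathring{h}^{\alpha}_{ij})^{2}
-2\sum_{\alpha>1,\,i\neq j}((\mathring{\lambda}_{i}-\mathring{\lambda}_{j})\mathring{h}^{\alpha}_{ij})^{2}
\;\ge\; -\Bigl(\tfrac{\sqrt{2}}{2}|H|\sqrt{P_{1}}+\tfrac{5}{2}P_{1}\Bigr)Q_{2}.
\]
Only after this does one see the residual $Q_{2}$-term $\bigl(\tfrac{n-2}{\sqrt{n(n-1)}}-\tfrac{\sqrt{2}}{2}\bigr)|H|\sqrt{P_{1}}\,Q_{2}$, whose nonnegativity is precisely the inequality $\tfrac{n-2}{\sqrt{n(n-1)}}\ge\tfrac{\sqrt{2}}{2}$, i.e.\ $n\ge6$. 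That is where the dimensional restriction genuinely enters --- not in the estimate $\omega>\tfrac{1}{4}|H|^{2}$, which already holds for $n\ge4$.
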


\begin{proof}
  From (\ref{lapho2}) and Lemma \ref{dA2} (i), we only need to prove $W\ge
  \frac{\varepsilon}{4}  H^{2}   \normho^{2}$.

  We work with the special local orthonormal frame, such that $\nu_{1} =H/ | H
  |$ and $\mathring{h}^{1} = \tmop{diag} \left( \mathring{\lambda}_{1} ,
  \cdots , \mathring{\lambda}_{n} \right)$. Then we expand $W$ to get
  \begin{eqnarray*}
    W & = & n c \normho^{2} -P_{1}^{2} + \frac{1}{n} \normho^{2} | H |^{2}\\
    &  & -2 \sum_{\alpha >1} \left( \sum_{i} \mathring{\lambda}_{i}
    \mathring{h}^{\alpha}_{i i} \right)^{2} -2
    \sum_{\tmscript{\begin{array}{c}
      \alpha >1\\
      i \neq j
    \end{array}}} \left( \left( \mathring{\lambda}_{i} -
    \mathring{\lambda}_{j} \right) \mathring{h}^{\alpha}_{i j} \right)^{2}\\
    &  & - \sum_{\alpha , \beta >1} \left( \sum_{i,j}
    \mathring{h}^{\alpha}_{i j}   \mathring{h}^{\beta}_{i j} \right)^{2} -
    \sum_{\tmscript{\begin{array}{c}
      \alpha , \beta >1\\
      i,j
    \end{array}}} \left( \sum_{k} \left( \mathring{h}^{\alpha}_{i k}
    \mathring{h}^{\beta}_{j k} - \mathring{h}^{\alpha}_{j k}
    \mathring{h}^{\beta}_{i k} \right) \right)^{2}\\
    &  & + | H | \sum_{\alpha ,i} \mathring{\lambda}_{i} \left(
    \mathring{h}^{\alpha}_{i i} \right)^{2} + | H |
    \sum_{\tmscript{\begin{array}{c}
      \alpha >1\\
      i \neq j
    \end{array}}} \mathring{\lambda}_{i} \left( \mathring{h}^{\alpha}_{i j}
    \right)^{2} .
  \end{eqnarray*}

  Using (\ref{P1Q1}) and (\ref{sP232}) again, we have
  \[ \sum_{\alpha >1} \left( \sum_{i} \mathring{\lambda}_{i}
     \mathring{h}^{\alpha}_{i i} \right)^{2} \leqslant P_{1} Q_{1} , \]
  \[ \sum_{\alpha , \beta >1} \left( \sum_{i,j} \mathring{h}^{\alpha}_{i j}
     \mathring{h}^{\beta}_{i j} \right)^{2} + \sum_{\tmscript{\begin{array}{c}
       \alpha , \beta >1\\
       i,j
     \end{array}}} \left( \sum_{k} \left( \mathring{h}^{\alpha}_{i k}
     \mathring{h}^{\beta}_{j k} - \mathring{h}^{\alpha}_{j k}
     \mathring{h}^{\beta}_{i k} \right) \right)^{2} \leqslant \frac{3}{2}
     P_{2}^{2} . \]

  By Proposition \ref{lao3}, we have
  \begin{eqnarray*}
    | H | \sum_{\alpha ,i} \mathring{\lambda}_{i} \left(
    \mathring{h}^{\alpha}_{i i} \right)^{2} & \geqslant & -
    \tfrac{n-2}{\sqrt{n ( n-1 )}} | H | \sqrt{P_{1}} \left( \normho^{2} -Q_{2}
    \right)\\
    & \geqslant & -  \tfrac{n-2}{\sqrt{n ( n-1 )}} | H | \left( \frac{1}{2}
    \left( P_{1} + \normho^{2} \right) \normho - \sqrt{P_{1}} Q_{2} \right)\\
    & = & -  \tfrac{n-2}{\sqrt{n ( n-1 )}} | H | \left( \normho^{3} -
    \frac{1}{2} \normho P_{2} - \sqrt{P_{1}} Q_{2} \right) .
  \end{eqnarray*}

  We estimate the rest terms as following.
  \begin{eqnarray*}
    &  & | H | \sum_{\tmscript{\begin{array}{c}
      \alpha >1\\
      i \neq j
    \end{array}}} \mathring{\lambda}_{i} \left( \mathring{h}^{\alpha}_{i j}
    \right)^{2} -2 \sum_{\tmscript{\begin{array}{c}
      \alpha >1\\
      i \neq j
    \end{array}}} \left( \left( \mathring{\lambda}_{i} -
    \mathring{\lambda}_{j} \right) \mathring{h}^{\alpha}_{i j} \right)^{2}\\
    & = & \sum_{\tmscript{\begin{array}{c}
      \alpha >1\\
      i \neq j
    \end{array}}} \left[ \frac{1}{2} | H | \left( \mathring{\lambda}_{i} +
    \mathring{\lambda}_{j} \right) +2 \left( \mathring{\lambda}_{i} +
    \mathring{\lambda}_{j} \right)^{2} -4 \left( \mathring{\lambda}_{i}^{2} +
    \mathring{\lambda}_{j}^{2} \right) \right] \left( \mathring{h}^{\alpha}_{i
    j} \right)^{2}\\
    & \geqslant & \sum_{\tmscript{\begin{array}{c}
      \alpha >1\\
      i \neq j
    \end{array}}} \left[ \frac{1}{2} | H | \left( \mathring{\lambda}_{i} +
    \mathring{\lambda}_{j} \right) + \frac{3}{4} \left( \mathring{\lambda}_{i}
    + \mathring{\lambda}_{j} \right)^{2} -4P_{1} \right] \left(
    \mathring{h}^{\alpha}_{i j} \right)^{2} .
  \end{eqnarray*}
  Letting $y= \mathring{\lambda}_{i} + \mathring{\lambda}_{j}$, we have $y
  \geqslant - \sqrt{2 ( \mathring{\lambda}_{i}^{2} +
  \mathring{\lambda}_{j}^{2} )} \geqslant - \sqrt{2P_{1}}$. By Lemma
  \ref{app} (iii), we get $\sqrt{P_{1}} < \sqrt{\mathring{\alpha}} < \frac{| H
  |}{\sqrt{n ( n-1 )}}$. Then the function $\frac{3}{4} y^{2} + \frac{1}{2} |
  H | y$ is increasing for $y \geqslant - \sqrt{2P_{1}}$. Thus $\frac{3}{4}
  y^{2} + \frac{1}{2} | H | y \geqslant \frac{3}{2} P_{1} - \frac{\sqrt{2}}{2}
  | H | \sqrt{P_{1}}$. We obtain
  \begin{equation*}
    | H | \sum_{\tmscript{\begin{array}{c}
      \alpha >1\\
      i \neq j
    \end{array}}} \mathring{\lambda}_{i} ( \mathring{h}^{\alpha}_{i j}
    )^{2} -2 \sum_{\tmscript{\begin{array}{c}
      \alpha >1\\
      i \neq j
    \end{array}}} ( ( \mathring{\lambda}_{i} -
    \mathring{\lambda}_{j} ) \mathring{h}^{\alpha}_{i j} )^{2}
    \geqslant - \left( \tfrac{\sqrt{2}}{2} | H | \sqrt{P_{1}} + \tfrac{5}{2}
    P_{1} \right) Q_{2} .
  \end{equation*}
Applying these estimates, we get
  \begin{eqnarray*}
    W & \geqslant & n c \normho^{2} -P_{1}^{2} + \frac{1}{n} \normho^{2} | H
    |^{2} -2P_{1} Q_{1} - \frac{3}{2} P_{2}^{2} - \frac{5}{2} P_{1} Q_{2} -
    \frac{\sqrt{2}}{2} | H | \sqrt{P_{1}} Q_{2}\\
    &  & - \tfrac{n-2}{\sqrt{n ( n-1 )}} | H | \left( \normho^{3} -
    \frac{1}{2} \normho P_{2} - \sqrt{P_{1}} Q_{2} \right)\\
    & = & n c \normho^{2} - \normho^{4} + \frac{1}{n} \normho^{2} | H |^{2} -
    \tfrac{n-2}{\sqrt{n ( n-1 )}} | H | \normho^{3}\\
    &  & + \frac{1}{2} \left( \tfrac{n-2}{\sqrt{n ( n-1 )}} | H | \normho
    P_{2} -P_{2}^{2} -P_{1} Q_{2} \right) + \left( \tfrac{n-2}{\sqrt{n ( n-1
    )}} - \frac{\sqrt{2}}{2} \right) | H | \sqrt{P_{1}} Q_{2} .
  \end{eqnarray*}
  Since $P_{2}^{2} +P_{1} Q_{2} \leqslant \normho^{2} P_{2} \leqslant
  \tfrac{1}{\sqrt{n ( n-1 )}} | H | \normho P_{2}$, we have
  \begin{eqnarray*}
    W & \geqslant & \normho^{2} \left( n c- \normho^{2} + \frac{1}{n} | H
    |^{2} - \tfrac{n-2}{\sqrt{n ( n-1 )}} | H | \normho \right)\\
    & \geqslant & \normho^{2} \left( n c- \left( \mathring{\alpha} -
    \varepsilon   \omega \right) + \frac{1}{n} | H |^{2} -
    \tfrac{n-2}{\sqrt{n ( n-1 )}} | H | \sqrt{\mathring{\alpha}} \right)\\
    & = & \normho^{2}   \varepsilon   \omega\\
    & \geqslant & \frac{\varepsilon}{4} | H |^{2}   \normho^{2} .
  \end{eqnarray*}

\end{proof}

From (\ref{lapf}), (\ref{aopaodfsdh2}) and Lemma \ref{lapa0}, we have
\begin{eqnarray*}
  \Delta f_{\sigma} & \ge & \frac{f_{\sigma} \Delta
  \normho^{2}}{\normho^{2}} -  ( 1- \sigma )
  \frac{f_{\sigma}}{\mathring{\alpha}} \Delta \mathring{\alpha} -2 ( 1- \sigma
  ) \frac{\left\langle \nabla f_{\sigma} , \nabla \mathring{\alpha}
  \right\rangle}{\mathring{\alpha}}\\
  & \ge & \frac{2f_{\sigma}}{\normho^{2}}   \left\langle \mathring{h} ,
  \nabla^{2} H \right\rangle + \frac{\varepsilon}{2} | H |^{2} f_{\sigma} -  (
  1- \sigma ) \frac{f_{\sigma}}{\mathring{\alpha}} \Delta \mathring{\alpha} -
  \frac{2}{\normho} | \nabla f_{\sigma} | | \nabla H |  .
\end{eqnarray*}
This is equivalent to
\begin{equation} \frac{\varepsilon}{2} | H |^{2} f_{\sigma} \le \Delta f_{\sigma} -
   \frac{2f_{\sigma}}{\normho^{2}}   \left\langle \mathring{h} , \nabla^{2} H
   \right\rangle +  ( 1- \sigma ) \frac{f_{\sigma}}{\mathring{\alpha}} \Delta
   \mathring{\alpha} + \frac{2}{\normho} | \nabla f_{\sigma} | | \nabla H | .
\label{inte1} \end{equation}
We multiply both sides of this inequality by $f_{\sigma}^{p-1}$, then
integrate them over $M_{t}$. From the divergence theorem and the relation
$\nabla_{i} \mathring{h}_{i j} = \frac{n-1}{n}   \nabla_{j} H$, we have
\begin{equation} \int_{M_{t}} f_{\sigma}^{p-1} \Delta f_{\sigma} \mathd \mu_{t} =- ( p-1 )
   \int_{M_{t}} f_{\sigma}^{p-2} | \nabla f_{\sigma} |^{2} \mathd \mu_{t}
   \le 0, \end{equation}
\begin{eqnarray}
  - \int_{M_{t}} \frac{f_{\sigma}^{p}}{\normho^{2}}   \left\langle
  \mathring{h} , \nabla^{2} H \right\rangle \mathd \mu_{t} & = & -
  \int_{M_{t}} \frac{f_{\sigma}^{p-1}}{\mathring{\alpha}^{1- \sigma}}
  \mathring{h}^{\alpha}_{i j} \nabla^{2}_{i,j} H^{\alpha}   \mathd \mu_{t} \nonumber \\
  & = & \int_{M_{t}} \nabla_{i} \left(
  \frac{f_{\sigma}^{p-1}}{\mathring{\alpha}^{1- \sigma}} \mathring{h}_{i
  j}^{\alpha} \right) \nabla_{j} H^{\alpha}   \mathd \mu_{t} \nonumber \\
  & = & \int_{M_{t}} \left[ ( p-1 )
  \frac{f_{\sigma}^{p-2}}{\mathring{\alpha}^{1- \sigma}}
  \mathring{h}^{\alpha}_{i j}   \nabla_{i} f_{\sigma}   \nabla_{j} H^{\alpha}
  \right. \nonumber \\
  &  & \left. - ( 1- \sigma ) \frac{f_{\sigma}^{p-1}}{\mathring{\alpha}^{2-
  \sigma}}   \mathring{h}^{\alpha}_{i j}   \nabla_{i} \mathring{\alpha}
  \nabla_{j} H^{\alpha} + \frac{n-1}{n}
  \frac{f_{\sigma}^{p-1}}{\mathring{\alpha}^{1- \sigma}} | \nabla H |^{2}
  \right] \mathd \mu_{t}\\
  & \le & \int_{M_{t}} \left[ ( p-1 )
  \frac{f_{\sigma}^{p-2}}{\mathring{\alpha}^{1- \sigma}} \normho | \nabla
  f_{\sigma} | | \nabla H | \right. \nonumber \\
  &  & \left. + \frac{f_{\sigma}^{p-1}}{\mathring{\alpha}^{2- \sigma}}
  \normho \left| \nabla \mathring{\alpha} \right| |   \nabla H | +
  \frac{f_{\sigma}^{p-1}}{\mathring{\alpha}^{1- \sigma}} | \nabla H |^{2}
  \right] \mathd \mu_{t} \nonumber \\
  & \le & \int_{M_{t}} \left[ ( p-1 ) \frac{f_{\sigma}^{p-1}}{\normho}
  | \nabla f_{\sigma} | |   \nabla H | + \frac{2f_{\sigma}^{p}}{\normho^{2}}
  |   \nabla H |^{2} \right] \mathd \mu_{t} \nonumber
\end{eqnarray}
and
\begin{eqnarray}
  \int_{M_{t}} \frac{f_{\sigma}^{p}}{\mathring{\alpha}} \Delta
  \mathring{\alpha}   \mathd \mu_{t} & = & - \int_{M_{t}} \left\langle \nabla
  \left( \frac{f_{\sigma}^{p}}{\mathring{\alpha}} \right) , \nabla
  \mathring{\alpha} \right\rangle \mathd \mu_{t} \nonumber \\
  & = & \int_{M_{t}} \left( - \frac{p f_{\sigma}^{p-1}}{\mathring{\alpha}}
  \left\langle \nabla f_{\sigma} , \nabla \mathring{\alpha} \right\rangle +
  \frac{f_{\sigma}^{p}}{\mathring{\alpha}^{2}} \left| \nabla \mathring{\alpha}
  \right|^{2} \right) \mathd \mu_{t}\label{inte2}\\
  & \le & \int_{M_{t}} \left( \frac{p f_{\sigma}^{p-1}}{\normho} |
  \nabla f_{\sigma} | |   \nabla H | + \frac{f_{\sigma}^{p}}{\normho^{2}} |
  \nabla H |^{2} \right) \mathd \mu_{t} . \nonumber
\end{eqnarray}
Putting (\ref{inte1})-(\ref{inte2}) together, we obtain
\begin{equation}
  \frac{\varepsilon}{2} \int_{M_{t}} | H |^{2} f_{\sigma}^{p} \mathd \mu_{t}
  \le \int_{M_{t}} \left( \frac{3p f_{\sigma}^{p-1}}{\normho}   | \nabla
  f_{\sigma} | |   \nabla H |  +  \frac{5f_{\sigma}^{p}}{\normho^{2}}   |
  \nabla H |^{2} \right)   \mathd \mu_{t} . \label{H2fsp}
\end{equation}
From (\ref{H2fsp}) and Lemma \ref{ptf}, we get an estimate for the
time derivative of the integral of $f_{\sigma}^{p}$.
\begin{eqnarray}
  \frac{\mathd}{\mathd t} \int_{M_{t}} f_{\sigma}^{p} \mathd \mu_{t} & = & p
  \int_{M_{t}} f_{\sigma}^{p-1} \frac{\partial f_{\sigma}}{\partial t} \mathd
  \mu_{t} - \int_{M_{t}} f_{\sigma}^{p} | H |^{2} \mathd \mu_{t} \nonumber\\
  & \le & p \int_{M_{t}} \left[ f_{\sigma}^{p-1} \Delta f_{\sigma}  +
  \frac{2 f_{\sigma}^{p-1}}{\normho} | \nabla f_{\sigma} | | \nabla H |
  \right. \nonumber\\
  &  & \left. - \frac{12  \varepsilon  f_{\sigma}^{p}}{\normho^{2}} | \nabla
  H |^{2} -4 c f_{\sigma}^{p} +  \sigma | H |^{2} f_{\sigma}^{p} \right]
  \mathd \mu_{t} \nonumber\\
  & \le & p \int_{M_{t}} \left[ - ( p-1 ) f_{\sigma}^{p-2} | \nabla
  f_{\sigma} |^{2} + \frac{2 f_{\sigma}^{p-1}}{\normho} | \nabla f_{\sigma} |
  | \nabla H | - \frac{12  \varepsilon  f_{\sigma}^{p}}{\normho^{2}} | \nabla
  H |^{2} \right.  \label{dtint}\\
  &  & \left. + \frac{6  \sigma  p}{\varepsilon}
  \frac{f_{\sigma}^{p-1}}{\normho} | \nabla f_{\sigma} | |   \nabla H | +
  \frac{10  \sigma}{\varepsilon}   \frac{f_{\sigma}^{p}}{\normho^{2}}   |
  \nabla H |^{2} -4 c f_{\sigma}^{p} \right] \mathd \mu_{t} \nonumber\\
  & = & p \int_{M_{t}} f_{\sigma}^{p-2} \left[ - ( p-1 ) | \nabla f_{\sigma}
  |^{2} + \left( 2+ \frac{6  \sigma  p}{\varepsilon} \right)
  \frac{f_{\sigma}}{\normho} | \nabla f_{\sigma} | | \nabla H | \right.
  \nonumber\\
  &  & \left. - \left( 12  \varepsilon - \frac{10  \sigma}{\varepsilon}
  \right) \frac{f_{\sigma}^{2}}{\normho^{2}} | \nabla H |^{2} \right] \mathd
  \mu_{t} -4 c p \int_{M_{t}} f_{\sigma}^{p} \mathd \mu_{t} . \nonumber
\end{eqnarray}

Now we show that the $L^{p}$-norm of $f_{\sigma}$ is bounded.

\begin{lemma}
  \label{pnorm}There exists a constant $C_{1}$ depending only on $M_{0}$ such
  that for all $p \ge 1/ \varepsilon$ and $\sigma \le
  \varepsilon^{2} / \sqrt{p}$, we have
  \[ \left( \int_{M_{t}} f_{\sigma}^{p} \mathd \mu_{t} \right)^{\frac{1}{p}}
     <C_{1} . \]
\end{lemma}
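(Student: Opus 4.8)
The plan is to run a Gronwall argument on $t\mapsto\int_{M_t}f_\sigma^p\,\mathd\mu_t$, starting from the differential inequality (\ref{dtint}) already derived above, and then take a $p$-th root so that the factor $p$ cancels out and the resulting bound is uniform in $p$.

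First I would view the integrand of the first integral in (\ref{dtint}) as the quadratic form
\[
Q(X,Y)=-(p-1)X^2+\Bigl(2+\tfrac{6\sigma p}{\varepsilon}\Bigr)XY-\Bigl(12\varepsilon-\tfrac{10\sigma}{\varepsilon}\Bigr)Y^2
\]
in the nonnegative quantities $X=|\nabla f_\sigma|$ and $Y=\frac{f_\sigma}{\normho}|\nabla H|$; the hypotheses $p\ge1/\varepsilon$ and $\sigma\le\varepsilon^2/\sqrt p$ are tailored precisely so that $Q$ is negative semidefinite. Note that $\varepsilon$ is automatically small: since $\normho^2\ge0$, the pinching $\normho^2<\mathring{\alpha}-\varepsilon\omega$ forces $\varepsilon\omega<\mathring{\alpha}$, and Lemma~\ref{app}(iii) together with $n^2c\le4(n-1)c$ gives $\mathring{\alpha}<\omega/(n(n-1))$, whence $\varepsilon<1/(n(n-1))\le\tfrac1{30}$ for $n\ge6$. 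Using $\sigma\le\varepsilon^2/\sqrt p$ and $p\ge1/\varepsilon>30$ one checks that $12\varepsilon-\tfrac{10\sigma}{\varepsilon}\ge10\varepsilon>0$ and that the discriminant condition
\[
\Bigl(2+\tfrac{6\sigma p}{\varepsilon}\Bigr)^2\le(2+6\varepsilon\sqrt p)^2<20\varepsilon p\le 4(p-1)\Bigl(12\varepsilon-\tfrac{10\sigma}{\varepsilon}\Bigr)
\]
holds (the middle inequality by estimating $4\le4\varepsilon p$, $24\varepsilon\sqrt p\le12\varepsilon p$, $36\varepsilon^2p\le\tfrac65\varepsilon p$ separately). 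Hence $Q\le0$ everywhere, the first integral in (\ref{dtint}) is nonpositive, and, writing $g(t)=\int_{M_t}f_\sigma^p\,\mathd\mu_t$, we are left with $g'(t)\le-4cp\,g(t)$.

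Since $c<0$, the coefficient $-4cp$ is positive, so multiplying $g'\le-4cp\,g$ by the integrating factor $\mathe^{4cpt}$ and integrating over $[0,t]$ gives $g(t)\le\mathe^{-4cpt}g(0)$. Taking $p$-th roots, and using that the maximal existence time $T$ is finite (shown above) and $t<T$, yields
\[
\Bigl(\int_{M_t}f_\sigma^p\,\mathd\mu_t\Bigr)^{1/p}\le\mathe^{-4cT}\Bigl(\int_{M_0}f_\sigma^p\,\mathd\mu_0\Bigr)^{1/p}\le\mathe^{-4cT}\,\bigl(\sup_{M_0}f_\sigma\bigr)\Bigl(\int_{M_0}\mathd\mu_0\Bigr)^{1/p}.
\]
Both remaining factors are bounded independently of $p$ and $\sigma$: on $M_0$ the pinching $\normho^2<\mathring{\alpha}-\varepsilon\omega<\mathring{\alpha}$ gives $f_\sigma=\normho^2\mathring{\alpha}^{\sigma-1}<\mathring{\alpha}^{\sigma}\le\max\{1,\sup_{M_0}\mathring{\alpha}\}$, which is finite by compactness of $M_0$, and $\bigl(\int_{M_0}\mathd\mu_0\bigr)^{1/p}\le\max\{1,\int_{M_0}\mathd\mu_0\}$ for $p\ge1$. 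So the lemma holds with $C_1=\mathe^{-4cT}\max\{1,\sup_{M_0}\mathring{\alpha}\}\max\{1,\int_{M_0}\mathd\mu_0\}$, which depends only on $M_0$ (note $T=T(M_0)$).

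The only step requiring genuine care is the discriminant inequality, and the reason it is delicate is that $p$ enters the cross-term of $Q$ linearly through $\tfrac{6\sigma p}{\varepsilon}$; to keep $Q$ negative semidefinite as $p\to\infty$ the parameter $\sigma$ must be forced to decay at least like $p^{-1/2}$, which is exactly the constraint $\sigma\le\varepsilon^2/\sqrt p$ in the statement. Everything else is a routine Gronwall estimate, together with the observation that $\sup_{M_0}f_\sigma$ is controlled uniformly in $\sigma$ because $\normho^2<\mathring{\alpha}$ on $M_0$.
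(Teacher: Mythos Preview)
Your proof is correct and follows essentially the same route as the paper: both show the quadratic form in (\ref{dtint}) is negative semidefinite via a discriminant estimate, deduce $\frac{\mathd}{\mathd t}\int_{M_t}f_\sigma^p\,\mathd\mu_t\le-4cp\int_{M_t}f_\sigma^p\,\mathd\mu_t$, and finish by Gronwall and the finiteness of $T$. Your argument is in fact slightly more careful than the paper's---you justify that $\varepsilon<1/(n(n-1))\le1/30$ is automatic from the pinching (the paper simply writes ``with $\varepsilon$ small enough''), and you make the uniformity in $p$ of the final bound explicit.
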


\begin{proof}
  The expression in the square bracket of the right hand side of (\ref{dtint})
  is a quadratic form. With $\varepsilon$ small enough, its discriminant
  satisfies
  \begin{eqnarray*}
    &  & \left( 2+ \frac{6  \sigma  p}{\varepsilon} \right)^{2} -4 ( p-1 )
    \left( 12  \varepsilon - \frac{10  \sigma}{\varepsilon} \right)\\
    & < & \left( 2+6  \sqrt{p}   \varepsilon \right)^{2} -24p  \varepsilon\\
    & \le & 8+72p  \varepsilon^{2} -24p  \varepsilon\\
    & < & 0.
  \end{eqnarray*}
  Therefore, this quadratic form is non-positive. Now we have
  \[ \frac{\mathd}{\mathd t} \int_{M_{t}} f_{\sigma}^{p} \mathd \mu_{t}
     \le -4 c p \int_{M_{t}} f_{\sigma}^{p} \mathd \mu_{t} . \]
  This implies $\int_{M_{t}} f_{\sigma}^{p} \mathd \mu_{t} \le e^{-4c p
  t} \int_{M_{0}} f_{\sigma}^{p} \mathd \mu_{0}$. Thus, the assertion follows
  from the finiteness of $T$.
\end{proof}

\begin{corollary}
  \label{pnorm2}There exists a constant $C_{2}$ depending only on $M_{0}$ such
  that for all $r \ge 1$, $p \ge 4 r^{2}  / \varepsilon^{4}$ and
  $\sigma \le   \frac{1}{2}   \varepsilon^{2}  / \sqrt{p}$, we have
  \[ \left( \int_{M_{t}} | H |^{2r}  f_{\sigma}^{p} \mathd \mu_{t}
     \right)^{\frac{1}{p}} <C_{2} . \]
\end{corollary}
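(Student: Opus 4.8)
The plan is to bootstrap from Lemma~\ref{pnorm} using an interpolation argument together with the estimate already recorded in~(\ref{H2fsp}). First I would observe that~(\ref{H2fsp}), combined with the Cauchy--Schwarz inequality to absorb the mixed gradient term, yields a bound of the shape
\[
  \int_{M_{t}} |H|^{2} f_{\sigma}^{p}\,\mathd\mu_{t}
  \le \frac{C}{\varepsilon}\int_{M_{t}} \frac{f_{\sigma}^{p}}{\normho^{2}}|\nabla H|^{2}\,\mathd\mu_{t}
      + \frac{C p^{2}}{\varepsilon}\int_{M_{t}} f_{\sigma}^{p-2}|\nabla f_{\sigma}|^{2}\,\mathd\mu_{t},
\]
so the real task is to control the two gradient integrals on the right. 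For the first, I would go back to~(\ref{dtint}): since the quadratic form there is strictly negative-definite for $\sigma\le\varepsilon^{2}/\sqrt p$, one does not merely get $\tfrac{\mathd}{\mathd t}\int f_{\sigma}^{p}\le -4cp\int f_{\sigma}^{p}$, but in fact picks up a definite negative multiple of $\int f_{\sigma}^{p-2}|\nabla f_{\sigma}|^{2}$ and of $\int \tfrac{f_{\sigma}^{p}}{\normho^{2}}|\nabla H|^{2}$. Integrating that differential inequality over $[0,T)$ (again using finiteness of $T$ and the already-established bound on $\int_{M_{0}}f_{\sigma}^{p}\,\mathd\mu_{0}$) gives
\[
  \int_{0}^{T}\!\!\int_{M_{t}} \Big( |\nabla f_{\sigma}|^{2}f_{\sigma}^{p-2}
  + \frac{f_{\sigma}^{p}}{\normho^{2}}|\nabla H|^{2}\Big)\mathd\mu_{t}\,\mathd t
  \le C(p),
\]
a space-time estimate.

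The difficulty is that~(\ref{H2fsp}) is a \emph{pointwise-in-time} identity whereas the gradient control from~(\ref{dtint}) is only integrated in time. To turn the space-time bound into a time-slice bound I would instead keep the $-4cp\int f_{\sigma}^{p}$ term on the right of~(\ref{dtint}) but \emph{not} throw away the negative quadratic part: write $g = f_{\sigma}^{p/2}$, so that $|\nabla g|^{2}\sim p^{2}f_{\sigma}^{p-2}|\nabla f_{\sigma}|^{2}$, and rewrite the quadratic-form term as a bound $\le -\kappa\int|\nabla g|^{2}$ for some $\kappa>0$ depending on $\varepsilon$. Then~(\ref{dtint}) reads
\[
  \frac{\mathd}{\mathd t}\int_{M_{t}} g^{2}\,\mathd\mu_{t}
  \le -\frac{\kappa}{p}\int_{M_{t}}|\nabla g|^{2}\,\mathd\mu_{t}
      + C\int_{M_{t}} g^{2}\,\mathd\mu_{t},
\]
while~(\ref{H2fsp}) with the same substitution and Cauchy--Schwarz gives, at each fixed $t$,
\[
  \frac{\varepsilon}{2}\int_{M_{t}} |H|^{2} g^{2}\,\mathd\mu_{t}
  \le \frac{C}{\varepsilon}\Big(\frac{1}{p}\int_{M_{t}}|\nabla g|^{2}\,\mathd\mu_{t}
      + \int_{M_{t}} \frac{g^{2}}{\normho^{2}}|\nabla H|^{2}\,\mathd\mu_{t}\Big).
\]
Here I would use Lemma~\ref{lapa0} and~(\ref{inte1}) once more to bound $\int \tfrac{g^{2}}{\normho^{2}}|\nabla H|^{2}$ by $\int|\nabla g|^{2}/p$ plus lower-order terms, exactly as was done to pass from~(\ref{inte1}) to~(\ref{H2fsp}); this closes the loop and produces
\[
  \int_{M_{t}} |H|^{2} f_{\sigma}^{p}\,\mathd\mu_{t}
  \le \frac{C}{\varepsilon p}\int_{M_{t}}|\nabla g|^{2}\,\mathd\mu_{t}
  \le \frac{C}{\varepsilon p}\Big( C\int_{M_{t}} f_{\sigma}^{p}\,\mathd\mu_{t}
      - \frac{\mathd}{\mathd t}\int_{M_{t}} f_{\sigma}^{p}\,\mathd\mu_{t}\Big).
\]

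Finally I would integrate this last inequality in $t$ over $[0,T)$: the time-derivative term telescopes to the boundary values $\int_{M_{0}}f_{\sigma}^{p}\,\mathd\mu_{0}$ (finite, from the initial pinching) minus $\int_{M_{T}}f_{\sigma}^{p}\,\mathd\mu_{T}\ge0$, and the remaining term $\int_{0}^{T}\!\int_{M_{t}}f_{\sigma}^{p}$ is bounded by $T\cdot\sup_{t}\int_{M_{t}}f_{\sigma}^{p}\le C_{1}^{p}\,T$ via Lemma~\ref{pnorm}. Hence $\int_{0}^{T}\!\int_{M_{t}}|H|^{2}f_{\sigma}^{p}\,\mathd\mu_{t}\,\mathd t\le C(p)$, which is still a space-time estimate; to upgrade to the pointwise-in-time statement of the Corollary I would run the same computation but with $|H|^{2r}f_{\sigma}^{p}$ in place of $|H|^{2}f_{\sigma}^{p}$, using the evolution inequality $\dt|H|^{2}\le\Delta|H|^{2}+2R_{2}+2nc|H|^{2}$ and Lemma~\ref{R12}(ii) to bound $R_{2}\le(\mathring\alpha+\tfrac1n|H|^{2})|H|^{2}\le C|H|^{4}$, so that $|H|^{2r}$ itself satisfies a reaction--diffusion inequality with right-hand side controlled by $|H|^{2r+2}$; feeding the parameter choice $p\ge 4r^{2}/\varepsilon^{4}$, $\sigma\le\tfrac12\varepsilon^{2}/\sqrt p$ into the quadratic-form discriminant (with the extra factor $r$ now appearing exactly where the $\sqrt p$ margin was) keeps the form negative-definite, and the same integration-in-time and Lemma~\ref{pnorm} input give the uniform bound $\big(\int_{M_{t}}|H|^{2r}f_{\sigma}^{p}\,\mathd\mu_{t}\big)^{1/p}<C_{2}$. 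The main obstacle I anticipate is keeping track of the $p$- and $r$-dependence of all constants carefully enough that the discriminant condition survives — in particular verifying that the extra $|H|^{2r}$ weight only shifts the required lower bound on $p$ by a factor $r^{2}$ and does not destroy the strict negativity, which is precisely why the hypothesis is stated as $p\ge 4r^{2}/\varepsilon^{4}$ rather than $p\ge 1/\varepsilon$.
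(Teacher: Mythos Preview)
Your approach is vastly more complicated than necessary, and the final ``upgrade'' step from a space--time bound to a pointwise-in-time bound is not actually carried out: you end with $\int_{0}^{T}\!\int_{M_{t}}|H|^{2}f_{\sigma}^{p}\le C(p)$ and then assert that redoing the computation with the weight $|H|^{2r}$ and the evolution inequality for $|H|^{2r}$ will give the sup-in-time statement, but you never derive a closed differential inequality for $t\mapsto\int_{M_{t}}|H|^{2r}f_{\sigma}^{p}\,\mathd\mu_{t}$ itself, and the sketch you give still only produces space--time integrals after integration.

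The paper's proof is a two-line algebraic trick that you have missed entirely. Since $|H|^{2}+n^{2}c>\delta$ is preserved (Corollary after Theorem~\ref{pinch}), the ratio $|H|^{2}/\mathring{\alpha}$ is bounded above by a constant $C(\delta,n)$; hence $|H|^{2r}\le C(\delta,n)^{r}\mathring{\alpha}^{r}$. Substituting this into the integrand and recalling $f_{\sigma}=\normho^{2}/\mathring{\alpha}^{1-\sigma}$ gives
\[
|H|^{2r}f_{\sigma}^{p}\le C(\delta,n)^{r}\,\mathring{\alpha}^{r}\cdot\frac{\normho^{2p}}{\mathring{\alpha}^{(1-\sigma)p}}
= C(\delta,n)^{r}\,f_{\sigma+r/p}^{\,p}.
\]
Now the hypotheses $p\ge 4r^{2}/\varepsilon^{4}$ and $\sigma\le\tfrac12\varepsilon^{2}/\sqrt{p}$ are exactly what is needed to ensure $r/p\le\tfrac12\varepsilon^{2}/\sqrt{p}$ and hence $\sigma+r/p\le\varepsilon^{2}/\sqrt{p}$, so Lemma~\ref{pnorm} applies directly with the shifted exponent $\sigma+r/p$ and gives the pointwise-in-time bound immediately. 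No gradient estimates, no evolution inequalities, no space--time integration are needed; the entire content of the corollary is that the extra $|H|^{2r}$ can be absorbed into a harmless shift of $\sigma$.
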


\begin{proof}
  Since the constant $C ( \delta ,n ) = \sup \left\{ \frac{| H
  |^{2}}{\mathring{\alpha}}   \,\middle|\,   | H |^{2} >-n^{2} c+ \delta \right\}$
  is finite, we have
  \[ \left( \int_{M_{t}} | H |^{2r}  f_{\sigma}^{p} \mathd \mu_{t}
     \right)^{\frac{1}{p}} \le \left( \int_{M_{t}} \left( C ( \delta ,n
     ) \mathring{\alpha} \right)^{r}  f_{\sigma}^{p} \mathd \mu_{t}
     \right)^{\frac{1}{p}} \le C ( \delta ,n )^{\frac{r}{p}} \left(
     \int_{M_{t}}  f_{\sigma + \frac{r}{p}}^{p}   \mathd \mu_{t}
     \right)^{\frac{1}{p}} . \]
  With $r/p \le \frac{1}{2} \varepsilon^{2}   / \sqrt{p}$ and $\sigma
  +r/p \le   \varepsilon^{2}   / \sqrt{p}$, the conclusion follows from
  Lemma \ref{pnorm}.
\end{proof}

We are now in a position to complete the proof of Theorem 4.1.
\\ \textit{Proof of Theorem 4.1.} For all $k>0$, we define $f_{\sigma
,k} = \max \{ f_{\sigma} -k,0 \}$, $A ( k ) = \{ x \in M_{t}   \,|\,
f_{\sigma} ( x ) >k \}$. From Lemma \ref{ptf} and $p \ge 1/
\varepsilon$, we have
\begin{eqnarray*}
  \dt \int_{M_{t}} f_{\sigma ,k}^{p} \mathd \mu_{t} & \le & p
  \int_{M_{t}} f_{\sigma ,k}^{p-1} \left( \Delta f_{\sigma} + \tfrac{2 |
  \nabla f_{\sigma} | | \nabla H |}{\normho} - \tfrac{12  \varepsilon
  f_{\sigma} | \nabla H |^{2}}{\normho^{2}} -4 c f_{\sigma} + \sigma | H |^{2}
  f_{\sigma} \right) \mathd \mu_{t}\\
  & \le & p \int_{M_{t}}   \left[ - ( p-1 ) f_{\sigma ,k}^{p-2} |
  \nabla f_{\sigma} |^{2} + \tfrac{2 f_{\sigma ,k}^{p-1} | \nabla f_{\sigma} |
  | \nabla H |}{\normho} - \tfrac{12  \varepsilon f_{\sigma ,k}^{p} | \nabla H
  |^{2}}{\normho^{2}} \right] \mathd \mu_{t}\\
  &  & + p \int_{A ( k )} (  -4 c+ \sigma   | H |^{2} ) f_{\sigma}^{p} \mathd
  \mu_{t}\\
  & \le & - \frac{1}{2} p ( p-1 ) \int_{M_{t}} f_{\sigma ,k}^{p-2} |
  \nabla f_{\sigma} |^{2} \mathd \mu_{t} + p \int_{A ( k )} ( \sigma | H |^{2}
  -4 c ) f_{\sigma}^{p} \mathd \mu_{t} .
\end{eqnarray*}
We have $\frac{1}{2} p ( p-1 ) f_{\sigma ,k}^{p-2} | \nabla
f_{\sigma} |^{2} \ge | \nabla f_{\sigma ,k}^{p/2} |^{2}$. Putting
$v=f_{\sigma ,k}^{p/2}$, we get
\begin{equation}
  \dt \int_{M_{t}} v^{2} \mathd \mu_{t} + \int_{M_{t}} | \nabla v |^{2} \mathd
  \mu_{t} \le  p \int_{A ( k )} ( \sigma | H |^{2} -4 c ) f_{\sigma}^{p}
  \mathd \mu_{t} . \label{ineq1}
\end{equation}

From Theorem 2.1 of {\cite{MR0365424}}, we see that if $u$ is a
non-negative $C^{1}$-function on $M_{t}$, then the following Sobolev
inequality holds: $( \int_{M_{t}} u^{\frac{n}{n-1}} \mathd \mu_{t}
)^{\frac{n-1}{n}} \le C_{3} \int_{M_{t}} ( | \nabla u | +u | H | )
\mathd \mu_{t}$, where $C_{3}$ is a positive constant depending only
on $n$. Replacing $u$ by $v^{2 ( n-1 ) / ( n-2 )}$ and using
H{\"o}lder's inequality, we obtain
\[ \left( \int_{M_{t}} v^{\frac{2n}{n-2}} \mathd \mu_{t}
   \right)^{\frac{n-2}{n}} \le C_{3} \int_{M_{t}} | \nabla v |^{2}
   \mathd \mu_{t} +C_{3} \left( \int_{A ( k )} | H |^{n} \mathd \mu_{t}
   \right)^{\frac{2}{n}} \left( \int_{M_{t}} v^{\frac{2n}{n-2}} \mathd \mu_{t}
   \right)^{\frac{n-2}{n}} . \]
When $k \ge C_{2} ( 2 C_{3} )^{\frac{n}{2p}}$, $p \ge n^{2} /
\varepsilon^{4}$ and $\sigma \le   \frac{1}{2}   \varepsilon^{2} /
\sqrt{p}$, it follows from Corollary \ref{pnorm2} that $( \int_{A (
k )} | H |^{n} \mathd \mu_{t} )^{\frac{2}{n}} \le ( \int_{A ( k )} |
H |^{n} f_{\sigma}^{p} k^{-p} \mathd \mu_{t} )^{\frac{2}{n}} <
\frac{1}{2 C_{3}}$. Thus, we have
\begin{equation}
  \frac{1}{2 C_{3}} \left( \int_{M_{t}} v^{\frac{2n}{n-2}} \mathd \mu_{t}
  \right)^{\frac{n-2}{n}} \le \int_{M_{t}} | \nabla v |^{2} \mathd
  \mu_{t} . \label{vlessdv}
\end{equation}

It follows from (\ref{ineq1}) and (\ref{vlessdv}) that
\[ \dt \int_{M_{t}} v^{2} \mathd \mu_{t} + \frac{1}{2 C_{3}} \left(
   \int_{M_{t}} v^{\frac{2n}{n-2}} \mathd \mu_{t} \right)^{\frac{n-2}{n}}
   \le  p \int_{A ( k )} ( \sigma | H |^{2} -4 c ) f_{\sigma}^{p} \mathd
   \mu_{t} . \]
Letting $k \ge \sup_{M_{0}} f_{\sigma}$, we have $v=0$ at $t=0$.
Integrating the both sides of the above inequality, we get
\begin{eqnarray}
  \underset{[ 0,T ]}{\sup} \int_{M_{t}} v^{2} \mathd \mu_{t} & + & \frac{1}{2
  C_{3}} \int_{0}^{T} \left( \int_{M_{t}} v^{\frac{2n}{n-2}} \mathd \mu_{t}
  \right)^{\frac{n-2}{n}} \mathd t \nonumber\\
  & \le & 2 p \int_{0}^{T} \int_{A ( k )} ( \sigma | H |^{2} -4 c )
  f_{\sigma}^{p} \mathd \mu_{t}   \mathd t.  \label{firstof5}
\end{eqnarray}
Let $\| A ( k ) \| = \int_{0}^{T} \int_{A ( k )} \mathd \mu_{t}
\mathd t$. For a positive number $r> \frac{n+2}{2}$, let $p \ge 4r/
\varepsilon^{4}$, $\sigma \le \frac{1}{2} \varepsilon^{2} / \sqrt{p
r}$. Applying H{\"o}lder's inequality and Lemma \ref{pnorm}, we have
\begin{equation}
  \int_{0}^{T} \int_{A ( k )} f_{\sigma}^{p} \mathd \mu_{t}   \mathd t
  \le C_{1}^{p}  T^{\frac{1}{r}}   \| A ( k ) \|^{1- \frac{1}{r}} .
\end{equation}
By H{\"o}lder's inequality and Corollary \ref{pnorm2}, we have
\begin{equation}
  \int_{0}^{T} \int_{A ( k )} | H |^{2} f_{\sigma}^{p} \mathd \mu_{t}   \mathd
  t \le C_{2}^{p}  T^{\frac{1}{r}}   \| A ( k ) \|^{1- \frac{1}{r}} .
\end{equation}
For $h>k$, we have $f_{\sigma ,k} >h-k$ on $A ( h )$. Thus
\begin{equation}
  ( h-k )^{p} \| A ( h ) \| \le \int_{0}^{T} \int_{M_{t}} v^{2} \mathd
  \mu_{t}   \mathd t \le \left( \int_{0}^{T} \int_{M_{t}} v^{\frac{2 (
  n+2 )}{n}} \mathd \mu_{t}   \mathd t \right)^{\frac{n}{n+2}} \| A ( k )
  \|^{\frac{2}{n+2}} .
\end{equation}
We estimate the right hand side of the above inequality as follows.
\begin{eqnarray}
  &  & \left( \int_{0}^{T} \int_{M_{t}} v^{\frac{2 ( n+2 )}{n}} \mathd
  \mu_{t}   \mathd t \right)^{\frac{n}{n+2}} \nonumber\\
  & \le & \left[ \int_{0}^{T} \left( \int_{M_{t}} v^{2} \mathd \mu_{t}
  \right)^{\frac{2}{n}} \left( \int_{M_{t}} v^{\frac{2n}{n-2}} \mathd \mu_{t}
  \right)^{\frac{n-2}{n}} \mathd t \right]^{\frac{n}{n+2}} \nonumber\\
  & \le & \left( \underset{[ 0,T ]}{\sup} \int_{M_{t}} v^{2} \mathd
  \mu_{t} \right)^{\frac{2}{n+2}} \left[ \int_{0}^{T} \left( \int_{M_{t}}
  v^{\frac{2n}{n-2}} \mathd \mu_{t} \right)^{\frac{n-2}{n}} \mathd t
  \right]^{\frac{n}{n+2}}  \label{fifthof5}\\
  & \le & \frac{2}{n+2}   \underset{[ 0,T ]}{\sup} \int_{M_{t}} v^{2}
  \mathd \mu_{t} + \frac{n}{n+2}   \int_{0}^{T} \left( \int_{M_{t}}
  v^{\frac{2n}{n-2}} \mathd \mu_{t} \right)^{\frac{n-2}{n}} \mathd t
  \nonumber\\
  & \le & C_{4} \left[ \underset{[ 0,T ]}{\sup} \int_{M_{t}} v^{2}
  \mathd \mu_{t} + \frac{1}{2 C_{3}}   \int_{0}^{T} \left( \int_{M_{t}}
  v^{\frac{2n}{n-2}} \mathd \mu_{t} \right)^{\frac{n-2}{n}} \mathd t \right] .
  \nonumber
\end{eqnarray}
Putting inequalities (\ref{firstof5})-(\ref{fifthof5}) together, for all $h>k
\ge \max \left\{ C_{2} ( 2 C_{3} )^{\frac{n}{2p}} , \sup_{M_{0}}
f_{\sigma} \right\}$, we have
\[ ( h-k )^{p} \| A ( h ) \| \le   C_{4}   \| A ( k ) \|^{1- \frac{1}{r}
   + \frac{2}{n+2}} , \]
where $C_{4}$ is a positive constant depending on $M_{0}$, $p$ and $r$.

By a lemma of {\cite{MR1786735}} (Chapter II, Lemma B.1), there
exists a finite number $k_{1}$, such that $\| A ( k_{1} ) \| =0$.
Therefore, the assertion follows from the definition of $A ( k )$
and Lemma \ref{app} (iii). This completes the proof of Theorem
\ref{sa0h2}.
\hfill \qedsymbol

\section{A gradient estimate}

To compare the mean curvature at different points of $M_{t}$, we
need derive an estimate for the gradient of mean curvature.

\begin{theorem}
  \label{dH2}For all $\eta \in \left( 0, \frac{1}{n} \right)$, there exists a
  constant $C ( \eta )$ independent of t, such that
  \[ | \nabla H | < \eta^{2}   | H |^{2} +C ( \eta ) . \]
\end{theorem}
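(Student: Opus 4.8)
The plan is to adapt the gradient estimate of Huisken and of Andrews--Baker (\cite{MR772132,MR2739807,MR3078951}) to the present, sharper pinching; the essential new ingredient is Theorem \ref{sa0h2}, which makes $\normho^{2}$ grow strictly slower than $|H|^{2}$. First I would record the reaction--diffusion inequality for $|\nabla h|^{2}$ along the flow in $\mathbb{H}^{n+q}(c)$: differentiating the Simons-type identity (the ambient curvature is parallel, so only Codazzi-type and $c$-correction terms occur) yields constants $c_{1},c_{2}>0$, depending only on $n$ and $c$, with
\[
  \left( \dt - \Delta \right) |\nabla h|^{2} \le -2 |\nabla^{2} h|^{2} + c_{1} |h|^{2} |\nabla h|^{2} + c_{2} |\nabla h|^{2}.
\]
Since the pinching is preserved (Theorem \ref{pinch}) and $\normho^{2} < \mathring{\alpha} < \tfrac{|H|^{2}+n^{2}c}{n(n-1)}$ by Lemma \ref{app}(iii), one has $|h|^{2} = \normho^{2} + \tfrac1n |H|^{2} \le \tfrac{1}{n-1}|H|^{2}$, so the cubic term is controlled by $\tfrac{c_{1}}{n-1}|H|^{2}|\nabla h|^{2}$.

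Next, for a small $\epsilon > 0$ and large constants $\Lambda, N$ to be fixed (in terms of $n$, $c$, $\varepsilon$, $C_{0}$, $\sigma$), I would study the Huisken-type function
\[
  g_{\epsilon} = |\nabla h|^{2} + N \normho^{2} \bigl( |H|^{2} + \Lambda \bigr) - \epsilon \bigl( |H|^{2} + \Lambda \bigr)^{2}.
\]
Using Lemma \ref{evo}, the middle term contributes $-2N(|H|^{2}+\Lambda)|\nabla h|^{2}$ (from $\left(\dt-\Delta\right)\normho^{2}$ containing $-2|\nabla h|^{2}$), which strictly outweighs the positive $\tfrac{2N}{n}(|H|^{2}+\Lambda)|\nabla H|^{2} \le \tfrac{2(n+2)N}{3n}(|H|^{2}+\Lambda)|\nabla h|^{2}$ coming along with it (Lemma \ref{dA2}(i)); its reaction part is $\le C|H|^{6-2\sigma} + C$ after applying Lemma \ref{R12} and Theorem \ref{sa0h2} (it is exactly the bound $\normho^{2} \le C_{0}|H|^{2(1-\sigma)}$ that forces this to stay \emph{strictly} below order $|H|^{6}$), and the cross term $-2\langle \nabla\normho^{2}, \nabla|H|^{2}\rangle$ is absorbed by Young's inequality, again using that $\normho$ is small. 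The last term of $g_{\epsilon}$ contributes, via $\left(\dt-\Delta\right)|H|^{2}$ containing $2R_{2}$ with $R_{2} \ge \tfrac1n|H|^{4} - \normho^{2}|H|^{2}$, a favourable $-\tfrac{4\epsilon}{n}|H|^{6}$ together with errors $O(\epsilon|H|^{4}) + O\bigl(\epsilon(|H|^{2}+\Lambda)|\nabla H|^{2}\bigr)$. Choosing $N$ (and $\Lambda$) large enough to beat $c_{1}/(n-1)$ and $c_{2}$, and then $\epsilon$ small enough that $O\bigl(\epsilon(|H|^{2}+\Lambda)|\nabla H|^{2}\bigr)$ stays below $N(|H|^{2}+\Lambda)|\nabla h|^{2}$ and that $C|H|^{6-2\sigma}+O(\epsilon|H|^{4})$ stays below $\tfrac{4\epsilon}{n}|H|^{6}$ for $|H|$ large, one arrives at $\left( \dt - \Delta \right) g_{\epsilon} \le C_{3}$ on all of $M_{t}$ (the regime of bounded $|H|$ being harmless, as every offending term is then a priori bounded). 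Since $M_{0}$ is compact and the maximal time $T$ is finite, the maximum principle gives $g_{\epsilon} \le \sup_{M_{0}} g_{\epsilon} + C_{3}T =: C_{4}$, hence $|\nabla h|^{2} \le C_{4} + \epsilon(|H|^{2}+\Lambda)^{2} \le 2\epsilon|H|^{4} + C_{5}$. Finally Lemma \ref{dA2}(i) gives $|\nabla H|^{2} \le \tfrac{n+2}{3}|\nabla h|^{2}$, so
\[
  |\nabla H| \le \sqrt{\tfrac{2(n+2)\epsilon}{3}}\,|H|^{2} + \sqrt{\tfrac{(n+2)C_{5}}{3}};
\]
given $\eta \in \bigl(0,\tfrac1n\bigr)$, taking $\epsilon = \tfrac{3\eta^{4}}{2(n+2)}$ and enlarging the constant slightly to make the inequality strict yields the claim.

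The main obstacle is the coefficient bookkeeping in the second step: the $-2N(|H|^{2}+\Lambda)|\nabla h|^{2}$ produced by the middle term must strictly dominate \emph{both} the $\tfrac{c_{1}}{n-1}|H|^{2}|\nabla h|^{2}$ from $\left(\dt-\Delta\right)|\nabla h|^{2}$ \emph{and} the extra $O\bigl(\epsilon(|H|^{2}+\Lambda)|\nabla H|^{2}\bigr)$ generated by the $-\epsilon(|H|^{2}+\Lambda)^{2}$ piece, while at the same time the whole reaction part of $g_{\epsilon}$ must be swallowed by $-\tfrac{4\epsilon}{n}|H|^{6}$. This is precisely where Theorem \ref{sa0h2}, and not merely the preservation of pinching, is indispensable: with only $\normho^{2} < \mathring{\alpha}$ the reaction part would be a full constant multiple of $|H|^{6}$, which no fixed small $\epsilon$ could absorb. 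The sign of the cross term $-2\langle\nabla\normho^{2},\nabla|H|^{2}\rangle$ and the low-order terms for bounded $|H|$ require some care, but both are controlled by the smallness of $\normho$ and by the finiteness of $T$.
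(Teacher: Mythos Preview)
Your argument is correct and follows essentially the same route as the paper: both build an auxiliary function of the shape $(\text{gradient term}) + (\text{large constant})\cdot\normho^{2}\cdot(|H|^{2}+\text{const}) - (\text{small constant})\cdot|H|^{4}$, use Theorem~\ref{sa0h2} to make the reaction part $O(|H|^{6-2\sigma})$ so that the $-|H|^{6}$ term wins, and conclude by the maximum principle on the finite time interval. The only cosmetic difference is that the paper works with $|\nabla H|^{2}$ directly (via Lemma~\ref{dtdH2}) rather than $|\nabla h|^{2}$, which slightly streamlines the coefficient bookkeeping but is otherwise equivalent.
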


Firstly, we make an estimate for the time derivative of the gradient
of mean curvature.

\begin{lemma}
  \label{dtdH2}There exists a constant $B_{1} >1$ depending only on $n$, such
  that
  \[ \dt | \nabla H |^{2} \le \Delta | \nabla H |^{2} +B_{1} | H |^{2} |
     \nabla h |^{2} . \]
\end{lemma}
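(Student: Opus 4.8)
The plan is to obtain the inequality by differentiating the Simons-type evolution equation for the second fundamental form. Along the flow in $\mathbb{H}^{n+q}(c)$ one has the universal identity (the same one, valid in any space form, that underlies Lemma~\ref{evo}; cf.\ \cite{baker2011mean,MR3078951}), schematically
\[
   \left(\dt-\Delta\right)h \;=\; h\ast h\ast h \;+\; c\,h \;+\; c\,g\ast H,
\]
where $\ast$ denotes a metric contraction and universal constants are suppressed. I would differentiate this once covariantly, commute $\nabla$ past $\Delta$ by means of the Gauss and Ricci equations---the intrinsic curvature of $M_{t}$ is $h\ast h+c\,g\ast g$ and the normal curvature is $h\ast h$, so $\nabla\mathrm{Rm}$ is of the form $\nabla h\ast h$, with no $c$-factor---and account for the $t$-derivatives of the induced metric (hence of its inverse), of the Christoffel symbols, and of the normal connection, each of which is of the form $\nabla(h\ast H)=\nabla h\ast H+h\ast\nabla H$. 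Contracting the resulting identity (to pass from $\nabla h$ to $\nabla H=\operatorname{tr}_{g}\nabla h$) and pairing with $\nabla H$ gives
\[
   \left(\dt-\Delta\right)|\nabla H|^{2} \;=\; -2|\nabla^{2}H|^{2} \;+\; \mathcal{R},
\]
where every summand of the reaction term $\mathcal{R}$ carries two derivative factors drawn from $\{\nabla h,\nabla H\}$ together with a coefficient of type $h\ast h$, $h\ast H$, or $c$; in particular $|\mathcal{R}|\le c_{n}\bigl(|h|^{2}+|h|\,|H|+|c|\bigr)\bigl(|\nabla h|^{2}+|\nabla H|^{2}\bigr)$ for a constant $c_{n}=c_{n}(n)$.

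It then remains to discard $-2|\nabla^{2}H|^{2}\le0$, to replace $|\nabla H|^{2}\le\frac{n+2}{3}|\nabla h|^{2}$ by Lemma~\ref{dA2}(i), and to dominate the coefficient $|h|^{2}+|h|\,|H|+|c|$ by a dimensional multiple of $|H|^{2}$. For the last point the preservation of the pinching is decisive: by Theorem~\ref{pinch} we have $\normho^{2}<\mathring{\alpha}$ on $[0,T)$, so by Lemma~\ref{app}(iii),
\[
   |h|^{2}=\normho^{2}+\frac{1}{n}|H|^{2}<\frac{|H|^{2}+n^{2}c}{n(n-1)}+\frac{|H|^{2}}{n}=\frac{|H|^{2}+nc}{n-1}<\frac{1}{n-1}|H|^{2},
\]
and hence $|h|\,|H|<\frac{1}{\sqrt{n-1}}|H|^{2}$; likewise $|H|^{2}+n^{2}c>0$ is preserved, so $|c|<\frac{1}{n^{2}}|H|^{2}$. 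Therefore $|h|^{2}+|h|\,|H|+|c|\le c_{n}'|H|^{2}$ with $c_{n}'=c_{n}'(n)$, and collecting constants yields $\mathcal{R}\le B_{1}|H|^{2}|\nabla h|^{2}$ with $B_{1}=B_{1}(n)$; after enlarging $B_{1}$ we may assume $B_{1}>1$, which is the claim.

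The main obstacle is the first step: the careful bookkeeping needed to assemble $\left(\dt-\Delta\right)|\nabla H|^{2}$ from the reaction term of the Simons equation, the two $[\nabla,\Delta]$ commutator terms, and the variations of the metric and of the two connections, and to verify that every resulting summand really has the stated cubic structure (two derivative factors, and a coefficient that is $h\ast h$, $h\ast H$, or $c$). The conceptual reason $B_{1}$ can be made to depend on $n$ alone---not on $c$, nor on the positive lower bound for $|H|^{2}+n^{2}c$---is that the scale-invariant consequences of the pinching, $|h|^{2}<\frac{1}{n-1}|H|^{2}$ and $|c|<\frac{1}{n^{2}}|H|^{2}$, absorb every lower-order coefficient into $|H|^{2}$. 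In particular the $c\,|\nabla H|^{2}$ contributions coming from the $c$-linear part of the Simons equation and from the intrinsic Ricci curvature in the commutator are dominated, regardless of sign, by $\frac{1}{n^{2}}|H|^{2}|\nabla H|^{2}\le\frac{n+2}{3n^{2}}|H|^{2}|\nabla h|^{2}$.
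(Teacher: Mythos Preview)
Your argument is correct and follows the paper's approach: derive the evolution of $|\nabla H|^{2}$ in schematic form, drop $-2|\nabla^{2}H|^{2}$, and absorb the reaction term via the preserved pinching $|h|^{2}<\tfrac{1}{n-1}|H|^{2}$. The paper is marginally more direct---it starts from the evolution of $H$ rather than of $h$, obtaining $\nabla_{\partial_{t}}\nabla H=\Delta\nabla H+c\,\nabla H+h\ast h\ast\nabla h$ with less commutator bookkeeping, and since $c<0$ the term $2c|\nabla H|^{2}$ is simply discarded rather than bounded via $|c|<\tfrac{1}{n^{2}}|H|^{2}$.
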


\begin{proof}

  The evolution equation of $H$ is
  \begin{equation}
    \nabla_{\partial_{t}} H= \Delta H+n c H+H^{\alpha} h^{\alpha}_{i j} h_{i
    j} .
  \end{equation}
  Let $u$ be a tangent vector field on $M_{t}$ satisfying $[ \partial_{t} ,u ]
  =0$. Then we have $\nabla_{\partial_{t}} u= ( \bar{\nabla}_{\partial_{t}} u
  )^{\top} = ( \bar{\nabla}_{u} \partial_{t} )^{\top} =- \langle H,h ( u,e_{i}
  ) \rangle e_{i}$.

  Using the timelike Ricci equation (see (16) of {\cite{MR2739807}}), we
  obtain
  \[ \nabla_{\partial_{t}} ( \nabla_{u} H ) = \nabla_{u} (
     \nabla_{\partial_{t}} H ) + \langle H,h ( u,e_{i} ) \rangle \nabla_{i} H-
     \langle H, \nabla_{i} H \rangle h ( u,e_{i} ) . \]
  Therefore, we have
  \begin{eqnarray*}
    \nabla_{\partial_{t}} \nabla H ( u ) & = & \nabla_{\partial_{t}} (
    \nabla_{u} H ) - \nabla H ( \nabla_{\partial_{t}} u )\\
    & = & \nabla_{u} ( \Delta H+n c H+H^{\alpha} h^{\alpha}_{i j} h_{i j} )\\
    &  & +2 \langle H,h ( u,e_{i} ) \rangle \nabla_{i} H- \langle H,
    \nabla_{i} H \rangle h ( u,e_{i} ) .
  \end{eqnarray*}
  We use Hamilton's $\ast$ notation. For tensors $T$ and $S$, $T \ast S$ means
  any linear combination of contractions of $T$ and $S$ with the metric. Then
  the above formula can be written as
  \[ \nabla_{\partial_{t}} \nabla H= \nabla \Delta H+n c \nabla H+h \ast h
     \ast \nabla h. \]
  It follows from Ricci equation and Gauss equation that
  \[ \nabla \Delta H= \Delta \nabla H+ ( 1-n ) c  \nabla H+h \ast h \ast
     \nabla h. \]
  Now we obtain the evolution equation of $\nabla H$
  \begin{equation}
    \nabla_{\partial_{t}} \nabla H= \Delta \nabla H+c  \nabla H+h \ast h \ast
    \nabla h.
  \end{equation}
  Then
  \begin{eqnarray}
    \dt | \nabla H |^{2} & = & 2 \langle \nabla_{\partial_{t}} \nabla H,
    \nabla H \rangle \nonumber\\
    & = & 2 \langle \Delta \nabla H, \nabla H \rangle +2c | \nabla H |^{2}
    +h \ast h \ast \nabla h \ast \nabla h \\
    & = & \Delta | \nabla H |^{2} -2  | \nabla^{2} H |^{2} +2c | \nabla H
    |^{2}  +h \ast h \ast \nabla h \ast \nabla h. \nonumber
  \end{eqnarray}

  From the Cauchy-Schwarz inequality and the pinching condition, we have $| h
  \ast h \ast \nabla h \ast \nabla h | \le B_{1} | H |^{2} | \nabla h
  |^{2}$.
\end{proof}

Secondly, we need the following estimates.

\begin{lemma}
  \label{threedtlap}Along the mean curvature flow, we have
  \begin{enumerateroman}
    \item $\dt | H |^{4} \ge \Delta | H |^{4} -8 n  | H |^{2} | \nabla h
    |^{2} + \frac{4}{n} | H |^{6} +4 n c | H |^{4}$,

    \item $\dt \normho^{2} \le \Delta \normho^{2} - | \nabla h |^{2} + |
    H |^{4}$,

    \item $\dt \left( | H |^{2} \normho^{2} \right) \le \Delta \left( |
    H |^{2} \normho^{2} \right) - \frac{1}{2}   | H |^{2} | \nabla h |^{2}
    +B_{2} | \nabla h |^{2} +C_{0}   | H |^{6-2 \sigma}$, where $B_{2}$ is a
    positive constant.
  \end{enumerateroman}
\end{lemma}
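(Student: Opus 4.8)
The plan is to deduce all three inequalities from the evolution equations of Lemma~\ref{evo} combined with the pointwise algebra already set up in the paper: the gradient inequalities of Lemma~\ref{dA2}, the bounds for $R_{1}$ and $R_{2}$ in Lemma~\ref{R12}, the sharp pinching $\normho^{2}<\mathring{\alpha}<\frac{|H|^{2}+n^{2}c}{n(n-1)}$ coming from Theorem~\ref{pinch} and Lemma~\ref{app}(iii), the preserved positivity $|H|^{2}+n^{2}c>0$ (so $|H|>n\sqrt{-c}$ and $|c|<|H|^{2}/n^{2}$), and the growth estimate $\normho^{2}\le C_{0}|H|^{2(1-\sigma)}$ of Theorem~\ref{sa0h2}. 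Two moves recur: (a) trade $|\nabla H|^{2}$ for $|\nabla h|^{2}$ through Lemma~\ref{dA2}(i), which for $n\ge6$ always produces a coefficient with the right sign; and (b) whenever a power $\normho^{k}$ with $k\ge2$ appears, use Theorem~\ref{sa0h2} to replace it by $|H|^{6-2\sigma}$ up to a multiplicative constant, using $|H|>n\sqrt{-c}$ to compare $|H|^{6-4\sigma}$ with $|H|^{6-2\sigma}$.

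For (i), write $|H|^{4}=(|H|^{2})^{2}$, so that $\Delta|H|^{4}=2|H|^{2}\Delta|H|^{2}+2|\nabla|H|^{2}|^{2}$ and $\dt|H|^{4}=2|H|^{2}\dt|H|^{2}$; substituting Lemma~\ref{evo}(ii) gives $(\dt-\Delta)|H|^{4}=-2|\nabla|H|^{2}|^{2}-4|H|^{2}|\nabla H|^{2}+4|H|^{2}R_{2}+4nc|H|^{4}$. Now $-2|\nabla|H|^{2}|^{2}\ge-8|H|^{2}|\nabla H|^{2}$ by Lemma~\ref{dA2}(ii), hence $-2|\nabla|H|^{2}|^{2}-4|H|^{2}|\nabla H|^{2}\ge-12|H|^{2}|\nabla H|^{2}\ge-4(n+2)|H|^{2}|\nabla h|^{2}\ge-8n|H|^{2}|\nabla h|^{2}$ (the last step needing only $n\ge2$), while $R_{2}=|H|^{2}P_{1}+\frac{1}{n}|H|^{4}\ge\frac{1}{n}|H|^{4}$ by Lemma~\ref{R12}(ii) and $P_{1}\ge0$. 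This is precisely (i).

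For (ii), begin with Lemma~\ref{evo}(iii). The gradient part obeys $-2|\nabla h|^{2}+\frac{2}{n}|\nabla H|^{2}\le-\frac{4(n-1)}{3n}|\nabla h|^{2}\le-|\nabla h|^{2}$ because $n\ge4$. For the curvature part, Lemma~\ref{R12}(i) together with $P_{2}\le\normho^{2}$ and $-\frac{1}{n}P_{2}|H|^{2}\le0$ yields $2R_{1}-\frac{2}{n}R_{2}\le6\normho^{4}+\frac{2}{n}\normho^{2}|H|^{2}$, so $2R_{1}-\frac{2}{n}R_{2}-2nc\normho^{2}\le6\normho^{4}+\frac{2}{n}\normho^{2}|H|^{2}+2n|c|\normho^{2}$. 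Substituting $\normho^{2}<\frac{|H|^{2}}{n(n-1)}$ and $|c|<\frac{|H|^{2}}{n^{2}}$ bounds this by $\big(\frac{6}{n^{2}(n-1)^{2}}+\frac{4}{n^{2}(n-1)}\big)|H|^{4}$, which is strictly less than $|H|^{4}$ for $n\ge6$; adding the two parts gives (ii).

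For (iii), the product rule gives $(\dt-\Delta)(|H|^{2}\normho^{2})=\normho^{2}(\dt-\Delta)|H|^{2}+|H|^{2}(\dt-\Delta)\normho^{2}-2\langle\nabla|H|^{2},\nabla\normho^{2}\rangle$, and by Lemma~\ref{evo} the two $2nc|H|^{2}\normho^{2}$ terms cancel. From the gradient parts one is left with $-2\normho^{2}|\nabla H|^{2}+\frac{2}{n}|H|^{2}|\nabla H|^{2}-2|H|^{2}|\nabla h|^{2}$; discard $-2\normho^{2}|\nabla H|^{2}\le0$ and use $|\nabla H|^{2}\le\frac{n+2}{3}|\nabla h|^{2}$. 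For the cross term estimate $|\nabla|H|^{2}|\le2|H||\nabla H|$, $|\nabla\normho^{2}|\le2\normho|\nabla h|$ (the latter from $|\nabla\mathring{h}|\le|\nabla h|$), then $|\nabla H|\le\sqrt{(n+2)/3}\,|\nabla h|$ and $\normho\le\sqrt{C_{0}}\,|H|^{1-\sigma}$, obtaining $-2\langle\nabla|H|^{2},\nabla\normho^{2}\rangle\le C|H|^{2-\sigma}|\nabla h|^{2}\le\big(\tfrac{1}{4}|H|^{2}+B_{2}\big)|\nabla h|^{2}$ via the elementary Young-type bound $C|H|^{2-\sigma}\le\tfrac{1}{4}|H|^{2}+B_{2}$. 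Since $-2+\frac{2(n+2)}{3n}+\tfrac{1}{4}\le-\tfrac{1}{2}$ for $n\ge6$, the $|\nabla h|^{2}$ contributions together are at most $-\tfrac{1}{2}|H|^{2}|\nabla h|^{2}+B_{2}|\nabla h|^{2}$. Finally Lemma~\ref{R12} gives $2\normho^{2}R_{2}+2|H|^{2}(R_{1}-\frac{1}{n}R_{2})\le8|H|^{2}\normho^{4}+\frac{4}{n}|H|^{4}\normho^{2}$, and Theorem~\ref{sa0h2} with $|H|>n\sqrt{-c}$ bounds the right side by a constant multiple of $|H|^{6-2\sigma}$, which we absorb into $C_{0}|H|^{6-2\sigma}$ after enlarging $C_{0}$ if needed; this proves (iii). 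The one genuinely delicate point is the cross term of (iii): having no definite sign, it can only be controlled by spending a fixed fraction of the favourable term $-2|H|^{2}|\nabla h|^{2}$ — affordable precisely because $n\ge6$ — while Theorem~\ref{sa0h2} is exactly what keeps $\normho$ from producing anything worse than the subcritical power $|H|^{6-2\sigma}$.
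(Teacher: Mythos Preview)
Your proof is correct and follows essentially the same approach as the paper: all three parts combine Lemma~\ref{evo} with Lemma~\ref{dA2}, Lemma~\ref{R12}, the preserved pinching, and Theorem~\ref{sa0h2}, handling the cross term in (iii) via Young's inequality exactly as the paper does. The only cosmetic differences are that in (ii) the paper uses the slightly sharper bound $R_{1}-\tfrac{1}{n}R_{2}-nc\normho^{2}\le \normho^{2}(|h|^{2}-nc)$ (exploiting that the $P_{2}$ contribution is negative under pinching), and in (iii) the paper bounds the reaction term directly by $4|H|^{2}|h|^{2}\normho^{2}$; in both cases your cruder estimates still land comfortably inside the stated inequalities.
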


\begin{proof}

  (i) From the evolution equation we derive that
  \[ \dt | H |^{4} = \Delta | H |^{4} -4 | H |^{2} | \nabla H |^{2} -2 |
     \nabla | H |^{2} |^{2} +4 | H |^{2} R_{2} +4 n c  | H |^{4} . \]
  Then from Lemma \ref{dA2} and \ref{R12}, we have $R_{2} > \frac{1}{n} | H
  |^{4}$ and $4 | H |^{2} | \nabla H |^{2} +2 | \nabla | H |^{2} |^{2}
  \le 8 n  | H |^{2} | \nabla h |^{2}$.

  (ii) The evolution equation of $\normho^{2}$ is
  \[ \dt \normho^{2} = \Delta \normho^{2} -2 | \nabla h |^{2} + \frac{2}{n} |
     \nabla H |^{2} +2R_{1} - \frac{2}{n} R_{2} -2 n c \normho^{2} . \]
  Then from $\frac{2}{n} | \nabla H |^{2} \le | \nabla h |^{2}$ and
  $R_{1} - \frac{1}{n} R_{2} -n c \normho^{2} \le \normho^{2} ( | h
  |^{2} -n c ) < \frac{1}{2} | H |^{4}$ we get the conclusion.

  (iii) It follows from the evolution equations that
  \begin{eqnarray*}
    \dt \left( | H |^{2} \normho^{2} \right) & = & \Delta \left( | H |^{2}
    \normho^{2} \right) +2 | H |^{2} \left( R_{1} - \frac{1}{n} R_{2} \right)
    +2 \normho^{2} R_{2}\\
    &  & -2  | H |^{2} \left( | \nabla h |^{2} - \frac{1}{n} | \nabla H |^{2}
    \right) -2 \normho^{2} | \nabla H |^{2} -2 \left\langle \nabla | H |^{2} ,
    \nabla \normho^{2} \right\rangle .
  \end{eqnarray*}
  We use $\frac{2}{n} | \nabla H |^{2} \le | \nabla h |^{2}$ again. Then
  by Theorem \ref{sa0h2} we have
  \[ 2 | H |^{2} \left( R_{1} - \frac{1}{n} R_{2} \right) +2 \normho^{2} R_{2}
     \le 4  | H |^{2} | h |^{2} \normho^{2} <C_{0} | H |^{6-2 \sigma} .
  \]
  From the formula $\nabla_{i} \normho^{2} =2 \mathring{h}^{\alpha}_{j k}
  \nabla_{i} h^{\alpha}_{j k}$ and Young's inequality, we get
  \begin{eqnarray*}
    -2 \left\langle \nabla | H |^{2} , \nabla \normho^{2} \right\rangle &
    \le & 8  | H |   | \nabla H |   \normho   | \nabla h |\\
    & \le & 8 \sqrt{\tfrac{n+2}{3} C_{0}}   | H |^{2- \sigma} | \nabla
    h |^{2}\\
    & \le & \left( B_{2} + \frac{1}{2}   | H |^{2} \right) | \nabla h
    |^{2} .
  \end{eqnarray*}
This proves Lemma 5.3.
\end{proof}

Now we can prove Theorem \ref{dH2}.
\\ \textit{Proof of Theorem \ref{dH2}.} We define a function on $M$.
\[ f= | \nabla H |^{2} - \eta^{4}   | H |^{4} +4B_{1}   | H |^{2} \normho^{2}
   +4B_{1} B_{2}   \normho^{2} , \hspace{1em} 0< \eta < \frac{1}{n} . \]
From Lemmas \ref{dtdH2} and \ref{threedtlap}, we obtain
\begin{eqnarray*}
  \left( \dt - \Delta \right) f & \le & B_{1} | H |^{2} | \nabla h |^{2}
  - \eta^{4} \left( -8 n  | H |^{2} | \nabla h |^{2} + \frac{4}{n} | H |^{6}
  +4 n c | H |^{4} \right)\\
  &  & +4B_{1} \left( - \frac{1}{2}   | H |^{2} | \nabla h |^{2} +B_{2} |
  \nabla h |^{2} +C_{0}   | H |^{6-2 \sigma} \right) \\
    &  &+4B_{1} B_{2} ( - |
  \nabla h |^{2} + | H |^{4} )\\
  & \le & - \eta^{4}   \left( \frac{4}{n} | H |^{6} +4 n c | H |^{4}
  \right) + 4B_{1}  C_{0}   | H |^{6-2 \sigma}  +4 B_{1} B_{2} | H |^{4} .
\end{eqnarray*}
We consider the last line
of the above inequality, which is a function of $| H |$. Since the
coefficient of the highest-degree term is negative, the supremum
$C_{2} ( \eta )$ of this function is finite. Then we have
\[ \dt f< \Delta f+C_{2} ( \eta ) . \]
It's seen from the maximum principle that $f$ is bounded. This
completes the proof of Theorem \ref{dH2}.
\hfill \qedsymbol

\section{Convergence}

To show that $M_{t}$ converges to a point, we derive a lower bound for the
Ricci curvature.

\begin{lemma}
  \label{ric}If $M$ is a submanifold in $\mathbb{H}^{n+q} ( c )$
  satisfying
  $\normho^{2} < \mathring{\alpha} - \varepsilon   \omega$ and $| H |^{2}
  +n^{2} c>0$, then for all unit vector $X$ in the tangent space, the Ricci
  curvature satisfies
  \[ \tmop{Ric} ( X ) \ge \frac{n-1}{4 n} \varepsilon | H |^{2} . \]
\end{lemma}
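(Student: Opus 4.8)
The plan is to combine the Gauss equation with the trace-free splitting $h=\mathring{h}+\tfrac1n g\otimes H$ and the algebraic identities of Lemma~\ref{app}. Since $|H|^{2}+n^{2}c>0$ forces $|H|^{2}>-n^{2}c>0$, the mean curvature vector is nowhere zero; so I fix a unit tangent vector $X$ and choose a local orthonormal frame with $e_{1}=X$ and $\nu_{1}=H/|H|$. By the Gauss equation,
\[
  \tmop{Ric}(X)=(n-1)c+\langle h(X,X),H\rangle-\sum_{i}|h(X,e_{i})|^{2},
\]
and, writing $h^{\alpha}_{1i}=\mathring{h}^{\alpha}_{1i}+\tfrac1n H^{\alpha}\delta_{1i}$ and $\mu:=\mathring{h}^{1}_{11}=\langle\mathring{h}(X,X),\nu_{1}\rangle$, a short expansion gives
\[
  \tmop{Ric}(X)=(n-1)c+\frac{n-1}{n^{2}}|H|^{2}+\frac{n-2}{n}|H|\,\mu-\sum_{i,\alpha}\bigl(\mathring{h}^{\alpha}_{1i}\bigr)^{2}.
\]

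Next I would exploit the trace-free condition $\sum_{i}\mathring{h}^{1}_{ii}=0$. Cauchy--Schwarz gives $\mu^{2}\le(n-1)\sum_{i\ge2}(\mathring{h}^{1}_{ii})^{2}$, and since $\mu$, the $\{\mathring{h}^{1}_{ii}\}_{i\ge2}$, the $\{\mathring{h}^{1}_{1i}\}_{i\ge2}$ and the $\{\mathring{h}^{\alpha}_{1i}\}_{\alpha>1,\,i}$ are distinct components of $\mathring{h}$, this yields both $\sum_{i,\alpha}(\mathring{h}^{\alpha}_{1i})^{2}\le\normho^{2}-\tfrac1{n-1}\mu^{2}$ and $\mu^{2}\le\tfrac{n-1}{n}\normho^{2}$. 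Hence
\[
  \tmop{Ric}(X)\ge(n-1)c+\frac{n-1}{n^{2}}|H|^{2}-\normho^{2}+\Bigl(\tfrac1{n-1}\mu^{2}+\tfrac{n-2}{n}|H|\,\mu\Bigr).
\]
The quadratic in $\mu$ is an upward parabola with vertex at $-\tfrac{(n-2)(n-1)}{2n}|H|$; since Lemma~\ref{app}~(iii) gives $\normho<|H|/\sqrt{n(n-1)}$, we get $\sqrt{\tfrac{n-1}{n}}\,\normho<|H|/n<\tfrac{(n-2)(n-1)}{2n}|H|$ for $n\ge6$, so the vertex lies strictly to the left of the admissible interval $|\mu|\le\sqrt{\tfrac{n-1}{n}}\normho$. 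The minimum is therefore attained at $\mu=-\sqrt{\tfrac{n-1}{n}}\normho$, which gives
\[
  \tmop{Ric}(X)\ge(n-1)c+\frac{n-1}{n^{2}}|H|^{2}-\frac{n-1}{n}\normho^{2}-\frac{n-2}{n}\sqrt{\tfrac{n-1}{n}}\,|H|\,\normho=:\varphi(\normho^{2}).
\]

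Finally I would study the scalar function $\varphi(s)=(n-1)c+\tfrac{n-1}{n^{2}}|H|^{2}-\tfrac{n-1}{n}s-\tfrac{n-2}{n}\sqrt{\tfrac{n-1}{n}}\,|H|\,\sqrt{s}$ on $[0,\infty)$, which is strictly decreasing. Using the elementary identity $\tfrac{n-2}{n}\sqrt{\tfrac{n-1}{n}}=\tfrac{n-1}{n}\cdot\tfrac{n-2}{\sqrt{n(n-1)}}$ together with Lemma~\ref{app}~(ii) (with $y=|H|^{2}$) one checks that $\varphi(\mathring{\alpha})=0$. Since the pinching hypothesis gives $0\le\normho^{2}<\mathring{\alpha}-\varepsilon\omega$, monotonicity yields
\[
  \tmop{Ric}(X)\ge\varphi(\normho^{2})>\varphi(\mathring{\alpha}-\varepsilon\omega)=\int_{\mathring{\alpha}-\varepsilon\omega}^{\mathring{\alpha}}\bigl(-\varphi'(s)\bigr)\,\mathd s\ge\frac{n-1}{n}\,\varepsilon\omega,
\]
using $-\varphi'(s)\ge\tfrac{n-1}{n}$. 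To conclude, $\omega=|H|^{2}+4(n-1)c$, and $n\ge6$ implies $(3n-4)(n-4)\ge0$, i.e.\ $n^{2}\ge\tfrac{16}{3}(n-1)$; combined with $|H|^{2}>-n^{2}c$ this forces $4(n-1)c>-\tfrac34|H|^{2}$, hence $\omega>\tfrac14|H|^{2}$, and therefore $\tmop{Ric}(X)>\tfrac{n-1}{4n}\varepsilon|H|^{2}$.

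The crux of the argument is the estimate in the second paragraph: the naive bound $\sum_{i,\alpha}(\mathring{h}^{\alpha}_{1i})^{2}\le\normho^{2}$ is too lossy (it only delivers $\varphi(\mathring{\alpha})=-\mathring{\alpha}/n<0$, which is useless), so one must retain the $\tfrac1{n-1}\mu^{2}$ gain produced by the trace-free condition and then verify that the vertex of the resulting quadratic in $\mu$ falls outside the range allowed for $\mu$; this final verification is precisely where the hypotheses $n\ge6$ and $|H|^{2}+n^{2}c>0$ (through Lemma~\ref{app}~(iii)) enter.
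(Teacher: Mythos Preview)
Your argument is correct and, after the first two paragraphs, coincides with the paper's proof. The paper simply quotes the Ricci lower bound
\[
  \tmop{Ric}(X)\ge \frac{n-1}{n}\Bigl(nc+\tfrac{2}{n}|H|^{2}-|h|^{2}-\tfrac{n-2}{\sqrt{n(n-1)}}\,|H|\,\normho\Bigr)
\]
from Shiohama--Xu (Proposition~2 of \cite{MR1458750}); what you do in your second paragraph is precisely a self-contained derivation of this inequality via the trace-free splitting and the quadratic-in-$\mu$ minimisation. From that point on both proofs are the same: substitute the pinching $\normho^{2}<\mathring{\alpha}-\varepsilon\omega$, use Lemma~\ref{app}~(ii) to see that the expression vanishes when $\normho^{2}$ is replaced by $\mathring{\alpha}$, obtain $\tmop{Ric}(X)>\tfrac{n-1}{n}\varepsilon\omega$, and finish with $\omega>\tfrac14|H|^{2}$. (The paper reaches $\tfrac{n-1}{n}\varepsilon\omega$ by bounding $\normho^{2}\mapsto\mathring{\alpha}-\varepsilon\omega$ in the quadratic term and $\normho\mapsto\sqrt{\mathring{\alpha}}$ in the linear term separately, while you use the cleaner monotonicity/integral argument; both give the same bound.) One minor remark: the vertex check only needs $(n-2)(n-1)>2$, i.e.\ $n\ge4$, so the restriction $n\ge6$ is not essential at that step.
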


\begin{proof}
  Using Proposition 2 in {\cite{MR1458750}} and Lemma \ref{app} (ii), we have
  \begin{eqnarray*}
    \tmop{Ric} ( X ) & \ge & \frac{n-1}{n} \left( n c+ \frac{2}{n} | H
    |^{2} - | h |^{2} - \frac{n-2}{\sqrt{n ( n-1 )}} | H | \normho \right)\\
    & > & \frac{n-1}{n} \left( n c+ \frac{1}{n} | H |^{2} - \left(
    \mathring{\alpha} - \varepsilon   \omega \right) - \frac{n-2}{\sqrt{n (
    n-1 )}} | H | \sqrt{\mathring{\alpha}} \right)\\
    & = & \frac{n-1}{n}   \varepsilon   \omega\\
    & > & \frac{n-1}{4 n}   \varepsilon   | H |^{2} .
  \end{eqnarray*}

\end{proof}

To estimate the diameter of $M_{t}$, we need the well-known
Myers theorem.

\begin{theorem}
  [\textbf{Myers}] Let $\gamma$ be a geodesic of length at least $\pi /
  \sqrt{k}$ on $M$. If the Ricci curvature satisfies $\tmop{Ric} ( X )
  \ge ( n-1 ) k$ for all $x \in \gamma$, all unit vector $X \in T_{x}
  M$, then $\gamma$ has conjugate points.
\end{theorem}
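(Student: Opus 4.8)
The plan is to prove this by the classical second variation argument. I would parametrize $\gamma$ by arc length on $[0,L]$ with $L \ge \pi/\sqrt{k}$, and argue by contraposition: assuming $\gamma$ has \emph{no} conjugate points on $\gamma$ — in particular none in the interval $(0,L]$ — the index form $I(V,V) = \int_0^L \big( |V'|^2 - \langle R(V,\gamma')\gamma', V\rangle \big)\, \mathd t$ (the second variation of arc length, $V'$ the covariant derivative along $\gamma$) is positive definite on the space of piecewise-smooth vector fields along $\gamma$ vanishing at both endpoints. The goal is then to exhibit a nonzero such field on which $I$ is $\le 0$, which is the desired contradiction.

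For the test fields I would take a parallel orthonormal frame $e_1,\dots,e_{n-1}$ along $\gamma$ orthogonal to $\gamma'$ and set $V_i(t) = \sin(\pi t/L)\, e_i(t)$, each of which vanishes at $t=0$ and $t=L$. Since $e_i$ is parallel, $|V_i'|^2 = \tfrac{\pi^2}{L^2}\cos^2(\pi t/L)$, and summing over $i$ together with $\sum_i \langle R(e_i,\gamma')\gamma', e_i\rangle = \tmop{Ric}(\gamma')$ gives
\[
\sum_{i=1}^{n-1} I(V_i,V_i) = \int_0^L \Big( (n-1)\tfrac{\pi^2}{L^2}\cos^2(\pi t/L) - \sin^2(\pi t/L)\,\tmop{Ric}(\gamma') \Big)\, \mathd t.
\]
Now I invoke the two hypotheses: $\tmop{Ric}(\gamma') \ge (n-1)k$ along $\gamma$, and $L \ge \pi/\sqrt{k}$, hence $\pi^2/L^2 \le k$. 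These bound the integrand above by $(n-1)k\big(\cos^2(\pi t/L) - \sin^2(\pi t/L)\big) = (n-1)k\cos(2\pi t/L)$, whose integral over $[0,L]$ is zero. Therefore $\sum_i I(V_i,V_i) \le 0$, so $I(V_{i_0},V_{i_0}) \le 0$ for some index $i_0$, and $V_{i_0}\not\equiv 0$ vanishes at both endpoints — contradicting positive definiteness. Hence $\gamma$ must have a conjugate point in $(0,L]$.

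The one step demanding a little care — and the main "obstacle," such as it is — is the passage from "$I(V_{i_0},V_{i_0})\le 0$ for some nonzero endpoint-vanishing field" to "$\gamma$ has conjugate points," i.e.\ the borderline equality case of Myers. If the inequality is strict ($L > \pi/\sqrt{k}$, or $\tmop{Ric}(\gamma')>(n-1)k$ somewhere), then $I(V_{i_0},V_{i_0})<0$ and the standard criterion (nonminimality / positivity of the Morse index) yields a conjugate point strictly inside $(0,L)$. If instead $I(V_{i_0},V_{i_0})=0$, then because $I$ is positive \emph{semidefinite} on this space (no interior conjugate points by assumption), $V_{i_0}$ lies in the nullspace of $I$, hence is a Jacobi field vanishing at $0$ and $L$, so $\gamma(L)$ is conjugate to $\gamma(0)$. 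Either way the conclusion follows. Everything else used here — the second variation formula, existence of a parallel frame, and the identity $\sum_i\langle R(e_i,\gamma')\gamma',e_i\rangle=\tmop{Ric}(\gamma')$ — is standard Riemannian geometry and needs no input specific to the mean curvature flow setting.
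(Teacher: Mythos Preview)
Your proof is correct and is precisely the classical second-variation argument for Myers' theorem; you have also handled the borderline equality case cleanly. However, the paper does not prove this statement at all: it is quoted as ``the well-known Myers theorem'' and used as a black box in the subsequent diameter estimate. So there is nothing to compare against --- your write-up supplies a complete standard proof where the paper simply cites the result.
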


Let $| H |_{\min} = \min_{M_{t}} | H |$, $| H |_{\max} =
\max_{M_{t}} | H |$. To show the flow converges to a round point, we
need the following lemma.

\begin{lemma}
  As $t \rightarrow T$, we have $\tmop{diam}  M_{t} \rightarrow 0$ and $| H
  |_{\min} / | H |_{\max} \rightarrow 1$.
\end{lemma}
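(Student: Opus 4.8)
The plan is to combine the gradient estimate of Theorem~\ref{dH2} with the Ricci lower bound of Lemma~\ref{ric} and Myers' theorem. Since the maximal time $T$ is finite and $\max_{M_t}|h|^2\to\infty$ as $t\to T$, Corollary~2.9 and Theorem~\ref{sa0h2} together force $|H|_{\max}\to\infty$: indeed $|h|^2=\normho^2+\tfrac1n|H|^2\le C_0|H|^{2(1-\sigma)}+\tfrac1n|H|^2$, so $|h|^2\to\infty$ implies $|H|_{\max}\to\infty$. This is the engine that makes all error terms $C(\eta)$, $C_0|H|^{-2\sigma}$, etc., negligible relative to the leading $|H|^2$-terms.

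First I would control the oscillation of $|H|$ along any minimizing geodesic in $M_t$. Fix $t$ close to $T$, let $x_{\min},x_{\max}$ be points realizing $|H|_{\min},|H|_{\max}$ on $M_t$, and let $\gamma$ be a minimizing geodesic joining them, of length $\ell=d(x_{\min},x_{\max})\le\operatorname{diam}M_t$. Integrating along $\gamma$ and using Theorem~\ref{dH2} (in the form $|\nabla|H|\,|\le|\nabla H|\le\eta^2|H|^2+C(\eta)$, where I also invoke Lemma~\ref{dA2}(ii) to pass from $|\nabla|H|^2|$ to $|\nabla H|$; more simply $|\nabla|H|\,| \le |\nabla H|$ since $\nabla|H|^2 = 2|H|\nabla|H|$), one gets that $|H|$ cannot vary by more than a controlled amount over distance $\ell$. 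Concretely, along $\gamma$ one has $\bigl|\tfrac{d}{ds}|H|\bigr|\le \eta^2|H|^2+C(\eta)$; a standard comparison/Gronwall argument shows that if $\operatorname{diam}M_t$ is small then $|H|_{\max}/|H|_{\min}$ stays close to $1$, while conversely a bound on this ratio feeds back into a diameter bound. The two assertions of the lemma are therefore coupled and must be proved simultaneously.

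To break the coupling I would use Myers' theorem to bound the diameter a priori. By Lemma~\ref{ric}, $\operatorname{Ric}(X)\ge\tfrac{n-1}{4n}\varepsilon|H|^2\ge\tfrac{n-1}{4n}\varepsilon|H|_{\min}^2$ everywhere on $M_t$, so Myers gives $\operatorname{diam}M_t\le\pi\big/\sqrt{\tfrac{\varepsilon}{4n}|H|_{\min}^2}=\tfrac{2\pi}{|H|_{\min}}\sqrt{n/\varepsilon}$. Now on the geodesic $\gamma$ of length $\ell\le\operatorname{diam}M_t$, pick $\eta$ small and integrate: since every point of $\gamma$ has $|H|\le|H|_{\max}$, we get $|H|_{\max}-|H|_{\min}\le\ell\,(\eta^2|H|_{\max}^2+C(\eta))$. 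Dividing by $|H|_{\max}$ and using $\ell\le\tfrac{2\pi}{|H|_{\min}}\sqrt{n/\varepsilon}$ yields
\[
1-\frac{|H|_{\min}}{|H|_{\max}}\le \frac{2\pi}{|H|_{\min}}\sqrt{\tfrac n\varepsilon}\left(\eta^2|H|_{\max}+\frac{C(\eta)}{|H|_{\max}}\right).
\]
Write $\rho=|H|_{\min}/|H|_{\max}\in(0,1]$; the right side is $\tfrac{2\pi}{\rho}\sqrt{n/\varepsilon}\,(\eta^2|H|_{\max}+C(\eta)/|H|_{\max})$, giving $1-\rho\le \tfrac{2\pi}{\rho}\sqrt{n/\varepsilon}(\eta^2|H|_{\max}+C(\eta)/|H|_{\max})$, i.e. $\rho(1-\rho)\le 2\pi\sqrt{n/\varepsilon}(\eta^2|H|_{\max}+C(\eta)/|H|_{\max})$. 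As $t\to T$ we first send $\eta\to0$: for fixed small $\eta$, $C(\eta)/|H|_{\max}\to0$, so $\limsup_{t\to T}\rho(1-\rho)\le 2\pi\sqrt{n/\varepsilon}\,\eta^2\limsup_{t\to T}|H|_{\max}$. This last step is the delicate one, since $|H|_{\max}\to\infty$; the fix is to choose $\eta=\eta(t)\to0$ slowly enough that $\eta(t)^2|H|_{\max}(t)\to0$ — permissible because $C(\eta)$ is a fixed function and $|H|_{\max}$ grows at a definite rate (being essentially $\sim (T-t)^{-1/2}$ from the blow-up rate, though one only needs $\eta^2|H|_{\max}\to0$ for a suitable choice). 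Then $\rho(1-\rho)\to0$, and since $\rho$ is bounded away from $0$ (if $\rho\to0$ along a subsequence then $|H|_{\min}$ stays bounded while $|H|_{\max}\to\infty$, contradicting the oscillation bound which forces $|H|_{\max}-|H|_{\min}$ to be $o(|H|_{\max})$ once $\operatorname{diam}M_t\to0$), we conclude $\rho\to1$, i.e. $|H|_{\min}/|H|_{\max}\to1$. Finally, $\operatorname{diam}M_t\le\tfrac{2\pi}{|H|_{\min}}\sqrt{n/\varepsilon}$ and $|H|_{\min}=\rho|H|_{\max}\to\infty$ give $\operatorname{diam}M_t\to0$.

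The main obstacle is precisely the circularity between the two conclusions together with the unboundedness of $|H|_{\max}$: one cannot simply absorb $\eta^2|H|_{\max}^2$ into lower-order terms because $|H|_{\max}^2\to\infty$. The resolution above — bootstrapping the diameter via Myers from the Ricci bound (which only needs $|H|_{\min}$), then letting the pinching parameter $\eta$ depend on $t$ and tend to $0$ at a rate calibrated against the known blow-up rate of $|H|_{\max}$ — is where the real work lies. An alternative, and perhaps cleaner, route is to rescale: set $\tilde g_t=|H|_{\max}^2(t)\,g_t$, under which diameters and curvatures are normalized, prove uniform estimates on the rescaled flow, and extract the round-point limit from compactness; but within the framework of this paper the direct Myers-plus-gradient-estimate argument sketched here suffices.
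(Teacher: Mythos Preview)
Your overall strategy---gradient estimate plus Ricci lower bound plus Myers---is right, but the execution has two real problems, and the paper's proof avoids both by organizing the argument differently.

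First, an arithmetic slip. After dividing $|H|_{\max}-|H|_{\min}\le\ell(\eta^2|H|_{\max}^2+C(\eta))$ by $|H|_{\max}$ and substituting $\ell\le\tfrac{2\pi}{|H|_{\min}}\sqrt{n/\varepsilon}=\tfrac{2\pi}{\rho|H|_{\max}}\sqrt{n/\varepsilon}$, the factor $|H|_{\max}$ in the denominator cancels one power of $|H|_{\max}$ in the bracket. The correct inequality is
\[
\rho(1-\rho)\le 2\pi\sqrt{n/\varepsilon}\Bigl(\eta^2+\frac{C(\eta)}{|H|_{\max}^2}\Bigr),
\]
with $\eta^2$, not $\eta^2|H|_{\max}$. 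So the ``delicate step'' you worry about, and the device of letting $\eta=\eta(t)$ track the blow-up rate, are artifacts of this slip; with the correct bound, fixing $\eta$ small and sending $t\to T$ already makes the right side as small as you like.

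Second, and more serious, even with the correct inequality the conclusion $\rho(1-\rho)\to 0$ does not by itself give $\rho\to 1$: it allows $\rho\to 0$ as well. Your argument to exclude this (``contradicting the oscillation bound which forces $|H|_{\max}-|H|_{\min}$ to be $o(|H|_{\max})$ once $\operatorname{diam}M_t\to 0$'') is circular, since your only diameter bound is $\operatorname{diam}M_t\le\tfrac{2\pi}{|H|_{\min}}\sqrt{n/\varepsilon}$, which goes to zero only if $|H|_{\min}\to\infty$, i.e.\ only if $\rho$ stays away from zero.

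The paper breaks this circularity by reversing the order: instead of applying Myers globally with the unknown $|H|_{\min}$, it works \emph{outward from the maximum point}. Fix $\eta$ small; for $t$ large enough the global gradient bound reads $|\nabla H|<2\eta^2|H|_{\max}^2$. Let $x$ realize $|H|_{\max}$ and consider any geodesic from $x$ of length $l=(2\eta|H|_{\max})^{-1}$. Integrating the gradient bound along it gives $|H|>(1-\eta)|H|_{\max}$ at every point of that geodesic, hence $\operatorname{Ric}>\tfrac{n-1}{4n}\varepsilon(1-\eta)^2|H|_{\max}^2$ there. For $\eta$ small this exceeds $(n-1)\pi^2/l^2$, so Myers forces a conjugate point before arclength $l$; hence no minimizing geodesic from $x$ has length $\ge l$, i.e.\ $\operatorname{diam}M_t\le l$ and simultaneously $|H|_{\min}>(1-\eta)|H|_{\max}$. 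Both conclusions follow at once, with no appeal to $|H|_{\min}$ in the Ricci bound and no circularity.
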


\begin{proof}
  Theorem \ref{dH2} asserts $| \nabla H | < \eta^{2} | H |^{2} +C ( \eta )$
  for $0< \eta < \frac{1}{n}$. Since $| H |_{\max} \rightarrow \infty$ as $t
  \rightarrow T$, there exists a time $\tau ( \eta )$, such that for $t> \tau
  ( \eta )$, $| H |_{\max}^{2} >C ( \eta ) / \eta^{2}$. Then we have $| \nabla
  H | <2  \eta^{2} | H |^{2}_{\max}$.

  At a time $t> \tau ( \eta )$, let $x$ be a point on $M_{t}$ where $| H |$
  achieves its maximum. Then along all geodesics of length $l= ( 2  \eta   | H
  |_{\max} )^{-1}$ starting from $x$, we have $| H | > | H |_{\max} - | \nabla
  H | \cdot l> ( 1- \eta ) | H |_{\max}$. With $\eta$ small enough, Lemma
  \ref{ric} implies $\tmop{Ric} > \frac{n-1}{4 n} \varepsilon ( 1- \eta )^{2}
  | H |^{2}_{\max} > ( n-1 ) \pi^{2} /l^{2}$ on these geodesics. Then from
  Myers' theorem, these geodesics can reach any point of $M_{t}$.

  Thus we have $| H |_{\min} > ( 1- \eta ) | H |_{\max}$ and $\tmop{diam}
  M_{t} \le ( 2  \eta   | H |_{\max} )^{-1}$ for $t \in ( \tau ( \eta )
  ,T )$. This proves the lemma.
\end{proof}

Now we are in a position to complete the proof of Main Theorem.\\
\textit{Proof of Main Theorem.} To prove the flow converges to a round point, we magnify the metric of the
ambient space such that the submanifold maintains its volume along
the flow. Using the same argument as in {\cite{liu2012mean}}, we can
prove that the rescaled mean curvature flow converges to a totally
umbilical sphere as the reparameterized time tends to infinity. This
completes the proof of Main Theorem.
\hfill \qedsymbol

\section{Final Remarks}

Since $\mathring{\alpha} \rightarrow 0$ as $| H |^{2} \rightarrow
-n^{2} c$, we see that if a connected submanifold satisfies $\sup
\left( \normho^{2} - \mathring{\alpha} \right) <0$, then either $| H
|^{2}
>-n^{2} c$ or $| H |^{2} <-n^{2} c$. Therefore, the condition in Main Theorem can be relaxed.

\begin{theorem}
  Let $F_{0} :M^{n} \rightarrow \mathbb{H}^{n+q} ( c )$ be an n-dimensional
  ($n \ge 6$) complete submanifold immersed in the hyperbolic space with
  constant curvature $c$. Suppose
  \[ \sup_{F_{0}} ( | h |^{2} - \alpha ( n, | H | ,c ) ) <0 \]
  and there exists a point $x$ on $F_{0}$ such that $| H ( x ) |^{2} >-n^{2}
  c$. Then the mean curvature flow with initial value $F_{0}$ has a unique
  smooth solution $F: M \times [ 0,T ) \rightarrow \mathbb{H}^{n+q} ( c )$ on
  a finite maximal time interval, and $F_{t}$ converges to a round point as $t
  \rightarrow T$. In particular, $M$ is diffeomorphic to the standard
  $n$-sphere $\mathbb{S}^{n}$.
\end{theorem}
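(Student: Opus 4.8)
The plan is to derive Theorem 7.1 from the Main Theorem by showing that, under the stated hypotheses, the pinching condition already forces $|H|^2 + n^2 c > 0$ at \emph{every} point of $F_0$; once this is in hand, Theorem 7.1 follows at once from the Main Theorem, with no new analytic input.

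First I would set $U = \{\, p \in M : |H(p)|^2 + n^2 c > 0 \,\}$. By continuity of $|H|$ this set is open, and it is nonempty since it contains the point $x$ furnished by hypothesis. The main point is that $U$ is also closed in $M$. Let $p \in \overline{U}$; then $|H(p)|^2 + n^2 c \ge 0$, and I claim equality is impossible. At the borderline value $|H|^2 = -n^2 c$ a direct computation gives $|H|^4 + 4(n-1)c|H|^2 = n^2 c^2 (n-2)^2 \ge 0$, so $\alpha(n,|H|,c)$ is a well-defined real number there, namely $\alpha = -nc$; since in this case $|h(p)|^2 = \normho^2 + \tfrac{1}{n}|H(p)|^2 = \normho^2 - nc$, we would obtain
\[ |h(p)|^2 - \alpha(n,|H(p)|,c) = \normho^2 \ge 0 , \]
contradicting $\sup_{F_0}\bigl(|h|^2 - \alpha(n,|H|,c)\bigr) < 0$. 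Hence $|H(p)|^2 + n^2 c > 0$, i.e.\ $p \in U$, so $U$ is both open and closed; since $M$ is connected and $U \neq \emptyset$, we conclude $U = M$. (Equivalently, one may argue along a sequence $p_k \to p$ with $p_k \in U$, using $\normho^2 < \mathring{\alpha}(|H(p_k)|^2)$ together with $\mathring{\alpha}(|H(p_k)|^2) \to 0$ as $|H(p_k)|^2 \to -n^2 c$; this is exactly the dichotomy already recorded at the beginning of this section.)

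With $|H|^2 + n^2 c > 0$ throughout $F_0$, all hypotheses of the Main Theorem are met, and its conclusions — a unique smooth solution on a finite maximal interval, convergence of $F_t$ to a round point, and $M$ diffeomorphic to $\mathbb{S}^n$ — follow immediately. The only step that needs any care is the verification that $\alpha(n,|H|,c)$ stays real (and equals $-nc$) at $|H|^2 = -n^2 c$, so that the pinching inequality remains meaningful and may legitimately be invoked at boundary points; this is precisely the elementary computation above, and I do not anticipate any substantial obstacle beyond it.
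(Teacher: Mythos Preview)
Your proposal is correct and follows essentially the same route as the paper: the paper's proof of this theorem is the one-line observation at the start of Section~7 that $\mathring{\alpha}\to 0$ as $|H|^2\to -n^2c$, so the pinching $\normho^2<\mathring{\alpha}$ forbids $|H|^2=-n^2c$ and connectedness forces $|H|^2+n^2c>0$ everywhere, after which the Main Theorem applies. Your explicit computation that $\alpha(n,|H|,c)=-nc$ at the borderline (hence $|h|^2-\alpha=\normho^2\ge 0$) is exactly a fleshed-out version of this remark.
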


For the compact submanifolds, we have the following convergence
theorem under the weakly pinching condition.

\begin{theorem}
  Let $F_{0} :M^{n} \rightarrow \mathbb{H}^{n+q} ( c )$ be an n-dimensional
  ($n \ge 6$) closed submanifold immersed in the hyperbolic space with
  constant curvature $c$. If $F_{0}$ satisfies
  \[ | h |^{2} \le \alpha ( n, | H | ,c ) \hspace{1em} \tmop{and}
     \hspace{1em} | H |^{2} +n^{2} c>0, \]
  then the mean curvature flow with initial value $F_{0}$ has a unique smooth
  solution $F: M \times [ 0,T ) \rightarrow \mathbb{H}^{n+q} ( c )$ on a
  finite maximal time interval, and $F_{t}$ converges to a round point as $t
  \rightarrow T$. In particular, $M$ is diffeomorphic to the standard
  $n$-sphere $\mathbb{S}^{n}$.
\end{theorem}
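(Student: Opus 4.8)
\textit{Proof proposal.} The strategy is to reduce Theorem 7.3 to the Main Theorem by showing that the weak pinching $\normho^{2}\le\mathring{\alpha}$ (equivalent to $|h|^{2}\le\alpha(n,|H|,c)$) becomes strict after an arbitrarily short time, unless $M_{0}$ is a rather rigid submanifold. Short-time existence and uniqueness of the flow are standard, and $T<\infty$ by the Proposition of Section~3 (which uses only that $M_{0}$ is closed). Note first that $|H|^{2}+n^{2}c>0$ forces $|H|^{2}>-n^{2}c>4(n-1)|c|$ for $n\ge 6$ (as $(n-2)^{2}>0$), so $\omega=|H|^{2}+4(n-1)c>0$ on $M_{0}$. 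Hence, if $\normho^{2}<\mathring{\alpha}$ at \emph{every} point of the compact $M_{0}$, then $\normho^{2}\le\mathring{\alpha}-\varepsilon\omega$ for some $\varepsilon>0$ and the Main Theorem applies verbatim. So I may assume $\normho^{2}(x_{0})=\mathring{\alpha}(|H(x_{0})|^{2})$ for some $x_{0}\in M_{0}$.

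Set $V=\normho^{2}-\mathring{\alpha}$ and let $t^{*}\in(0,T]$ be the supremum of the $s$ with $|H|^{2}+n^{2}c>0$ on $M\times[0,s]$; by continuity and compactness $t^{*}>0$, and on $[0,t^{*})$ the quantity $\mathring{\alpha}=\mathring{\alpha}(|H|^{2})$ is well defined. Repeating the computation of Theorem~\ref{pinch} with $\varepsilon=0$, but now \emph{keeping} the negative gradient contribution instead of discarding it, one obtains an evolution inequality
\[
\left(\dt-\Delta\right)V\ \le\ 2V\Big[\,2\mathring{\alpha}+\tfrac{1}{n}|H|^{2}-nc-\mathring{\alpha}'|H|^{2}+2P_{2}\,\Big]+2V^{2}-\kappa\,|\nabla H|^{2},
\]
where the $P_{2}$-dependent correction and the algebraic $\mathring{\alpha}$-terms are handled exactly as in the proof of Theorem~\ref{pinch} (using Lemma~\ref{app}(i),(iii)), and $\kappa=2\big(\tfrac{2(n-1)}{n(n+2)}-\mathring{\alpha}'-2|H|^{2}\mathring{\alpha}''\big)>0$ by Lemma~\ref{app}(vi). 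By the scalar maximum principle $V\le 0$ is preserved on $[0,t^{*})$; since $V$ is then bounded on compact subintervals, $2V^{2}$ is absorbed into the linear term and $V$ satisfies $\left(\dt-\Delta\right)V\le bV-\kappa|\nabla H|^{2}$ with $b$ locally bounded. The strong maximum principle then yields the dichotomy: either $V<0$ everywhere on $M\times(0,t^{*})$, or $V\equiv 0$ on $M\times[0,t']$ for some $t'\in(0,t^{*})$.

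In the first case, choose any $t_{1}\in(0,t^{*})$. Then $M_{t_{1}}$ is a closed (hence complete, connected) submanifold with $|H|^{2}+n^{2}c>0$, and by compactness $\normho^{2}\le\mathring{\alpha}-\varepsilon_{1}\omega$ for some $\varepsilon_{1}>0$. Applying the Main Theorem with $M_{t_{1}}$ as initial datum, the flow (which for $t\ge t_{1}$ coincides with this one) converges to a round point as $t\to T$ and $M\cong M_{t_{1}}$ is diffeomorphic to $\mathbb{S}^{n}$; together with the smoothness on $[0,t_{1}]$ this gives the theorem. In the second case, $V\equiv 0$ forces every nonpositive term dropped above to vanish identically; since $\kappa>0$ this gives $\nabla H\equiv 0$ on $M\times[0,t']$, so each $M_{t}$ has parallel mean curvature, $|H|$ is constant in space, and $|h|^{2}\equiv\alpha(n,|H|,c)$. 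The main obstacle is to exclude this, which requires the equality case of the rigidity Theorem~1.1: a complete submanifold of $\mathbb{H}^{n+q}(c)$ with parallel mean curvature, $|H|^{2}+n^{2}c>0$ and $|h|^{2}\equiv\alpha(n,|H|,c)$ cannot be totally umbilical (an umbilical sphere has $\normho^{2}=0<\mathring{\alpha}$), so via the reduction to minimal submanifolds of a geodesic sphere together with the Simons--Chern--do Carmo--Kobayashi and Li--Li classifications one expects it to be one of the product submanifolds $\mathbb{F}^{n-1}(c+\lambda^{2})\times\mathbb{F}^{1}(c+\mu^{2})$ of the Example, which is non-compact precisely when $|H|^{2}+n^{2}c>0$ (there $\lambda>\sqrt{-c}$ forces $c+\mu^{2}<0$). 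This contradicts the compactness of $M_{0}$, so the second case is vacuous, completing the proof.
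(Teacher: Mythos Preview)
Your global strategy is the same as the paper's: preserve $V=\normho^{2}-\mathring{\alpha}\le 0$ on a short time interval, invoke the strong maximum principle to obtain either $V<0$ at some $t_{1}>0$ (and then feed $M_{t_{1}}$ into the Main Theorem) or $V\equiv 0$ on a slab, and eliminate the second alternative by rigidity. The preservation step and the first branch of the dichotomy are fine.

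The gap is in the equality branch. By ``handling the $P_{2}$-correction as in Theorem~\ref{pinch}'' you throw it away, and by using Lemma~\ref{dA2}(i) to pass from $|\nabla h|^{2}$ to $|\nabla H|^{2}$ you keep only $-\kappa|\nabla H|^{2}$. Consequently $V\equiv 0$ yields only $\nabla H\equiv 0$, i.e.\ parallel mean curvature. From there you appeal to the \emph{equality} case of Theorem~1.1, which the paper neither states nor proves (Theorem~1.1 is the strict-inequality rigidity), and you say so yourself (``one expects it to be one of the product submanifolds\dots''). That is an assertion, not an argument; classifying compact parallel-mean-curvature submanifolds of $\mathbb{H}^{n+q}(c)$ with $|h|^{2}\equiv\alpha(n,|H|,c)$ in arbitrary codimension is exactly the nontrivial step you are trying to sidestep.

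The paper avoids this by retaining \emph{both} negative reaction terms. Bounding $|\nabla H|^{2}\le\tfrac{n+2}{3}|\nabla h|^{2}$ in the other direction and keeping the $P_{2}$-term, one gets on $[0,t_{0}]$
\[
\left(\dt-\Delta\right)V\ \le\ \tfrac{2(n+2)}{3}\Bigl[-\tfrac{2(n-1)}{n(n+2)}+\mathring{\alpha}'+2|H|^{2}\mathring{\alpha}''\Bigr]|\nabla h|^{2}
+2P_{2}\Bigl[2\mathring{\alpha}-\tfrac{1}{n}|H|^{2}+|H|^{2}\mathring{\alpha}'\Bigr]
+2V\bigl[\cdots\bigr]+2V^{2},
\]
where both bracketed coefficients are strictly negative by Lemma~\ref{app}(iii),(vi). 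Now $V\equiv 0$ forces $|\nabla h|^{2}\equiv 0$ \emph{and} $P_{2}\equiv 0$. The second identity means the image of $h$ lies in the line spanned by $H$, so the first normal space is one-dimensional and (since $\nabla h=0$) parallel; Erbacher's theorem then places $M_{0}$ inside a totally geodesic $\mathbb{H}^{n+1}(c)$. There $M_{0}$ is a compact hypersurface with parallel second fundamental form, hence by Lawson's classification a totally umbilical sphere, contradicting $\normho^{2}\equiv\mathring{\alpha}>0$. No external equality-rigidity result is needed. To repair your proof, simply keep these two terms in the evolution inequality rather than discarding them.
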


\begin{proof}
  By the continuity, there exists $t_{0} >0$ such that $| H |^{2} +n^{2} c>0$
  remains true for $t \in [ 0,t_{0} ]$. Similar to the proof of Theorem
  \ref{pinch}, we obtain the following estimate on the time interval $[
  0,t_{0} ]$.
  \begin{eqnarray}
    &  & \left( \dt - \Delta \right) \left( \normho^{2} - \mathring{\alpha}
    \right) \nonumber\\
    & \le & \frac{2 ( n+2 )}{3} \left[ - \frac{2 ( n-1 )}{n ( n+2 )} +
    \mathring{\alpha}' +2 | H |^{2} \mathring{\alpha}''   \right] | \nabla h
    |^{2} \nonumber\\
    &  & +2P_{2} \left[ 2 \mathring{\alpha} - \frac{1}{n} | H |^{2} + | H
    |^{2} \mathring{\alpha}' \right]  \label{Phosao}\\
    &  & +2 \left( \normho^{2} - \mathring{\alpha} \right) \left( 2
    \mathring{\alpha} + \frac{1}{n} | H |^{2} -n c - \mathring{\alpha}' | H
    |^{2} +2P_{2} \right) +2 \left( \normho^{2} - \mathring{\alpha}
    \right)^{2} . \nonumber
  \end{eqnarray}
  The expressions in the two square brackets of (\ref{Phosao}) are negative.
  Thus the weak maximum principle implies $\normho^{2} \le
  \mathring{\alpha}$ is preserved for $t \in [ 0,t_{0} ]$.

  Then we use the strong maximum principle. If $\sup_{M_{t_{0}}} \left(
  \normho^{2} - \mathring{\alpha} \right) =0$, then $\normho^{2} \equiv
  \mathring{\alpha}$ holds for all $t \in [ 0,t_{0} ]$. So, we have $| \nabla
  h |^{2} \equiv P_{2} \equiv 0$ for $t \in [ 0,t_{0} ]$. By a theorem due to Erbacher
  {\cite{MR0288701}}, $M_{0}$ lies in an $( n+1 )$-dimensional totally
  geodesic submanifold of $\mathbb{H}^{n+q} ( c )$. It follows from Theorem 4 of
  {\cite{MR0238229}} and compactness of $M_{0}$ that $M_{0}$ is a totally
  umbilical sphere. This contradicts \ $\normho^{2} \equiv \mathring{\alpha}
  >0$.

  Therefore, we have $\sup_{M_{t_{0}}} \left( \normho^{2} - \mathring{\alpha}
  \right) <0$. The assertion follows from Main Theorem.
\end{proof}

Finally we discuss the convergence and sphere theorems in low
dimensions. In the case where $n\leq5$, $\mathring{\alpha}$ does not
possesses all the properties in Lemma \ref{app}, which prevents us
from getting the same convergence result as Main Theorem.

When $n=5$, we obtain a convergence result for the mean curvature
flow, which improves the corresponding result in the convergence
theorem due to Liu-Xu-Ye-Zhao {\cite{MR3078951}}.

Set
\[ \beta ( x ) = \tfrac{5}{11} c+ \tfrac{15}{88} x+ \tfrac{\sqrt{7}}{88}
   \sqrt{7x^{2} +272c x+4000c^{2}} . \]
Then we obtain the following convergence result.

\begin{proposition}
  \label{fived}Let $F_{0} :M^{5} \rightarrow \mathbb{H}^{5+q} ( c )$ be a
  complete submanifold immersed in the hyperbolic space. Suppose $F_{0}$
  satisfies $\sup_{F_{0}} ( | h |^{2} - \beta ( | H |^{2} ) ) <0$ and $| H
  |^{2} >-25c$. Then the mean curvature flow with initial value $F_{0}$ has a
  unique smooth solution on a finite maximal time interval, and the solution
  $F_{t}$ converges to a round point.
\end{proposition}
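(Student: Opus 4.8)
The plan is to reproduce, at $n=5$, the four-step proof of the Main Theorem with the traceless pinching function $\mathring{\beta}(y):=\beta(y)-\tfrac15 y$ in place of $\mathring{\alpha}$, and with $\omega=|H|^{2}+16c$. The one genuinely new ingredient is an analogue of Lemma \ref{app}: one must check by direct computation that, on the interval $(-25c,+\infty)$, $\mathring{\beta}$ satisfies $\mathring{\beta}(y)\to0$ as $y\to-25c$, $0<\mathring{\beta}(y)<\tfrac{y+25c}{20}$, $\mathring{\beta}'(y)>0$, $\mathring{\beta}''(y)>0$, $2\sqrt{y}\,\mathring{\beta}'(y)<\sqrt{\mathring{\beta}(y)}$, $y\mathring{\beta}'(y)>\mathring{\beta}(y)$, the Kato-type bound $2y\mathring{\beta}''(y)+\mathring{\beta}'(y)+\tfrac15<\tfrac37$, the differential \emph{inequality} $\mathring{\beta}\bigl(\mathring{\beta}+\tfrac15 y-5c\bigr)\le y\mathring{\beta}'\bigl(\mathring{\beta}+\tfrac15 y+5c\bigr)$ (which replaces the \emph{identity} of Lemma \ref{app}(i)), and $\tfrac{3}{\sqrt{20}}\sqrt{y\,\mathring{\beta}(y)}\le\tfrac15 y-\mathring{\beta}(y)+5c$ (which replaces the identity of Lemma \ref{app}(ii)). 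A convenient device is to introduce $s=\sqrt{7y^{2}+272cy+4000c^{2}}$ (this quantity never vanishes, the radicand having negative discriminant) and to reduce each claim, via $s^{2}=7y^{2}+272cy+4000c^{2}$, to a polynomial inequality in $y$ and $s$ to be verified on $y>-25c$.

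Granted these properties, the rest is a routine transcription. First, arguing as in Theorem \ref{pinch}, one shows $\normho^{2}<\mathring{\beta}-\varepsilon\omega$ is preserved: since $\sup_{F_{0}}(|h|^{2}-\beta(|H|^{2}))=\sup_{F_{0}}(\normho^{2}-\mathring{\beta})<0$ and $|H|^{2}>-25c$, the analogue of Lemma \ref{ric} (see below) gives a uniform positive lower bound on $\tmop{Ric}$, so $M_{0}$ is compact and the pinching holds with a buffer $\varepsilon\omega$ for small $\varepsilon>0$; then, writing $U=\normho^{2}-\mathring{\beta}+\varepsilon\omega$ and substituting $\normho^{2}=U+\mathring{\beta}-\varepsilon\omega$ in the evolution equation for $U$, the first-order terms are $\le0$ by Lemma \ref{dA2} and the Kato-type bound, the $\mathring{\beta}$-bracket is now $\le0$ (rather than $\equiv0$) by the differential inequality, the $P_{2}$-bracket and the $\varepsilon\omega$-bracket are negative as before, so $(\dt-\Delta)U<2U[\cdots]+2U^{2}$ and the maximum principle finishes it; the analogue of the corollary after Theorem \ref{pinch} gives $|H|^{2}+25c>\delta$. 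Section 4 then goes through with $f_{\sigma}=\normho^{2}/\mathring{\beta}^{1-\sigma}$: the evolution inequality of Lemma \ref{ptf}, the Laplacian lower bound $\Delta\normho^{2}\ge2\langle\mathring{h},\nabla^{2}H\rangle+\tfrac{\varepsilon}{2}|H|^{2}\normho^{2}$ of Lemma \ref{lapa0} (whose proof uses only Proposition \ref{lao3}, the Li--Li inequality (\ref{sP232}) and the pinching, all dimension-free), and the De Giorgi iteration together reproduce $\normho^{2}\le C_{0}|H|^{2(1-\sigma)}$. Sections 5 and 6 are already dimension-independent given this estimate: Lemmas \ref{dtdH2} and \ref{threedtlap} and the auxiliary function $f=|\nabla H|^{2}-\eta^{4}|H|^{4}+4B_{1}|H|^{2}\normho^{2}+4B_{1}B_{2}\normho^{2}$ give $|\nabla H|<\eta^{2}|H|^{2}+C(\eta)$; Proposition 2 of \cite{MR1458750} together with the $\mathring{\beta}$-inequality replacing Lemma \ref{app}(ii) gives $\tmop{Ric}(X)\ge\tfrac45\varepsilon(|H|^{2}+16c)>\tfrac15\varepsilon|H|^{2}$ (using $|H|^{2}>-25c$); then Myers' theorem, the diameter estimate and $|H|_{\min}/|H|_{\max}\to1$ follow as in Section 6, and the rescaling argument of \cite{liu2012mean} yields convergence to a round point.

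\textbf{The main obstacle} is precisely the verification of the properties of $\mathring{\beta}$. Unlike $\mathring{\alpha}$, which is a root of a quadratic that a single substitution $\xi=y/\sqrt{y^{2}+16cy}$ linearizes, $\mathring{\beta}$ is \emph{not} the root of such a quadratic in any convenient variable; it has instead been engineered so that the Kato-type constraint $2y\mathring{\beta}''+\mathring{\beta}'+\tfrac15<\tfrac37$ holds at $n=5$ --- a bound that \emph{fails} for $\mathring{\alpha}$ there, since $\tfrac{n}{(n-2)^{2}}=\tfrac59>\tfrac37$ when $n=5$ --- while simultaneously the zeroth-order preservation bracket and the Ricci estimate remain nonpositive. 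Balancing these competing demands is exactly what forces the coefficients $\tfrac{5}{11},\tfrac{15}{88},\tfrac{\sqrt7}{88}$ and the quadratic $7x^{2}+272cx+4000c^{2}$ under the radical in the definition of $\beta$, and checking that all the inequalities above hold with these choices --- and on the correct interval, since $\mathring{\beta}$ is not monotone on all of $\mathbb{R}$ --- is the delicate and computation-heavy core of the argument. As a sanity check one verifies that $\beta\equiv\alpha(5,|H|,c)$ when $c=0$, that $\beta(y)\ge\tfrac14 y+2c$ for all $y$ (with equality only when $c=0$), and that $\beta<\alpha(5,|H|,c)$ for $c<0$ and $y>-25c$, so that $\beta$ lies strictly between the Liu--Xu--Ye--Zhao pinching of Theorem 1.3 and the (here unavailable) function $\alpha$.
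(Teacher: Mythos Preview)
Your approach is essentially the paper's own: list the properties of $\mathring{\beta}$ that replace Lemma~\ref{app}, then rerun Sections~3--6 verbatim. The paper is terser --- it records only four properties (your bounds (i), the differential inequality (ii), the bound (iii) $2\sqrt{x}\,\mathring{\beta}'<\sqrt{\mathring{\beta}}$, and (iv) $2x\mathring{\beta}''+\mathring{\beta}'<\tfrac{8}{35}$) and declares the rest ``the same as the previous parts of this paper'' --- whereas you spell out the full list and the transcription. Note that several of your extra items (e.g.\ the analogue of Lemma~\ref{app}(ii)) follow at once from the paper's sharper (i), namely $\mathring{\beta}<\mathring{\alpha}$, together with the identity for $\mathring{\alpha}$.

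There is one place where your claim that the transcription is ``dimension-free'' is too quick. In the proof of Lemma~\ref{lapa0} the term
\[
\Bigl(\tfrac{n-2}{\sqrt{n(n-1)}}-\tfrac{\sqrt{2}}{2}\Bigr)\,|H|\sqrt{P_{1}}\,Q_{2}
\]
is dropped as nonnegative; this uses $\tfrac{n-2}{\sqrt{n(n-1)}}\ge\tfrac{1}{\sqrt{2}}$, which holds only for $n\ge6$. At $n=5$ one has $\tfrac{3}{\sqrt{20}}\approx0.671<\tfrac{1}{\sqrt{2}}$, so the term is (slightly) negative. The fix is easy: the preceding bracket $\tfrac12\bigl(\tfrac{n-2}{\sqrt{n(n-1)}}|H|\normho P_{2}-P_{2}^{2}-P_{1}Q_{2}\bigr)$ is bounded below, via $P_{2}^{2}+P_{1}Q_{2}\le\normho^{2}P_{2}\le\tfrac{1}{\sqrt{n(n-1)}}|H|\normho P_{2}$, by $\tfrac{n-3}{2\sqrt{n(n-1)}}|H|\normho P_{2}$, and since $\sqrt{P_{1}}Q_{2}\le\normho P_{2}$ and $\tfrac{n-3}{2}>\tfrac{\sqrt{2}}{2}-\tfrac{n-2}{\sqrt{n(n-1)}}$ for $n=5$, the slack absorbs the deficit. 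You should flag this adjustment explicitly rather than asserting the proof of Lemma~\ref{lapa0} carries over unchanged.
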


Let $\mathring{\beta} ( x ) = \beta ( x ) - \frac{x}{5}$. By direct
computations, for $x>-25c$, we have:
\begin{enumerateroman}
  \item $\max \left\{ \frac{x}{20} +2c,0 \right\} < \mathring{\beta} ( x ) <
  \mathring{\alpha} ( x )$,

  \item $\mathring{\beta} ( x ) \cdot \left( \mathring{\beta} ( x ) +
  \frac{x}{5} -5 c \right) < | H |^{2}   \mathring{\beta}' ( x ) \cdot \left(
  \mathring{\beta} ( x ) + \frac{x}{5} +5 c \right)$,

  \item $2 \sqrt{x}   \mathring{\beta}' ( x ) < \sqrt{\mathring{\beta} ( x
  )}$,

  \item $2x  \mathring{\beta}'' ( x ) + \mathring{\beta}' ( x ) <
  \frac{8}{35}$.
\end{enumerateroman}
With these properties, we can prove Proposition \ref{fived} as the same as the
previous parts of this paper. Notice that $\frac{1}{4} | H |^{2} +2c< \beta (
| H |^{2} )$. Hence Proposition \ref{fived} improves Theorem A for $n=5$.

When $n=4$, let $M$ be a 4-dimensional oriented, simply connected
and complete submanifold immersed in $\mathbb{H}^{4+q} ( c )$.
Suppose that $M$ satisfies $\sup_{M} ( | h |^{2} - \alpha ( 4, | H |
,c ) ) <0$ and $| H |^{2} >-16c$. This pinching condition implies
$M$ is a closed submanifold satisfying $| h |^{2} <4c+ \frac{1}{2} |
H |^{2}$. From Theorem 4.1 of {\cite{MR2550209}}, we get that $M$
has positive isotropic curvature and is diffeomorphic to the
standard 4-sphere.

When $n=3$, let $M$ be a 3-dimensional oriented complete submanifold
immersed in $\mathbb{H}^{3+q} ( c )$. Suppose $M$ satisfies $| h
|^{2} <9c+ \frac{3}{4} | H |^{2} - \frac{1}{4} \sqrt{| H |^{4} +24c
| H |^{2}}$ and $| H |^{2} >-27c$. Then $M$ is diffeomorphic to a
spherical space form. This proposition is the 3-dimensional case of
Theorem 1.2 of {\cite{MR2437072}}.

For the case $c>0$, the following conjecture proposed by
Liu-Xu-Ye-Zhao {\cite{liu2011extension}} is still open up to now.

\begin{conjii}
  Let $M_{0}$ be a complete submanifold immersed in a sphere
  $\mathbb{S}^{n+q} ( 1/ \sqrt{c} )$. Suppose that $\sup_{M_{0}} (
  | h |^{2} - \alpha ( n, | H | ,c ) ) <0$. Then the mean curvature flow with
  initial value $M_{0}$ converges to a round point in finite time, or
  converges to a totally geodesic sphere as $t \rightarrow \infty$. In
  particular, $M_{0}$ is diffeomorphic to the standard $n$-sphere
  $\mathbb{S}^{n}$.
\end{conjii}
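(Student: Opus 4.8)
The plan is to transplant the entire apparatus of Sections 3--6 to the sphere, replacing the finiteness of the maximal time $T$ by a dynamical dichotomy. As in the hyperbolic case, the strict pinching $\sup_{M_{0}}(|h|^{2}-\alpha(n,|H|,c))<0$ together with Proposition 2 of \cite{MR1458750} and the algebraic identity of Lemma \ref{app}(ii) yields a uniform bound $\tmop{Ric}\ge\frac{n-1}{n}\delta_{0}>0$ on $M_{0}$; hence $M_{0}$ is compact by Bonnet--Myers, and by compactness there is $\varepsilon>0$ with $\normho^{2}<\mathring{\alpha}-\varepsilon\omega$, where now $\omega=|H|^{2}+4(n-1)c>0$ automatically. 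Preservation of this inequality along the flow should follow from essentially the computation of Theorem \ref{pinch}; indeed for $c>0$ the term $-2nc\,\normho^{2}$ in the evolution of $\normho^{2}$ works in our favour.

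The decisive difficulty is the decay estimate for $\normho$. The proof of Theorem \ref{sa0h2} uses every item of Lemma \ref{app}, and for $c>0$ several of them break down near $|H|=0$: one computes $\mathring{\alpha}'(y)\to-\infty$ as $y\to 0^{+}$ (while $\mathring{\alpha}''>0$ still holds), so $\mathring{\alpha}$ decreases on $(0,y^{*})$ for some $y^{*}=y^{*}(n,c)>0$, and item (iii) fails there since $\mathring{\alpha}(0)=nc\ge\frac{nc}{n-1}=\frac{0+n^{2}c}{n(n-1)}$ for $n\ge 2$. Consequently the auxiliary function $f_{\sigma}=\normho^{2}/\mathring{\alpha}^{1-\sigma}$ and the integral estimates driving the De Giorgi iteration must be reorganized. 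I would try to localize at a threshold $y_{0}$: on $\{|H|^{2}\ge y_{0}\}$ the inequalities for $\mathring{\alpha}$ should survive and give $\normho^{2}\le C_{0}|H|^{2(1-\sigma)}$, while on $\{|H|^{2}<y_{0}\}$ one falls back on a Simons-type energy estimate, using that the preserved pinching already forces $|h|^{2}<nc$ there and that, by Lemma \ref{evo}(i) and the Li--Li estimate of \cite{MR1161925}, the quantity $\int_{\{|H|^{2}<y_{0}\}}\normho^{2}\,\mathd\mu_{t}$ is contracting. Granting a combined bound of the form $\normho^{2}\le C_{0}\max\{|H|^{2(1-\sigma)},\rho(t)\}$ with $\rho(t)\to 0$ in the low-$|H|$ regime, the gradient estimate $|\nabla H|<\eta^{2}|H|^{2}+C(\eta)$ of Theorem \ref{dH2} should carry over through the same test function $f=|\nabla H|^{2}-\eta^{4}|H|^{4}+4B_{1}|H|^{2}\normho^{2}+4B_{1}B_{2}\normho^{2}$.

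The dichotomy is then forced by the behaviour of $|H|$. If $T<\infty$, then $\max_{M_{t}}|h|^{2}\to\infty$, hence $\max_{M_{t}}|H|\to\infty$ by the pinching, and the Ricci lower bound of Lemma \ref{ric} together with Myers' theorem gives $\tmop{diam}M_{t}\to 0$ and $|H|_{\min}/|H|_{\max}\to 1$, so rescaling to preserve volume as in \cite{liu2012mean} produces convergence to a round point. If $T=\infty$, one first rules out curvature blow-up at $t=\infty$; since the total volume is non-increasing and, by the Ricci lower bound and the Sobolev inequality, bounded below, one has $\int_{0}^{\infty}\int_{M_{t}}|H|^{2}\,\mathd\mu_{t}\,\mathd t<\infty$, so along a sequence $t_{k}$ the integral $\int_{M_{t_{k}}}|H|^{2}\,\mathd\mu_{t}\to 0$ and, with uniform curvature bounds, $M_{t_{k}}$ subconverges to a minimal limit $M_{\infty}$ satisfying $|h_{\infty}|^{2}<nc$. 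The last step, recognizing $M_{\infty}$ as totally geodesic, is the second serious obstacle: $nc$ exceeds the Simons-Chern-do Carmo-Kobayashi and Li--Li gap constants, so one needs either a sharper gap theorem for minimal submanifolds of $\mathbb{S}^{n+q}(1/\sqrt{c})$ of arbitrary codimension or a separate argument that the flow cannot be asymptotic to a proper minimal submanifold; one then upgrades subsequential to full convergence by a Simon-type argument and reads off $M\cong\mathbb{S}^{n}$ from the limit. I expect the genuine obstructions to be exactly these two: the failure of the $\mathring{\alpha}$-inequalities near $|H|=0$, which invalidates the De Giorgi estimate as written, and the rigidity needed to identify the immortal limit as totally geodesic.
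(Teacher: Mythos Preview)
The statement you are addressing is \emph{Conjecture B}, which the paper explicitly presents as an open problem: the authors write that it ``is still open up to now'' and offer no proof. There is therefore nothing in the paper to compare your proposal against; you have written an outline of attack on an unsolved conjecture, not a reconstruction of an existing argument.

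That said, your diagnosis of why the hyperbolic argument does not transplant is accurate and matches what the paper's machinery would run into. For $c>0$ the function $\mathring{\alpha}$ loses the monotonicity and comparison properties of Lemma~\ref{app}(iii)--(vi) near $|H|=0$ (indeed $\mathring{\alpha}'(y)\to-\infty$ as $y\to 0^{+}$), which breaks the evolution inequality for $f_{\sigma}=\normho^{2}/\mathring{\alpha}^{1-\sigma}$ and hence the De Giorgi iteration of Theorem~\ref{sa0h2}. And in the immortal case your proposed limit $M_{\infty}$ is minimal with $|h_{\infty}|^{2}<nc$, but $nc$ exceeds the known Simons--Chern--do Carmo--Kobayashi and Li--Li gap constants in higher codimension, so forcing $M_{\infty}$ to be totally geodesic is itself an open rigidity question. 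These are precisely the two obstructions you name, and they are the reason the paper leaves the spherical case as a conjecture rather than a theorem.
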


\end{document}